\documentclass[11pt]{amsart}

\usepackage{amsthm, amsfonts, amssymb, amscd, rotating}
\usepackage[pagebackref,colorlinks]{hyperref}
\usepackage{tikz-cd}
\usepackage{geometry}
\usepackage{marginnote}
\usepackage{mathtools}

\theoremstyle{definition}
\newtheorem{ntn}{Notation}[section]

\theoremstyle{plain}
\newtheorem{lem}[ntn]{Lemma}
\newtheorem{prp}[ntn]{Proposition}
\newtheorem{thm}[ntn]{Theorem}
\newtheorem{cor}[ntn]{Corollary}

\newtheorem{rem}[ntn]{Remark}
\newtheorem{exm}[ntn]{Example}

\numberwithin{equation}{section}

\newcommand{\N}{\mathbb{N}}
\newcommand{\z}{\mathbb{Z}}
\newcommand{\q}{\mathbb{Q}}

\newcommand{\F}{\mathbb{F}}

\newcommand{\BB}{\mathcal{B}}
\newcommand{\GG}{\mathcal{G}}
\newcommand{\RR}{\mathcal{R}}
\newcommand{\WW}{\mathcal{W}}
\newcommand{\II}{\mathcal{I}}
\newcommand{\PP}{\mathcal{P}}
\newcommand{\RP}{\mathcal{RP}}
\newcommand{\RB}{\mathcal{RB}}
\renewcommand{\SS}{\mathcal{S}}

\newcommand{\mmm}{\mathfrak{m}}

\renewcommand{\aa}{{A^\times}}
\newcommand{\tors}{{{\rm Tor}_1^{\z}}}

\newcommand{\half}{{\left[\frac{1}{2}\right]}}

\newcommand{\mth}[1]{\left[ \frac{1}{#1} \right]}

\newcommand{\lan}{\langle}
\newcommand{\ran}{\rangle}

\newcommand{\arr}{\rightarrow}
\newcommand{\larr}{\longrightarrow}

\newcommand{\Lan}{\langle \! \langle}
\newcommand{\Ran}{\rangle \! \rangle}

\newcommand{\GE}{{\rm GE}}

\newcommand{\Eb}{\mathbb{E}}
\newcommand{\Ee}{\mathrm{E}}
\newcommand{\Ind}{{\rm Ind}}

\newcommand{\spcm}{{\rm Specm}}
\newcommand{\ffrac}{{\rm Frac}}
\newcommand{\GL}{\mathit{{\rm GL}}}
\newcommand{\SL}{\mathit{{\rm SL}}}
\newcommand{\GM}{\mathit{{\rm GM}}}
\newcommand{\SM}{\mathit{{\rm SM}}}

\newcommand{\id}{{\rm id}}
\newcommand{\im}{{\rm im}}
\newcommand{\inc}{{\rm inc}}
\newcommand{\ind}{{\rm ind}}

\renewcommand{\char}{{\rm char}}
\newcommand{\coker}{{\rm coker}}

\newcommand {\mtx}[2]
{\left(\!\!\!
\begin{array}{cc}
#1 \\
#2 
\end{array}
\!\!\!\right)}

\newcommand{\mtxx}[4]
{\left(\!\!
\begin{array}{cc}
\!\!#1 & \!\!#2\\
\!\!#3 & \!\!#4
\end{array}\!\!
\right)}

\begin{document}
\title{Remarks on refined scissors congruence group and the third homology of \texorpdfstring{$\SL_2$}{Lg}}
\author{Elvis Torres P\'erez}
\address{\sf 
Department of Sciences, Pontifical Catholic University of Peru (PUCP), 
Lima, Peru}
\email{etorresp@pucp.edu.pe}

\begin{abstract}
This work revisits the key sufficient conditions on a ring $A$ that guarantee the existence of the exact sequence
\[
    H_3(\SM_2(A),\z)\to H_3(\SL_2(A),\z)\to\RB(A)\to 0
\]
and of the isomorphism 
\[
    H_3(\SL_2(A),\SM_2(A);\z)\simeq\RP_1(A).
\]
We show that there are rings which satisfy the relations above but do not satisfy the condition $-1\in\aa^2$, for example some rings $\z\mth{m}$ and finite local rings. In addition, we study the special case of non-dyadic local fields with characteristic not 2, obtaining a refined Bloch-Wigner exact sequence when $-1\notin\aa^2$.
\end{abstract}
\maketitle
\section{Introduction}
In \cite{suslin1991}, Andrei A. Suslin states and proves a Bloch-Wigner exact sequence for any infinite field $F$: 
\begin{equation}\label{sus-BW-seq}
    0\to\tors(\mu(F),\mu(F))^\sim \to K^\ind_3(F)\to \BB(F)\to 0.    
\end{equation}

Here $\BB(F)$ is the so-called Bloch group, which (in Suslin's words) describes the non-trivial relations among the tensors $x\otimes(1-x)$ with $x\neq 0,1$. It was defined by S. Bloch in \cite{bloch2000} as a subgroup of $\PP(F)$, called the {\it pre-Bloch group} or {\it scissors congruence group of $F$}; Bloch showed that it can be seen as an approximation to $K_3^\ind(F)$. The second name of $\PP(F)$ was inspired by the work of J. L. Dupont and C. Sah on Hilbert's third problem in hyperbolic geometry \cite{dupont-sah1982}.

The group $K^\ind_3(F)$ is the {\it indecomposable part of} $K_3(F)$, and it is known that if $F$ is algebraically closed, then $K^\ind_3(F)$ coincides with the group $H_3(\SL_2(F),\z)$. The group $\tors(\mu(F),\mu(F))^\sim$ is the unique non-trivial extension of $\tors(\mu(F),\mu(F))$ by $\z/2$. 

As an intermediate result toward his Bloch-Wigner exact sequence, Suslin obtains the exact sequence
\begin{equation}\label{gm2-gl2-seq}
    H_3(\GM_2(F),\z)\to H_3(\GL_2(F),\z)\to \BB(F)\to 0, 
\end{equation}
where $\GM_2(F)$ is
\[
    \GM_2(F):=\left\{\mtxx{a}{0}{0}{b},\mtxx{0}{c}{d}{0}:a,b,c,d\in F^\times\right\}.   
\]
 
 In 2013, with the aim of understanding the relation between $H_3(\SL_2(F),\z)$ and $K_3^\ind(F)$ over any field, Kevin Hutchinson \cite{hutchinson-2013} defined the refined Bloch group $\RB(F)$ as a subgroup of $\RP_1(F)$, the so-called {\it refined scissors congruence group}, and constructed a refined Bloch-Wigner complex (for infinite fields)
\[
    0\to \tors(\mu(F),\mu(F))\to H_3(\SL_2(F),\z)\to\RB(F)\to 0
\]
which is exact at the outer terms and whose homology at the middle term is annihilated by $4$.
Later this result was generalized to local rings with sufficiently large residue field in \cite{hutchinson2017}. 

In \cite{B-E--2023}, while studying the above complex, B. Mirzaii and the author obtained an $\SL_2$-version of 
Suslin's exact sequence \eqref{gm2-gl2-seq}: If $A$ is a local domain with infinite residue field such that
$-1$ is a square, then the sequence
\begin{equation}\label{sm2-sl2-seq}
    H_3(\SM_2(A),\z)\to H_3(\SL_2(A),\z)\to\RB(A)\to 0
\end{equation}
is exact.

In \cite[Remark 6.7]{B-E--2023} we suggested that the condition $-1\in\aa^2$ is in fact not essential.
The present paper studies in more depth some of the key conditions implying \eqref{sm2-sl2-seq}, as well as other results of \cite{B-E--2023}, such as
a homological description of $\RP_1(A)$:
\begin{equation}\label{sl2-sm2-rel}
\RP_1(A)\simeq H_3(\SL_2(A),\SM_2(A);\z).
\end{equation}

The first main theorem (Theorem \ref{thm-main-1}) gives the exact sequence 
\[
    H_3(\SM_2(A),\z)\to H_3(\SL_2(A),\z)\to\frac{\RB(A)}{\II_A\psi_1(-1)}\to 0
\]
for universal $\GE_2$-rings that satisfy the conditions
\begin{enumerate}
    \item $\mu_2(A)$=$\{\pm 1\}$.
    \item $\SS_3=0$,
\end{enumerate}
where in general $\SS_n=H_n(B(A),T(A);\z)$, together with the key condition (for rings) $\im(\gamma)\subseteq\im(\alpha)$, where $\alpha$ and $\gamma$ are the maps defined in Section \ref{sec-sseq-sm2-sl2} (see diagram \eqref{d-gam-seq}). Adding the other inclusion $\im(\alpha)\subseteq\im(\gamma)$ gives the exact sequence \ref{sm2-sl2-seq}. In this version the condition $\SS_2=0$ is not necessary; this gives a slight generalization of \cite[Theorem 6.6]{B-E--2023} when $-1\in\aa^2$. All the examples presented satisfy the equality $\im(\gamma)=\im(\alpha)$; thus, the existence of universal $\GE_2$-rings satisfying (1), (2), and $\im(\gamma)\subsetneq\im(\alpha)$ remains an open problem. 

In the case of domains, the relationship between these inclusions is very close: the inclusion $\im(\gamma)\subseteq\im(\alpha)$ implies the other, while the reciprocal implication requires an additional key condition for domains, namely $\II^2\otimes\mu_2(A)\subseteq\ker(\gamma)$ (see Lemma \ref{thm-main}(4)). This last condition leads to the second main theorem (Theorem \ref{prp-sm2-sl2-dom}), which gives the exact sequence \ref{sm2-sl2-seq} for universal $\GE_2$-domains that satisfy
\begin{enumerate}
    \item $\SS_2$ and $\SS_3$ are trivial.
    \item $-1\notin\aa^2$.
    \item $\II^2\otimes\mu_2(A)\subseteq\ker\gamma$.
\end{enumerate}

Corollary \ref{sm-sl-zm} gives many examples of rings $\z\mth{m}$ where $m=2\cdot 3\cdot p^{t_1}_1\cdots p^{t_r}_r$ that satisfy \eqref{sm2-sl2-seq}. The finiteness of these examples remains an open problem.

In the last section we deal with non-dyadic (non-archimedean) local fields and establish the exact sequence \eqref{sm2-sl2-seq} when $-1$ is not a square. Combining this result with the one obtained in \cite[Theorem 6.8]{B-E--2023} (when $-1$ is a square) we obtain that if
$F$ is a non-dyadic local field with $\char(F)\neq 2$, then the exact sequence \eqref{sm2-sl2-seq} holds. This concludes the discussion of these fields. 

In the case when $-1\notin\aa^2$, we obtain more: by determining the complete structure of $H_3(\SM_2(F),\z)$, we obtain, as our third main theorem (Theorem \ref{thm-sm2-sl2-g}), the refined Bloch-Wigner exact sequence
\[
    0\to\mu(F)^\sim\to H_3(\SL_2(F),\z)\to \RB(F)\to 0.
\]

Moreover, as a fourth main theorem (Theorem \ref{thm-sm2-sl2-g}), we can extend this reasoning to universal $\GE_2$-domains with $|\GG_A|= 4$ where $-1$ is not a square, for example $\z\half$.

{\bf Notation.} In this article all rings are commutative (with the possible exception of group rings) and have a unit element $1$. For a ring $A$, $\aa$ denotes the group of invertible elements of $A$. 
Let $\WW_A$ denote the set of all $a\in \aa$ such that $1-a\in \aa$; explicitly,
\[
    \WW_A:=\{a\in A: a(1-a)\in \aa\}.
\]
Let $\GG_A:=\aa/(\aa)^2$ and $\RR_A:=\z[\GG_A]$. The element of $\GG_A$ represented by 
$a \in \aa$ is denoted by $\lan a \ran$. We set $\Lan a\Ran:=\lan a\ran -1\in \RR_A$. Finally, let $\epsilon:\RR_A\to\z$ be the homomorphism defined by $\lan a\ran\mapsto1$ for every $a\in\aa$, and let $\II_A$ denote its kernel.\\
~\\
\section{\texorpdfstring{$\GE_2$}{GE2} rings, unimodular vectors and scissors congruence}
Let $A$ be a ring. The matrices
\[
    E(a):=\mtxx{a}{1}{-1}{0}
\]
generate a subgroup of $\SL_2(A)$ which coincides with the subgroup $\Ee_2(A)$ generated by the elementary matrices
\[
    E_{12}(a):=\mtxx{1}{a}{0}{1},\quad \quad E_{21}(a):=\mtxx{1}{0}{a}{1}, 
\]
as one sees from the formulas
\[
    E_{12}(a)=E(-a)E(0)^{-1}, \quad E_{21}(a)=E(0)^{-1}E(a).
\]

The subgroup $\Ee_2(A)$, together with the subgroup $D_2$ generated by the diagonal matrices in $\GL_2(A)$, generates the subgroup $\GE_2(A)\subseteq \GL_2(A)$. It is known that if $A$ is a field or a Euclidean domain, then $\GE_2(A)=\GL_2(A)$; rings with this property are called $\GE_2$-rings. It is not difficult to show that, for any ring, $\Ee_2(A)=\SL_2(A)$ if and only if $A$ is a $\GE_2$-ring. In \cite{cohn1966}, Cohn lists the following relations among the matrices $E(a)$:
\begin{enumerate}
    \item For any $u,v\in\aa$, $D(uv)=D(u)D(v)$, where $D(a)=E(-a)E(-a^{-1})E(-a)$ for $a\in\aa$.
    \item For any $a,b\in A$, $E(a)E(0)E(b)=-D(-1)E(a+b)$.
    \item For any $u\in\aa$ and $a\in A$, $D(u)E(a)D(u)=E(u^2a)$.
\end{enumerate}

He also observed that for many rings these relations are {\it universal}, meaning that every relation among the matrices of $\Ee_2(A)$ is a consequence of Cohn's relations; such rings are called {\it universal for $\GE_2$}. A $\GE_2$-ring which is universal for $\GE_2$ is called a {\it universal $\GE_2$-ring}; see \cite[\S 4]{hutchinson2022} for a more explicit definition of these notions.

Let $A$ be a ring. A (column) vector $\pmb{u}=\mtx{u_1}{u_2}\in A^2$ is said to be \textit{unimodular} if there exists a vector $\pmb{v}=\mtx{v_1}{v_2}$ such that the matrix $(\pmb{u},\pmb{v}):=\mtxx{u_1}{v_1}{u_2}{v_2}$ lies in $\GL_2(A)$. Consider the free abelian group $X_k(A^2)$ generated by the natural $\GL_2(A)$-set (with the diagonal action)
\[
\{(\lan\pmb{v_0}\ran,\lan\pmb{v_1}\ran,\cdots,\lan\pmb{v_k}\ran): \pmb{v_i} \text{ is unimodular and } (\pmb{v_i},\pmb{v_j})\in \GL_2(A) \text{ for } i\neq j\}
\]
where $\lan \pmb{v}\ran$ denotes the line generated by the vector $\pmb{v}$. Among these lines we single out $\pmb{\infty}=\left\lan\mtx{1}{0}\right\ran$ and $\pmb{0}=\left\lan\mtx{0}{1}\right\ran$, as well as the line $\pmb{a}=\left\lan\mtx{1}{a}\right\ran$ for $a\neq 0$. In this way $X_k(A^2)$ is a left $\GL_2(A)$-module (and hence an $\SL_2(A)$-module), which can be turned into a right module by setting $m\cdot g:=g^{-1}\cdot m$ for $m\in X_k(A^2)$ and $g\in\GL_2(A)$.

For $k\geq 1$, consider the $k$-th differential
\[
\partial_k:X_k(A^2)\to X_{k-1}(A^2)
\]
defined by
\[
\partial_k((\lan\pmb{v_0}\ran,\lan\pmb{v_1}\ran,\cdots,\lan\pmb{v_k}\ran))=\sum^k_{i=0}(-1)^i(\lan\pmb{v_0}\ran,\lan\pmb{v_1}\ran,\cdots,\hat{\lan\pmb{v_i}\ran},\cdots,\lan\pmb{v_k}\ran)
\]
where $\hat{\lan\pmb{v_i}\ran}$ indicates that the $i$-th component is omitted. Let $\partial_0:X_0(A^2)\to\z$ be defined by $\lan\pmb{v_0}\ran\mapsto 1$. We thus obtain the complex
\[
\begin{tikzcd}
X_{\bullet}(A^2)\to\z:& \cdots \ar[r] & X_2(A^2)\ar[r,"\partial_2"] & X_1(A^2)\ar[r,"\partial_1"] & X_0(A^2) \ar[r,"\partial_0"] & \z \ar[r] & 0.
\end{tikzcd}
\]
We shall say that the above complex is \textit{exact in dimension} $<k$ if the complex
\[
\begin{tikzcd}
    X_k(A^2)\ar[r,"\partial_k"] & X_{k-1}(A^2)\ar[r,"\partial_{k-1}"] & \cdots \ar[r,"\partial_2"] & X_1(A^2)\ar[r,"\partial_1"] & X_0(A^2) \ar[r,"\partial_0"] & \z \ar[r] & 0
\end{tikzcd}
\]
is exact. For instance, for local rings (or fields), the exactness of $X_{\bullet}(A^2)\to\z$ is governed by the number of elements of the residue field.

In \cite{hutchinson2022}, Hutchinson studies the complex of unimodular vectors as the clique complex of a graph $\Gamma(A)$ whose vertices are the unimodular rows and in which an edge joins $\lan\pmb{u}\ran$ and $\lan\pmb{v}\ran$ whenever $[\pmb{u},\pmb{v}]\in \GL_2(A)$. In this way he obtains a description of the homology groups $H_i(X_\bullet(A^2))$ for $i=0,1$. 
\begin{thm}[Hutchinson]
    For any commutative ring $A$, we have the isomorphism
    \[
        H_0(X_\bullet(A^2))\simeq\z[\SL_2(A)/\Ee_2(A)]
    \]
\end{thm}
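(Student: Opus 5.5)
The plan is to identify $H_0(X_\bullet(A^2))$ with the free abelian group on the connected components of the graph $\Gamma(A)$. We then show that $\SL_2(A)$ acts transitively on the vertices, and that the stabilizer of the component containing $\pmb{\infty}$ is exactly $\Ee_2(A)$.

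First, $H_0(X_\bullet(A^2))=X_0(A^2)/\im(\partial_1)$. Here $\partial_1(\lan\pmb{u}\ran,\lan\pmb{v}\ran)=\lan\pmb{v}\ran-\lan\pmb{u}\ran$, and such a pair is a generator of $X_1$ precisely when $\lan\pmb{u}\ran,\lan\pmb{v}\ran$ are joined by an edge. Hence $H_0\simeq\z[\pi_0(\Gamma(A))]$, by the standard argument for $1$-dimensional complexes.

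Next, $\SL_2(A)$ acts on the vertex set transitively. Given a unimodular $\pmb{u}$, choose $\pmb{v}$ with $(\pmb{u},\pmb{v})\in\GL_2(A)$ of determinant $d$, and replace $\pmb{v}$ by $d^{-1}\pmb{v}$. The resulting matrix $g$ lies in $\SL_2(A)$ and satisfies $g\pmb{\infty}=\lan\pmb{u}\ran$. Since $\SL_2(A)$ acts by graph automorphisms (because $\det(g\pmb{u},g\pmb{v})=\det(\pmb{u},\pmb{v})$), it permutes components transitively. Let $C_0$ be the component of $\pmb{\infty}$ and $H=\{g\in\SL_2(A): gC_0=C_0\}$. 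Then $\pi_0(\Gamma(A))\simeq\SL_2(A)/H$ as $\SL_2(A)$-sets, and it remains to prove $H=\Ee_2(A)$.

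For $\Ee_2(A)\subseteq H$, it suffices to check the generators $E(a)$. We have $E(a)\pmb{\infty}=\lan(a,-1)^t\ran$, and this line is adjacent to $\pmb{\infty}$ because $\det\mtxx{1}{a}{0}{-1}=-1$. Since $E(a)C_0$ is the component of $E(a)\pmb{\infty}$, it follows that $E(a)C_0=C_0$.

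For $H\subseteq\Ee_2(A)$, which is the main step, I first show that every vertex of $C_0$ lies in $\Ee_2(A)\pmb{\infty}$, by induction along paths. Suppose $\lan\pmb{u}\ran=g\pmb{\infty}$ with $g\in\Ee_2(A)$ and $\lan\pmb{w}\ran$ is adjacent to $\lan\pmb{u}\ran$. Then $g^{-1}\lan\pmb{w}\ran$ is adjacent to $\pmb{\infty}$, so $g^{-1}\lan\pmb{w}\ran=\lan(b,c)^t\ran$ with $c\in\aa$. Rescaling, this line equals $\lan(b',1)^t\ran=E_{12}(b')\pmb{0}$, and $\pmb{0}=E(0)\pmb{\infty}$. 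Hence $\lan\pmb{w}\ran\in\Ee_2(A)\pmb{\infty}$.

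Now take $h\in H$. Then $h\pmb{\infty}\in C_0$, so $h\pmb{\infty}=g\pmb{\infty}$ for some $g\in\Ee_2(A)$. Thus $g^{-1}h$ fixes the line $\pmb{\infty}$, so it is upper triangular of the form $\mtxx{u}{b}{0}{u^{-1}}$. Such a matrix factors as $D(u)E_{12}(b')$ for some $b'$, with $D(u)\in\Ee_2(A)$ by Cohn's formula. Hence $h\in\Ee_2(A)$. This last step is the one needing care: one must check that diagonal matrices of determinant $1$ lie in $\Ee_2(A)$, which the explicit product $D(u)=E(-u)E(-u^{-1})E(-u)$ provides.

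Finally, the identification $\pi_0(\Gamma(A))\simeq\SL_2(A)/\Ee_2(A)$ gives the isomorphism $H_0(X_\bullet(A^2))\simeq\z[\SL_2(A)/\Ee_2(A)]$.
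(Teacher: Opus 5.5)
Your proof is correct and complete. The paper gives no argument of its own here; it only cites Theorem 2.3 of a companion paper. Your route is the natural one suggested by the paper's description of $X_\bullet(A^2)$ as the clique complex of Hutchinson's graph $\Gamma(A)$:
\begin{itemize}
\item identify $H_0(X_\bullet(A^2))$ with $\z[\pi_0(\Gamma(A))]$;
\item use that $\SL_2(A)$ acts transitively on vertices;
\item show that the stabilizer of the component of $\pmb{\infty}$ is exactly $\Ee_2(A)$, using $D(u)=\mathrm{diag}(u,u^{-1})\in\Ee_2(A)$.
\end{itemize}
Because you work with $\SL_2(A)$-sets of left cosets, your argument never needs $\Ee_2(A)$ to be normal in $\SL_2(A)$. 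That is the correct way to read $\z[\SL_2(A)/\Ee_2(A)]$ in this statement.
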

\begin{proof}
    See \cite[Theorem 2.3]{B-E--2026}.
\end{proof}
Let $K_2(2,A)$ be the rank one $K_2$-group and let $C(2,A)$ be the subgroup of $K_2(2,A)$ generated by certain symbols $c(u,v)$. It is known that $A$ is universal for $\GE_2$ if and only if $K_2(2,A)$ is generated by these symbols; see \cite[Appendix A]{hutchinson2022} for a complete proof. In this setting, Hutchinson describes $H_1(X_\bullet(A^2))$ as follows.
\begin{thm}[Hutchinson]
    For any commutative ring $A$, we have the isomorphism
    \[
        H_1(X_\bullet(A^2))\simeq \Ind^{\SL_2(A)}_{\Ee_2(A)}\left(\frac{K_2(2,A)}{C(2,A)}\right)^{\textrm{ab}}
    \]
\end{thm}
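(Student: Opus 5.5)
The plan is to regard $X_\bullet(A^2)$ as the simplicial chain complex of the clique complex of the graph $\Gamma(A)$. Its vertices are the unimodular lines, and $\lan\pmb u\ran$, $\lan\pmb v\ran$ are joined when $(\pmb u,\pmb v)\in\GL_2(A)$. Then $H_1(X_\bullet(A^2))$ is the abelianized fundamental group of this $2$-dimensional complex, taken separately on each connected component. By the previous theorem, $H_0(X_\bullet(A^2))\simeq\z[\SL_2(A)/\Ee_2(A)]$, so the components are indexed by $\SL_2(A)/\Ee_2(A)$. The group $\SL_2(A)$ permutes the components transitively, and the stabilizer of the component $\Gamma_0$ containing $\pmb\infty$ is $\Ee_2(A)$. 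Hence
\[
H_1(X_\bullet(A^2))\simeq \bigoplus_{g\in \SL_2(A)/\Ee_2(A)} g\cdot H_1(\Gamma_0)=\Ind^{\SL_2(A)}_{\Ee_2(A)} H_1(\Gamma_0),
\]
and everything reduces to proving $\pi_1(\Gamma_0,\pmb\infty)\simeq K_2(2,A)/C(2,A)$. Abelianizing this isomorphism gives the statement.

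To compute $\pi_1$, I will build a simply connected covering of the $2$-complex $\Gamma_0$. Let $\mathrm{St}(2,A)$ be the rank one Steinberg group. It surjects onto $\Ee_2(A)$ with kernel $K_2(2,A)$. Let $\widetilde{\Ee}:=\mathrm{St}(2,A)/C(2,A)$; this makes sense because $C(2,A)$ is central, hence normal. Cohn's relations (1)–(3) lift to $\widetilde{\Ee}$: symbols of the shape $c(u,v)$ are exactly what measure the failure of $D(uv)=D(u)D(v)$ in $\mathrm{St}$, and relations (2) and (3) already hold in $\mathrm{St}(2,A)$. I define a $2$-complex $\widetilde\Gamma$ as follows.
\begin{enumerate}
\item The vertices are the cosets $\widetilde{\Ee}/\widetilde B$, where $\widetilde B$ is the preimage of the stabilizer of $\pmb\infty$ in $\Ee_2(A)$, modulo the part of $K_2(2,A)$ not in $C(2,A)$.
\item Edges and triangles are lifted from those of $\Gamma_0$ through the elements $E(a)$, which act transitively on edges out of $\pmb\infty$.
\end{enumerate}
Equivalently, I apply the standard presentation of a group acting on a simply connected complex (Brown's theorem / Bass–Serre-type) in reverse. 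The action of $\Ee_2(A)$ on $\Gamma_0$ is transitive on vertices, on oriented edges and on triangles, and the stabilizers are the upper triangular, monomial-type and finite-order subgroups. Brown's theorem then gives an exact sequence
\[
1\to\pi_1(\Gamma_0)\to \mathcal{G}\to \Ee_2(A)\to 1,
\]
where $\mathcal G$ is the amalgam of the stabilizers subject to the relations coming from edges and triangles. The key computation is to identify $\mathcal G$ with the group given by generators $E(a)$ and Cohn's relations (1)–(3). The triangle $(\pmb\infty,\pmb0,\pmb a)$ produces relation (2), stabilizer conjugation produces (3), and the diagonal stabilizer produces (1).

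Next I compare the Cohn-presented group with the Steinberg group. The Cohn-presented group is $\mathrm{St}(2,A)/C(2,A)$; this is the content of the appendix of \cite{hutchinson2022} relating universality for $\GE_2$ to generation of $K_2(2,A)$ by the symbols $c(u,v)$. The kernel of its map to $\Ee_2(A)$ is therefore $K_2(2,A)/C(2,A)$, which is central and in particular abelian. This gives $\pi_1(\Gamma_0)\simeq K_2(2,A)/C(2,A)$, and after abelianizing and inducing we obtain the theorem. The abelianization superscript is harmless here, since the group is already abelian.

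The main obstacle is the middle step: verifying that the stabilizer and edge/triangle data of the action of $\Ee_2(A)$ on $\Gamma_0$ yields exactly Cohn's relations, and no more. In particular one must check that the clique complex has no further $2$-cells up to homotopy. I would first check that every triangle is an $\Ee_2(A)$-translate of some $(\pmb\infty,\pmb0,\pmb a)$ with $a\in\aa$, and that its boundary word equals the Cohn relation (2).
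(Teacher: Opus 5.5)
The paper does not prove this statement; it only cites Hutchinson's theorem through \cite[Theorem 2.5]{B-E--2026}. Your architecture is a viable route: reduce to the component $\Gamma_0$ of $\pmb\infty$ by the $H_0$ theorem, realize $\pi_1(\Gamma_0)$ as the kernel of Brown's group (your $\mathcal G$) over $\Ee_2(A)$, identify Brown's presentation with Cohn's, and then Cohn's group with $\mathrm{St}(2,A)/C(2,A)$. The reduction to $\Gamma_0$ is fine. But the step that carries all the content, that Brown's group is Cohn's group, is only announced, and several things you say about it are wrong.
\begin{itemize}
\item $\Ee_2(A)$ is not transitive on triangles. Once two vertices are moved to $\pmb\infty,\pmb 0$, the third is $\pmb a$ with $a\in\aa$, and $T(A)$ changes $a$ only by squares. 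So ordered triangles form $|\GG_A|$ orbits (the computation behind $X_2(A^2)_{\SL_2(A)}\simeq\RR_A$), and each orbit contributes a relation.
\item The element $w$ inverts the edge $\{\pmb\infty,\pmb 0\}$, so you need Brown's theorem with inversions (or a subdivision). The stabilizer of the oriented edge is $T(A)$; $\SM_2(A)$ stabilizes the unoriented edge.
\item Your matching of cells to Cohn's relations is off. Since $E(0)\pmb\infty=\pmb 0$ and $E(a)\pmb 0=\pmb\infty$, the word $E(a)E(0)E(b)E(a+b)^{-1}$ traces a backtracking loop. So relation (2) comes from multiplication in the vertex stabilizer $B(A)$ together with $w^2=-1$, not from a triangle.
\item It is $E(-a)E(-a^{-1})E(-a)$ that goes around the triangle $\pmb\infty,\pmb{a^{-1}},\pmb 0$. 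The triangle relations therefore identify Cohn's $D(a)$ with the diagonal element of $B(A)$, and relation (1) follows from these plus multiplication in $B(A)$.
\end{itemize}
The actual proof consists of deriving every relation of the abstract group $B(A)$, the edge relations and the triangle relations from (1)--(3), and conversely. None of this is carried out.

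Your covering construction should be dropped. ``Modulo the part of $K_2(2,A)$ not in $C(2,A)$'' does not describe a subgroup. If $\widetilde B$ is the full preimage of $B(A)$, then $\widetilde{\Ee}/\widetilde B=\Ee_2(A)/B(A)$, which is just $\Gamma_0$ again. A genuine cover needs a lift of $B(A)$, and proving the resulting complex simply connected is the whole theorem.

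Your claim that $K_2(2,A)/C(2,A)$ is central, which you use to call the superscript ab harmless, is unjustified. Only $C(2,A)$ is automatically central: each $c(u,v)$ conjugates $x_{12}(b)$ to $x_{12}(u^2v^2(uv)^{-2}b)=x_{12}(b)$, and similarly for $x_{21}$. The centrality theorem for $K_2(n,A)$ in $\mathrm{St}(n,A)$ needs $n\ge 3$ and can fail for $n=2$. That is exactly why the statement carries ab. In general $H_1(\Gamma_0)=\pi_1(\Gamma_0)^{\mathrm{ab}}$, and $\Ee_2(A)$ acts on it through conjugation in $\mathrm{St}(2,A)/C(2,A)$, not necessarily trivially. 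Your final formula survives only because you abelianize anyway. Drop the centrality claim and keep track of this action, since it defines the $\Ee_2(A)$-module being induced.
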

\begin{proof}
    See \cite[Theorem 2.5]{B-E--2026}.
\end{proof}
The theorems above imply that if $A$ is a universal $\GE_2$-ring, then $X_\bullet(A^2)\to\z$ is exact in dimension $<2$.
\begin{exm}
    \label{ex-ge2}
    \begin{enumerate}
        \item \cite[\S 4]{cohn1966} Local rings (fields) are universal $\GE_2$-rings.
        \item \cite[Example 4.7]{C-H2022} $\z$ is a Euclidean domain and $K_2(2,\z)$ is generated by $c(-1,-1)$; hence $\z$ is a universal $\GE_2$-ring.
        \item Let $d$ be a squarefree positive integer and let $\mathcal{O}_d$ be the ring of integers of $\q[\sqrt{-d}]$. Then $\mathcal{O}_d$ is not a $\GE_2$-ring unless $d=1,2,3,7,11$ \cite[Theorem 6.1]{cohn1966}, and only in the cases $d=1,3$ is it universal for $\GE_2$ \cite[Remarks]{cohn1968}.
        \item \cite[Example 4.8 (Morita)]{C-H2022} The Euclidean domains $\z\left[\frac{1}{m}\right]$, where $m=p_1^{\alpha_1}\cdots p_r^{\alpha_r}$ and $(\z/p_i)^\times$ is generated by $\{-1,p_1,\dots,p_{i-1}\}$ for all $i\leq r$. In particular, $\z\half$, $\z\left[\frac{1}{10}\right]$ and $\z\left[\frac{1}{290}\right]$ are universal $\GE_2$-rings.
        \item Let $A$ be a semilocal ring, and let $J(A)$ be the Jacobson radical of $A$. If $A/J(A)$ does not contain $\z/2\times\z/2$ or $\z/6$ as a direct factor, then $A$ is a universal $\GE_2$-ring \cite[p. 1165]{B-E-2025-I}. In particular $\z/m$ is a universal $\GE_2$-ring if $6\nmid m$.
    \end{enumerate}
\end{exm}

For any $k\geq 0$, consider the subgroup $Z_k(A^2):=\ker(\partial_k)$ (we shall sometimes write $X_k$ and $Z_k$ instead of $X_k(A^2)$ and $Z_k(A^2)$). Following Coronado and Hutchinson \cite{C-H2022}, we define the group $\RP(A):=H_0(\SL_2(A),Z_2(A^2))=Z_2(A^2)_{\SL_2(A)}$. The natural action of $\GL_2(A)$ on this group (by conjugation) endows $\RP(A)$ with the structure of an $\RR_A$-module (via the matrix $\mtxx{1}{0}{0}{a}$ attached to $a\in\aa$). The inclusion map $Z_2(A^2)\to X_2(A^2)$ induces a map $\lambda_1:\RP(A)\to X_2(A^2)_{\SL_2(A)}$, and since $X_2(A^2)=\bigoplus_{\lan a\ran\in\GG_A}\Ind_{\mu_2(A)}^{\SL_2(A)}\z$, Shapiro's lemma gives $X_2(A^2)_{\SL_2(A)}\simeq\RR_A$. The \textit{refined scissors congruence group} is then defined as the group $\RP_1(A):=\ker(\lambda_1:\RP(A)\to\RR_A)$. 

For $a\in\aa$, consider the element $\psi_1(a)\in\RP(A)$ represented by
\[
    (\pmb{\infty},\pmb{0},\pmb{a})+(\pmb{0},\pmb{\infty},\pmb{a})-(\pmb{\infty},\pmb{0},\pmb{1})-(\pmb{0},\pmb{\infty},\pmb{1})
\]
Coronado and Hutchinson establish the following properties of this element in \cite{C-H2022}.
\begin{lem}\label{lem-psi1}
    Let $A$ be a ring. The map $\psi_1:\aa\to\RP(A)$ satisfies the following properties:
    \begin{enumerate}
        \item $\psi_1(ab)=\lan a\ran\psi_1(b)+\psi_1(a)$.
        \item $\lan -1\ran\psi_1(a)=\psi_1(a^{-1})$.
        \item $\Lan a\Ran\psi_1(b)=\Lan b\Ran\psi_1(a)$.
        \item $\psi_1(ab^2)=\psi_1(a)+\psi_1(b^2)$.
        \item $2\psi_1(-1)=0$.
        \item $\psi_1(a^2)=\Lan a\Ran\psi_1(-1)=\Lan -1\Ran\psi_1(a)$.
        \item $2\psi_1(a^2)=0$ for all $a\in\aa$. If $-1\in\aa^2$, then $\psi_1(a^2)=0$ for all $a\in\aa$.
    \end{enumerate}
\end{lem}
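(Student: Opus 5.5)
Properties (1) and (2) are the only ones that need an actual computation in $Z_2(A^2)_{\SL_2(A)}$. The other five follow from them formally, using $\lan a^2\ran=1$ in $\GG_A$.

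For (1), I would write $\psi_1(a)=\phi(a)-\phi(1)$ with
\[
\phi(a)=(\pmb{\infty},\pmb{0},\pmb{a})+(\pmb{0},\pmb{\infty},\pmb{a}).
\]
Each $\phi(a)$ lies in $X_2$; only the differences $\phi(a)-\phi(1)$ are cycles. The reason is
\[
\partial_2\phi(a)=(\pmb{0},\pmb{a})-(\pmb{\infty},\pmb{a})+(\pmb{\infty},\pmb{a})-(\pmb{0},\pmb{a})+(\pmb{\infty},\pmb{0})+(\pmb{0},\pmb{\infty})=(\pmb{\infty},\pmb{0})+(\pmb{0},\pmb{\infty}),
\]
which does not depend on $a$. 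The action of $\lan a\ran$ is induced by $d_a=\mathrm{diag}(1,a)$, which fixes $\pmb{\infty}$ and $\pmb{0}$ and sends $\pmb{b}$ to $\pmb{ab}$. Hence $\lan a\ran(\phi(b)-\phi(1))=\phi(ab)-\phi(a)$ already at the level of chains. Adding $\phi(a)-\phi(1)$ gives (1). This step is legitimate because the $\RR_A$-action on coinvariants is induced by conjugation, and $d_a$ acting on $Z_2$ descends to $Z_2$ modulo the $\SL_2$-action.

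For (2), I would use the matrix $w=E(0)=\mtxx{0}{1}{-1}{0}\in\SL_2(A)$. It swaps $\pmb{\infty}$ and $\pmb{0}$ and sends $\pmb{a}=\lan(1,a)^t\ran$ to $\lan(a,-1)^t\ran=\pmb{-a^{-1}}$. So in coinvariants $\phi(a)\equiv\phi(-a^{-1})$ as cycle differences, which gives $\psi_1(a)=\psi_1(-a^{-1})-\psi_1(-1)$. Applying (1) yields $\psi_1(-a^{-1})=\lan -1\ran\psi_1(a^{-1})+\psi_1(-1)$, and therefore $\psi_1(a)=\lan-1\ran\psi_1(a^{-1})$. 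This is equivalent to (2) because $\lan-1\ran^2=1$.

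The remaining items follow by formal manipulation.

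\emph{(3)} Expand $\psi_1(ab)=\psi_1(ba)$ using (1) in both orders.

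\emph{(4)} By (1), $\psi_1(ab^2)=\lan b^2\ran\psi_1(a)+\psi_1(b^2)=\psi_1(a)+\psi_1(b^2)$.

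\emph{(5)} From (1), $\psi_1(1)=0$. Then (4) with $a=-1$, $b=-1$ gives $\psi_1(-1)=\psi_1(-1)+\psi_1(1)$, which is trivial, so a different route is needed. Take $a=-1$ in (2): $\lan-1\ran\psi_1(-1)=\psi_1(-1)$. Now (1) gives $0=\psi_1(1)=\lan-1\ran\psi_1(-1)+\psi_1(-1)=2\psi_1(-1)$.

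\emph{(6)} By (1), $\psi_1(a^2)=\lan a\ran\psi_1(a)+\psi_1(a)$. Since $\psi_1(1)=0$, (1) also gives $\psi_1(a^{-1})=-\lan a^{-1}\ran\psi_1(a)=-\lan a\ran\psi_1(a)$. Combining this with (2) gives $\lan a\ran\psi_1(a)=-\lan-1\ran\psi_1(a)$, so
\[
\psi_1(a^2)=\psi_1(a)-\lan-1\ran\psi_1(a)=-\Lan-1\Ran\psi_1(a).
\]
By (3) this equals $-\Lan a\Ran\psi_1(-1)$. Since $2\psi_1(-1)=0$ and $\II_A$ is an ideal, $-\Lan a\Ran\psi_1(-1)=\Lan a\Ran\psi_1(-1)$. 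The remaining sign issue is $-\Lan-1\Ran\psi_1(a)=\Lan-1\Ran\psi_1(a)$. This holds because $\Lan-1\Ran\psi_1(a)=\Lan a\Ran\psi_1(-1)$ is $2$-torsion.

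\emph{(7)} The first claim follows from (6) and (5). If $-1=c^2$, then $\lan-1\ran=1$, so $\Lan-1\Ran=0$ and (6) gives $\psi_1(a^2)=0$.

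The main obstacle is the computation in (2). I need to check that $w$ really acts by $\pmb{a}\mapsto\pmb{-a^{-1}}$ on lines: a unimodular vector $(a,-1)^t$ spans the same line as $(1,-a^{-1})^t$ only when $a\in\aa$, which holds here. I also need to confirm that the resulting sign conventions match $\psi_1(-1)$ rather than something like $\psi_1(-a)$.
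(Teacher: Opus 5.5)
Your proof is correct. The paper gives no argument of its own for this lemma: it only cites Coronado--Hutchinson \cite[Lemma 7.8 and Lemma 7.25]{C-H2022}. Your write-up is therefore a self-contained alternative to that citation.

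You isolate two chain-level symmetries of your representative $\phi(a)-\phi(1)$ in $Z_2(A^2)$:
\begin{itemize}
\item the equivariance $\mtxx{1}{0}{0}{a}\cdot\phi(b)=\phi(ab)$, which gives (1);
\item the identity $w\cdot\phi(a)=\phi(-a^{-1})$, which, combined with (1), gives (2).
\end{itemize}
You then derive (3)--(7) purely formally from (1), (2) and $\lan a^2\ran=1$ in $\GG_A$. This makes two things visible: the lemma holds for every commutative ring, with no exactness or $\GE_2$ hypothesis, and the $2$-torsion statements are formal consequences of those two symmetries. The paper's citation buys brevity and places $\psi_1$ inside Coronado--Hutchinson's broader framework.

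The two points you flagged both check out:
\begin{itemize}
\item Since $(a,-1)^t=a\,(1,-a^{-1})^t$ with $a\in\aa$, you do have $w\cdot\pmb{a}=\pmb{-a^{-1}}$.
\item Since $w\cdot\pmb{1}=\pmb{-1}$, the correction term is exactly $\psi_1(-1)$, via $\phi(-a^{-1})-\phi(-1)=(\phi(-a^{-1})-\phi(1))-(\phi(-1)-\phi(1))$.
\end{itemize}

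Two cosmetic remarks. In (5) you can delete the abortive first attempt. In (6) the relevant fact is not that $\II_A$ is an ideal. It is simply that multiplication by $\Lan a\Ran$ is additive, so it sends the $2$-torsion element $\psi_1(-1)$ to a $2$-torsion element.
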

\begin{proof}
    See \cite[Lemma 7.8 and Lemma 7.25]{C-H2022}.
\end{proof}
Consider the truncated complex $X^{\tau}_{\bullet}(A^2)\to\z$ given by $X^{\tau}_{k}(A^2)=X_{k}(A^2)$ for $k<3$, $X^{\tau}_{3}(A^2)=Z_2(A^2)$ and $X^{\tau}_{k}(A^2)=0$ for $k>3$. Tensoring this complex with a projective resolution $F_{\bullet}\to\z$ over $\RR_A$, we obtain the double complex $F_{\bullet}\otimes_{\SL_2(A)}X^{\tau}_{\bullet}$ and a spectral sequence
\[
E^1_{p,q}(\SL_2(A),X^{\tau})=\begin{cases}
    H_q(\SL_2(A),X_p(A^2)) & p<3\\
    H_q(\SL_2(A),Z_2(A^2)) & p=3\\
    0 & p>3
\end{cases}\Longrightarrow H_{p+q}(\SL_2(A),X^{\tau}_{\bullet}(A^2)).
\]
The first page of this spectral sequence is described by Coronado and Hutchinson in \cite[Lemma 3.2]{C-H2022}; its differentials, which we denote by $d^r_{p,q}(X^\tau)$, are known:
\begin{equation}
    \begin{tikzcd}
        H_3(B(A),\z) & \ar[l,"(w - \id)_*"'] H_3(T(A),\z) & * & * \\
        \aa\wedge\aa\oplus\SS_2 & \ar[l,"0"'] \aa\wedge\aa & \ar[l] 0 & \ar[l] H_2(\SL_2(A,Z_2) \\
        H_1(B(A),\z) & \ar[l,"(\cdot)^{-2}"'] \aa & \ar[l,"\epsilon\otimes\inc"'] \RR\otimes\mu_2(A) & \ar[l,"\inc_*"'] H_1(\SL_2(A),Z_2)\\
        \z &\ar[l,"0"'] \z & \ar[l,"\epsilon"'] \RR & \ar[l,"\inc_*=\lambda_1"'] \RP(A) 
    \end{tikzcd}
\end{equation}
We see that $\RP(A)=E^1_{3,0}(\SL_2(A),X^{\tau})$, and since $E^2_{1,1}(\SL_2(A),X^\tau)=0$, we have $\RP_1(A)=E^3_{3,0}(\SL_2(A),X^{\tau})$. The \textit{refined Bloch group} of $A$ is defined as $\RB(A):=\ker(\bar{d}^3_{3,0}:\RP_1(A)\to E^3_{0,2}(\SL_2(A),X^{\tau}))$. Moreover, if the complex $X^{\tau}(A^2)\to\z$ is exact, then the spectral sequence above converges to $H_{p+q}(\SL_2(A),\z)$. See \cite[\S 3]{C-H2022} for further details.
\section{A spectral sequence for \texorpdfstring{$\SM_2$}{SM2} and \texorpdfstring{$\SL_2$}{SL2}}\label{sec-sseq-sm2-sl2}
Let $A$ be a ring and consider the following map of complexes.
\begin{equation}
    \label{sm-sl-cplx}
    \begin{tikzcd}
        0\ar[r] & \hat{Z}_1(A^2)\ar[r]\ar[d,"\inc"] & \hat{X}_1(A^2)\ar[r,"\hat{\partial}_1"]\ar[d,"\inc"] & \hat{X}_0(A^2)\ar[r,"\hat{\partial}_0"]\ar[d,"\inc"] & \z\ar[d,equal]\\
        0\ar[r] & Z_1(A^2)\ar[r] & X_1(A^2)\ar[r,"\partial_1"] & X_0(A^2)\ar[r,"\partial_0"] & \z
    \end{tikzcd}
\end{equation}
where the groups in the upper row $\hat{X}_0(A^2)=\z\{(\pmb{\infty}),(\pmb{0})\}$, $\hat{X}_1(A^2)=\z\{(\pmb{\infty},\pmb{0}),(\pmb{0},\pmb{\infty})\}$ and $\hat{Z}_1(A^2)=\z\{(\pmb{\infty},\pmb{0})+(\pmb{0},\pmb{\infty})\}\simeq\z$ are modules over the group of monomial matrices
\[
\SM_2(A):=\left\{\mtxx{a}{0}{0}{a^{-1}},\mtxx{0}{b^{-1}}{-b}{0}: a,b\in\aa\right\}.
\]
and the lower row is a truncated piece of the complex $X_{\bullet}(A^2)\to\z$. It is not difficult to show that
\begin{equation}
    \label{struct-X-hatX}
    \hat{X}_0\simeq\Ind^{\SM_2(A)}_{T(A)}\z,\quad \hat{X}_1\simeq\Ind^{\SM_2(A)}_{T(A)}\z,\quad  X_0\simeq\Ind^{\SM_2(A)}_{B(A)}\z,\quad X_1\simeq\Ind^{\SM_2(A)}_{T(A)}\z    
\end{equation}
where
\[
    T(A)=\left\{\mtxx{a}{0}{0}{a^{-1}}: a\in\aa\right\},\quad B(A)=\left\{\mtxx{a}{b}{0}{a^{-1}}: a\in\aa, b\in A\right\},
\]
and by Shapiro's lemma we obtain $\hat{E}^1_{0,q}\simeq H_q(T(A),\z)$, $\hat{E}^1_{1,q}\simeq H_q(T(A),\z)$ and $\hat{E}^1_{2,q}\simeq H_q(\SM_2(A),\z)$.

Now, if $X_{\bullet}(A^2)\to\z$ is exact in dimension $<1$ (for example, when $A$ is a $\GE_2$-ring), tensoring \eqref{sm-sl-cplx} with a projective resolution yields the following map of spectral sequences.
\begin{equation}\label{map-s-seq}
    \begin{tikzcd}
    \hat{E}^1_{p,q}\ar[r,Rightarrow]\ar[d,"\inc_*"] & H_{p+q}(\SM_2(A),\z)\ar[d,"\inc_*"]\\
    E^1_{p,q}\ar[r,Rightarrow] & H_{p+q}(\SL_2(A),\z).
\end{tikzcd}
\end{equation}

Both spectral sequences were studied in \cite[\S 1 and \S 5]{B-E--2023}; note that on the $E^2$-page we have $\hat{E}^2_{2,1}\simeq\GG_A$. 

Furthermore, if the complex $X_{\bullet}(A^2)\to\z$ is exact in dimension $<2$ (for example, when $A$ is a universal $\GE_2$-ring), we obtain the short exact sequence $0\to Z_2(A^2)\to X_2(A^2) \overset{\partial_2}{\to} Z_1(A^2)\to 0$, which induces the long exact sequence
\begin{equation}\label{long-eseq}
H_2(\SL_2(A),Z_1) \to H_1(\SL_2(A),Z_2) \to H_1(\SL_2(A),X_2)\to H_1(\SL_2(A),Z_1)\overset{\delta}{\to} \cdots.
\end{equation}

Using the structure of $X_2(A^2)$ described in \eqref{struct-X-hatX}, together with Shapiro's lemma, we obtain $H_1(\SL_2(A),X_2(A^2))\simeq\RR_A\otimes\mu_2(A)$. Truncating the exact sequence above, we get
\[
\begin{tikzcd}
H_1(\SL_2(A),Z_2) \ar[r] & \RR_A\otimes\mu_2(A)\ar[r,"\gamma"] & E^1_{2,1} \ar[r,"\delta"] & \RP_1(A)\ar[r] & 0.
\end{tikzcd}
\]
where $\gamma$ is induced by $\partial_2$. As in \cite[\S 4]{B-E--2023}, the maps $d^1_{2,1}:E^1_{2,1}\to\mu_2(A)\subset E^1_{1,1}$ and $d^1_{2,1}\circ\gamma=\epsilon\otimes\id:\RR_A\otimes\mu_2(A)\to\mu_2(A)$ fit into a commutative diagram, and the snake lemma then yields
\begin{equation}
    \label{d-gam-seq}
    \begin{tikzcd}
        &\II_A\otimes\mu_2(A)\ar[r,"\gamma"] & E^2_{2,1} \ar[r,"\delta"] & \RP_1(A)\ar[r] & 0 
    \end{tikzcd}
\end{equation}
Finally, the map of spectral sequences yields the following diagram.
\begin{equation}\label{dia-gam-alpha}
\begin{tikzcd}
& \hat{E}^2_{2,1}\simeq\GG_A\ar[d,"\alpha:=\inc_*"] & &\\
\II_A\otimes\mu_2(A)\ar[r,"\gamma"] & E^2_{2,1} \ar[r,"\delta"] & \RP_1(A)\ar[r] & 0.
\end{tikzcd}
\end{equation}
\section{The element \texorpdfstring{$\chi_1$}{X1}}
From now on, the elements $(\pmb{\infty},\pmb{0},\pmb{a})$ and $(\pmb{0},\pmb{\infty},\pmb{a})$ in $X_2(A^2)$, and $(\pmb{\infty},\pmb{0})+(\pmb{0},\pmb{\infty})$ in $X_1(A^2)$, will be denoted by $X_a$, $X'_a$ and $Y$, respectively; in bar notation, the matrices $\mtxx{0}{1}{-1}{0}\in\SM_2(A)$ and $\mtxx{a}{0}{0}{a^{-1}}$, for $a\in\aa$, will be denoted by $w$ and $a$, respectively. Finally, the maps $\alpha$, $\gamma$ and $\delta$ are those of diagram \eqref{dia-gam-alpha}, and $d^2_{2,1}:E^2_{2,1}\to H_2(B(A),\z)\simeq H_2(T(A),\z)\oplus\SS_2$ (see Section \ref{sec-hb-ht} for this decomposition) is the $E^2$-differential of the spectral sequence $E^1_{p,q}$ appearing in diagram \eqref{map-s-seq}. 

Consider the element $\chi_1\in E^2_{2,1}$ represented by the cycle $[w]\otimes\partial_2(X_{-1}-X_1)$.
\begin{lem}\label{alpha-gam-chi}
    The element $\chi_1\in E^2_{2,1}$ and the maps $\alpha$, $\gamma$ and $\delta$ satisfy the following properties:
    \begin{enumerate}
        \item $\delta(\chi_1)=\psi_1(-1)$.
        \item $\chi_1=[w]\otimes Y-\gamma(\lan 1\ran\otimes (-1))$.
        \item $\gamma(\Lan a\Ran\otimes (-1))-\alpha(\lan a\ran)=\Lan a\Ran\chi_1$.
        \item $(\delta\circ\alpha)(\lan a\ran)=\psi_1(a^2)=\Lan a\Ran\psi_1(-1)$.
        \item $d^2_{2,1}\circ\alpha(\lan a\ran)=(-1\wedge a,0)$.
        \item $\gamma(\Lan -1\ran\otimes(-1))=\alpha(\lan -1\ran)$.
        \item $\Lan -1\Ran\chi_1=0$.
    \end{enumerate} 
\end{lem}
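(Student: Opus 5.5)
All seven items are computations at the level of chains in the double complex $F_\bullet\otimes_{\SL_2(A)}X_\bullet$, in bar notation. I will represent every class in $E^1_{2,1}=H_1(\SL_2(A),X_1)$ as a sum $[g]\otimes x$ with $x\in Z_1$ (equivalently $X_1$), and I will use three facts repeatedly:
\begin{enumerate}
\item $\gamma(\lan a\ran\otimes(-1))=[-1]\otimes\partial_2(X_a)$, since by Shapiro's lemma $\lan a\ran\otimes(-1)$ corresponds to the stabilizer $\mu_2=\{\pm I\}$ of the triple $X_a$;
\item the connecting map $\delta$ sends $[g]\otimes\partial_2(z)$ to the class of $g^{-1}z-z$ in $Z_2(A^2)_{\SL_2(A)}$, up to the usual sign convention;
\item $\lan a\ran$ acts through conjugation by $\mathrm{diag}(1,a)$, which sends $X_b\mapsto X_{ab}$, $X'_b\mapsto X'_{ab}$ and fixes $Y$.
\end{enumerate}

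First, (1) and (2). Note that $\partial_2(X_{-1}-X_1)=(\pmb 0,\pmb{-1})-(\pmb\infty,\pmb{-1})-(\pmb 0,\pmb 1)+(\pmb\infty,\pmb 1)$. For (1), apply the connecting homomorphism: $w^{-1}(X_{-1}-X_1)-(X_{-1}-X_1)$. Here $w$ swaps $\pmb 0,\pmb\infty$ and sends $\pmb a$ to $\pmb{-a^{-1}}$, so $w^{-1}X_{-1}=X'_1$ and $w^{-1}X_1=X'_{-1}$. The result is $X'_1-X'_{-1}-X_{-1}+X_1$, which is $-\psi_1(-1)$, and this equals $\psi_1(-1)$ by Lemma \ref{lem-psi1}(5). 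This fixes the sign issues.

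For (2), I rewrite $\partial_2 X_1=(\pmb 0,\pmb 1)-(\pmb\infty,\pmb 1)+(\pmb\infty,\pmb 0)$ and $\partial_2X_{-1}$ similarly. The terms $(\pmb\infty,\pmb 0)$ cancel, so $[w]\otimes\partial_2(X_{-1}-X_1)$ needs to be compared with $[w]\otimes Y$. Using the bar relation $[w]+[w]=[w^2]=[-1]$ modulo boundaries, together with $w\cdot\partial_2 X_1=\partial_2 X'_{-1}$, the difference becomes $[-1]\otimes\partial_2 X_1=\gamma(\lan1\ran\otimes(-1))$. This is a bookkeeping identity in $H_1$ with twisted coefficients, and I will write it out carefully. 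Item (3) then follows by applying $\lan a\ran$ to (2). The point is that $\alpha(\lan a\ran)$ is represented by $[\mathrm{diag}]$-twisted $[w_a]\otimes Y$ with $w_a=\mtxx{0}{a^{-1}}{-a}{0}$, so $\alpha(\lan a\ran)=\lan a\ran([w]\otimes Y)$. Subtracting gives $\Lan a\Ran\chi_1=\lan a\ran\gamma(\lan1\ran\otimes(-1))-\ldots$. The generator of $\hat E^2_{2,1}\simeq\GG_A$ needs identifying; I expect it to be $[w_a]\otimes Y$ modulo $[w]\otimes Y$.

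Items (4)–(7) follow quickly. For (4), apply $\delta$ to (3): since $\delta\gamma=0$, we get $\delta\alpha(\lan a\ran)=-\Lan a\Ran\psi_1(-1)$, which equals $\psi_1(a^2)$ by Lemma \ref{lem-psi1}(6) and the fact that $2\psi_1(a^2)=0$. One must also check $\delta\alpha(\lan1\ran)=0$, i.e.\ that the normalization is consistent. For (5), compute $d^2_{2,1}$ on $[w_a]\otimes Y$ by lifting through $d^1$. The zig-zag produces the Pontryagin product $[w_a|w_a]$-type terms landing in $H_2(T)=\aa\wedge\aa$; since $w_a^2=-1$ and $w_a$ conjugates $t\mapsto t^{-1}$, the class obtained is $(-1)\wedge a$, with no $\SS_2$ component because the chain lives in $T$. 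For (6), take $a=-1$ in (3) and use (7). For (7), observe that $\lan-1\ran$ acts via $\mathrm{diag}(1,-1)$, which sends $X_{-1}\leftrightarrow X_1$ and conjugates $w$ to $w^{-1}=-w$. Hence $\lan-1\ran\chi_1=[w^{-1}]\otimes\partial_2(X_1-X_{-1})=-[w^{-1}]\otimes\partial_2(X_{-1}-X_1)$, and $[w^{-1}]\otimes z=-w^{-1}\cdot([w]\otimes z)$, which gives back $\chi_1$.

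The main obstacle is (5), together with pinning down exactly how $\alpha$ is normalized on $\GG_A$. Both require explicit lifts in the double complex, and that is where the sign and convention errors will concentrate. For (5) I would first try to compute directly with the $\SM_2$ spectral sequence and push forward, since $d^2$ commutes with $\inc_*$ and everything stays in $\SM_2$.
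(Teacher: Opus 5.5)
\textbf{Verdict.} There is a genuine gap. It lies in how you identify $\alpha$, and (3), (4), (5) and your route to (6) all depend on that identification.

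\textbf{The representative of $\alpha(\lan a\ran)$.} You assert $\alpha(\lan a\ran)=\lan a\ran([w]\otimes Y)=[w_a]\otimes Y$, and this cannot be right. It would give $\alpha(\lan 1\ran)=[w]\otimes Y$, but $\alpha$ is a homomorphism on $\GG_A$, so $\alpha(\lan1\ran)$ must vanish. In fact $[w_a]\otimes Y$ is not even in $E^2_{2,1}$, because $d^1_{2,1}$ sends it to $w_a^2=-1\in\aa$. The same happens for $[w]\otimes Y$ and $\gamma(\lan1\ran\otimes(-1))$: both go to $-1$, which is why only their difference $\chi_1$ survives to $E^2$.

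The missing fact is the following. $\hat E^1_{2,1}=H_1(\SM_2(A),\z)$, with $Y$ spanning a trivial module, and $\hat d^1_{2,1}$ is the transfer to $H_1(T(A),\z)=\aa$. The transfer sends $[a]\mapsto a\cdot a^{-1}=1$ and every $[wb]\mapsto (wb)^2=-1$. Hence $\lan a\ran\in\hat E^2_{2,1}\simeq\GG_A$ is represented by $[a]\otimes Y$, built from the diagonal matrix, not from $w_a$. The bar relation $[wa]\equiv w[a]+[w]$ together with $wY=Y$ then gives $[a]\otimes Y=\Lan a\Ran([w]\otimes Y)$. Applying $\Lan a\Ran$ to (2) now yields (3) in one line, up to a sign that is irrelevant since everything is $2$-torsion. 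Your phrase ``modulo $[w]\otimes Y$'' points toward this, but as written (3) is never derived. The normalization check you defer in (4) is only an artifact of the wrong representative.

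\textbf{Item (5).} For the same reason, $d^2_{2,1}$ cannot be evaluated on $[w_a]\otimes Y$; it must be applied to $[a]\otimes Y$. The value $-1\wedge a$ has to come from an explicit zig-zag, not from the heuristic about $w_a^2=-1$. The paper lifts
\[
[a]\otimes Y=(d_2\otimes\id)\big(([w|a]-[a^{-1}|w]+[a^{-1}|a])\otimes(\pmb{\infty},\pmb{0})\big),
\]
which uses $wa=a^{-1}w$. It then applies $\id\otimes\partial_1$ to get $(w-1)\big([w|a]-[a^{-1}|w]+[a^{-1}|a]\big)\otimes(\pmb{\infty})$. Finally it subtracts an explicit $d_3$-boundary to reach $([-1|a]-[a|-1])\otimes(\pmb{\infty})$.

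Your observation that the whole computation lives in $\SM_2(A)$-chains is correct. It shows the result lies in the image of $H_2(T(A),\z)$ and so has zero $\SS_2$-component, which cleanly justifies the second coordinate. The first coordinate still needs the computation above.

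\textbf{Items (1), (2), (6), (7).} Items (1) and (2) match the paper.

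Your direct proof of (7) is a valid reversal of the paper's order. The paper proves (6) directly, via $[-1]\otimes\partial_2(X_{-1}-X_1)=[-1]\otimes Y-2[-1]\otimes\partial_2(X_1)=[-1]\otimes Y$, and then deduces (7) from (3). Your step ``which gives back $\chi_1$'' needs two facts. The first is $[w^{-1}]\equiv -w^{-1}[w]$. The second is $w\,\partial_2(X_{-1}-X_1)=\partial_2(X'_1-X'_{-1})=\partial_2(X_{-1}-X_1)$, where the last equality holds because $X_{-1}+X'_{-1}-X_1-X'_1\in Z_2(A^2)$.

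However, you then obtain (6) from (3), so it too depends on fixing $\alpha$.
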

\begin{proof}
\begin{enumerate}
    \item Take a preimage of $[w]\otimes \partial_2(X_{-1}-X_1)$ under $\id\otimes\partial_2$, namely $[w]\otimes(X_{-1}-X_1)$. Applying $d_1\otimes\id$, we obtain (recall that $\psi_1(-1)$ is $2$-torsion by Lemma \ref{lem-psi1})
    \begin{align*}
        (w[]-[])\otimes(X_{-1}-X_1)&=[]\otimes(X'_1-X'_{-1}-X_{-1}+X_1)\\
        &=-\psi_1(-1)\\
        &=\psi_1(-1).
    \end{align*}
    \item 
    \begin{align*}
        [w]\otimes\partial_2(X_{-1}-X_1)&=[w]\otimes\partial_2(X_{-1}+X'_{-1}-(X'_{-1}+X_1))\\
        &=[w]\otimes Y-[w]\otimes\partial_2(wX_1+X_1)\\
        &=[w]\otimes Y-\{w[w]+[w]\}\otimes\partial_2(X_1)\\
        &=[w]\otimes Y-[-1]\otimes\partial_2(X_1)\\
        &=[w]\otimes Y-\gamma(\lan 1\ran\otimes(-1)).
    \end{align*}
    \item It is clear that $\Lan a\Ran\chi_1$ is represented by $\Lan a\Ran([w]\otimes(X_{-1}-X_1))$; the identity above then implies
    \begin{align*}
        \Lan a\Ran([w]\otimes Y-[-1]\otimes\partial_2(X_1))&=[wa]\otimes Y - [w]\otimes Y-\gamma(\Lan a\Ran\otimes (-1))\\
        &=[a]\otimes Y-\gamma(\Lan a\Ran\otimes (-1))\\
        &=\alpha(\lan a\ran)-\gamma(\Lan a\Ran\otimes (-1)).
    \end{align*}
    \item Using item (1), we have
    \begin{align*}
        \Lan a\Ran\psi_1(-1)&=\Lan a\Ran\delta(\chi_1)\\
        &=\delta(\Lan a\Ran\chi_1)\\
        &=\delta(\alpha(\lan a\ran)-\gamma(\Lan a\Ran\otimes(-1))\\
        &=\delta(\alpha(\lan a\ran))\\
        &=\delta\circ\alpha(\lan a\ran).
    \end{align*}
    \item The element $\lan a\ran\in\GG_A$ is represented by $[a]\otimes Y\in B_2(\SL_2(A))\otimes_{\SL_2(A)} Z_1(A^2)$. We have that
    \[
    [a]\otimes Y=d_2\otimes\id(([w|a]-[a^{-1}|w]+[a^{-1}|a])\otimes(\pmb{\infty},\pmb{0}))
    \]
    Applying $\id\otimes\partial_1$, we obtain, modulo $\im(d_3\otimes\id)$,
    \[
    \begin{split}
        d^2_{2,1}(\alpha(\lan a\ran))&=\{w([w|a]-[a^{-1}|w]+[a^{-1}|a])-([w|a]-[a^{-1}|w]+[a^{-1}|a])\}\otimes(\pmb{\infty})\\
        &=([-1|a]-[a|-1])\otimes(\pmb{\infty})
    \end{split}
    \]
    where the last line is obtained by adding the trivial element
    \[
    d_3\otimes\id(\{-[w|w|a]+[w|a^{-1}|a]-[w|a^{-1}|a]-[a|w|w]+[a|w|a]-[a|a^{-1}|w]-[a^{-1}|a|a^{-1}]\}\otimes(\pmb{\infty}))
    \]
    \item The element $\gamma(\Lan -1\Ran\otimes(-1))$ is represented by $[-1]\otimes\partial_2(X_{-1}-X_1)$; hence
    \begin{align*}
        [-1]\otimes\partial_2(X_{-1}-X_1)&=[-1]\otimes\partial_2(X_{-1}+X'_{-1})-[-1]\otimes\partial_2(wX_1+X_1)\\
        &=[-1]\otimes Y - 2[-1]\otimes(X_1)\\
        &=[-1]\otimes Y
    \end{align*}
    \item This is a direct consequence of (3) and (6).
\end{enumerate}
\end{proof}
\begin{lem}\label{d2-chi}
    The image $d^2_{2,1}(\chi_1)$ is represented by the cycle $(\left[\mtxx{1}{1}{0}{1}\bigg|-1\right]-\left[-1\bigg|\mtxx{1}{1}{0}{1}\right])\otimes(\pmb{\infty})$; consequently, $d^2_{2,1}(\chi_1)\in\SS_2$.
\end{lem}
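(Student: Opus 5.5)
We compute $d^2_{2,1}(\chi_1)$ with the standard zig-zag in the double complex $F_\bullet\otimes_{\SL_2(A)}X_\bullet$, exactly as in the proof of Lemma \ref{alpha-gam-chi}(5). By Lemma \ref{alpha-gam-chi}(2) we have $\chi_1=[w]\otimes Y-\gamma(\lan1\ran\otimes(-1))$, and $[w]\otimes Y$ is the image of $\lan 1\ran$ under $\alpha$. By Lemma \ref{alpha-gam-chi}(5), $d^2_{2,1}(\alpha(\lan 1\ran))=(-1\wedge 1,0)=0$. So it suffices to compute $d^2_{2,1}$ on $\gamma(\lan1\ran\otimes(-1))$, which is represented by $[-1]\otimes\partial_2(X_1)=[-1]\otimes\big((\pmb{0},\pmb{1})-(\pmb{\infty},\pmb{1})+(\pmb{\infty},\pmb{0})\big)$.

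First step: lift to $F_2\otimes X_1$ under $d\otimes\id$. Let $E:=\mtxx{1}{1}{0}{1}$. We use a matrix in $\SL_2(A)$ sending $(\pmb{\infty},\pmb{0})$ to $(\pmb{\infty},\pmb{1})$, namely $E_{21}(1)$. We also use $E$, which fixes $\pmb{\infty}$, together with the relation between $(\pmb 0,\pmb 1)$ and $(\pmb\infty,\pmb 0)$ given by a suitable elementary product. With these, every term $[-1]\otimes(\text{edge})$ can be rewritten as $[-1]\otimes g(\pmb{\infty},\pmb{0})$, that is, as $[-1]g^{\pm}\otimes(\pmb\infty,\pmb 0)$ in the coinvariant bar notation. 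The combination then becomes a boundary $d_2\otimes\id$ of a sum of bar elements $[-1|g]-[g|-1]$-type terms tensored with $(\pmb{\infty},\pmb{0})$. The point is that $-1$ is central, so the commutator-style combination $[-1|g]-[g|-1]$ has boundary $g[-1]-[-1]$ up to the appropriate terms. Since $-1$ acts trivially on lines, the boundary is exactly the needed difference.

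Second step: apply $\id\otimes\partial_1$, which sends $(\pmb{\infty},\pmb{0})\mapsto(\pmb{0})-(\pmb{\infty})$. Then we move everything to $(\pmb{\infty})$ using $w$ (for $(\pmb 0)=w(\pmb\infty)$) and reduce modulo $\im(d_3\otimes\id)$ by adding explicit trivial $d_3$-boundaries, as in Lemma \ref{alpha-gam-chi}(5). The terms involving $w$ and diagonal matrices should cancel in pairs, because they contribute $(-1\wedge 1)=0$ in the $T(A)$-part. What remains should be $([E|-1]-[-1|E])\otimes(\pmb{\infty})$.

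Final step: deduce that this class lies in $\SS_2$. The class is a cycle in $H_2(B(A),\z)$ built from two commuting elements $E$ and $-1$ of $B(A)$: it is the Pontryagin product of $E$ and $-1$. Under the projection $B(A)\to T(A)$ from the decomposition $H_2(B(A))\simeq H_2(T(A))\oplus\SS_2$ (Section \ref{sec-hb-ht}), $E$ maps to the identity. Hence the $H_2(T(A),\z)$-component is $1\wedge(-1)=0$, and the element lies in the kernel, namely $\SS_2$.

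The main obstacle is the first step: finding the explicit lift and the correcting $d_3$-boundaries so that the residual term is exactly the commutator cycle in $E$ and $-1$. I would try first to choose the lift $[-1|E_{21}(1)]-[E_{21}(1)|-1]$-type terms and then conjugate by $w$, using $wE_{21}(-1)w^{-1}=E$.
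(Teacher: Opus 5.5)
\textbf{There is a genuine gap, in your first reduction.}

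\emph{The identification of $[w]\otimes Y$ with $\alpha(\lan 1\ran)$ is false.} In the proofs of Lemma \ref{alpha-gam-chi}(3),(5), $\alpha(\lan a\ran)$ is represented by $[a]\otimes Y$. Hence $\alpha(\lan 1\ran)=[1]\otimes Y=0$.

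\emph{Neither summand of $\chi_1=[w]\otimes Y-\gamma(\lan1\ran\otimes(-1))$ lies in $E^2_{2,1}$.} Since $d^1_{2,1}\circ\gamma=\epsilon\otimes\id$, the element $\gamma(\lan 1\ran\otimes(-1))$ has $d^1_{2,1}$-image $-1$. The element $[w]\otimes Y$ has the same image, because its class in $H_1(\SL_2(A),X_1)\simeq H_1(T(A),\z)\simeq\aa$ is that of $w^2=-1$. So whenever $-1\neq 1$, only the difference is a $d^1$-cycle. Consequently $d^2_{2,1}$ of $\gamma(\lan1\ran\otimes(-1))$ is undefined, and Lemma \ref{alpha-gam-chi}(5) cannot be used to discard $[w]\otimes Y$.

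\emph{This breaks your first step concretely.} Your commutator trick does move each edge of $[-1]\otimes\partial_2(X_1)=[-1]\otimes\big((\pmb0,\pmb1)-(\pmb\infty,\pmb1)+(\pmb\infty,\pmb0)\big)$ to $[-1]\otimes(\pmb\infty,\pmb0)$ modulo boundaries. But the signs sum to $+1$, so what remains is $[-1]\otimes(\pmb\infty,\pmb0)$. This represents $-1\in\aa$ and has no preimage under $d_2\otimes\id$, so the lift you want does not exist.

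\textbf{The missing idea.} The term you discarded is exactly what cancels this obstruction. Because $w^2=-1$,
\[
[w]\otimes Y=(w[w]+[w])\otimes(\pmb\infty,\pmb0)=[-1]\otimes(\pmb\infty,\pmb0)+(d_2\otimes\id)\big([w|w]\otimes(\pmb\infty,\pmb0)\big).
\]
So, modulo this boundary, $\chi_1$ is represented by
\[
[-1]\otimes\big((\pmb\infty,\pmb1)-(\pmb0,\pmb1)\big)=(h_1^{-1}[-1]-g_1^{-1}[-1])\otimes(\pmb\infty,\pmb0),
\]
where $h_1=\mtxx{1}{1}{0}{1}$ and $g_1=\mtxx{0}{1}{-1}{1}$. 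Now the two edges carry opposite signs, and your commutator lift $[h_1^{-1}|-1]-[-1|h_1^{-1}]-[g_1^{-1}|-1]+[-1|g_1^{-1}]$ works, together with $[w|w]$.

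After applying $\id\otimes\partial_1$, the $[w|w]$ piece contributes $-c(w,-1)$, and that contribution is essential. The paper obtains
\[
c(wh_1^{-1},-1)-c(h_1^{-1},-1)-c(wg_1^{-1},-1)+c(g_1^{-1},-1)-c(w,-1).
\]
It collapses this to $-3c(h_1^{-1},-1)=c(h_1,-1)$ using:
\begin{itemize}
\item the identities $wg_1^{-1}=h_1^{-1}wh_1^{-1}$ and $g_1^{-1}=-h_1w$;
\item the rule $c(hg,-1)=c(h,-1)+c(g,-1)$ for $h\in B(A)$;
\item the rule $wc(g,-1)=c(wg,-1)-c(w,-1)$.
\end{itemize}
Your second step (``should cancel in pairs'') gives no substitute for this computation.

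Your final step is correct: the resulting class lies in $\SS_2$ because $h_1$ maps to $1$ in $T(A)$.
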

\begin{proof}
To this end, we pass through the diagram
\[
\begin{tikzcd}
	B_2(\SL_2(A))\otimes_{\SL_2(A)} X_0(A^ 2) & B_2(\SL_2(A))\otimes_{\SL_2(A)} X_1(A^2) 
	\ar["\id_{B_2}\otimes \partial_1"',  l] \ar[d, "d_2\otimes \id_{X_1}"] & \\
	& B_1(\SL_2(A))\otimes_{\SL_2(A)} X_1(A^ 2) & B_1(\SL_2(A))\otimes_{\SL_2(A)} 
	Z_1(A^2).\ar[l, "\id_{B_1}\otimes \inc"']
\end{tikzcd}
\]
By Lemma \ref{alpha-gam-chi} above, $[w]\otimes Y-[-1]\otimes\partial_2(X_1)$ is a representative of $\chi_1$; passing through the inclusion, we obtain
\begin{align*}
	[w]\otimes Y-[-1]\otimes\partial_2(X_1)=& (w[w]+[w])\otimes (\pmb{\infty},\pmb{0})-[-1]\otimes\partial_2(X_1)\\
    =&[-1]\otimes(\pmb{\infty},\pmb{0})-[-1]\otimes\partial_2(X_1)+d_2\otimes\id([w|w]\otimes(\pmb{\infty},\pmb{0})\\
    =&[-1]\otimes(-(\pmb{0},\pmb{1})+(\pmb{\infty},\pmb{1}))+d_2\otimes\id([w|w]\otimes(\pmb{\infty},\pmb{0})\\
    =&(h_1^{-1}[-1]-g_1^{-1}[-1])\otimes(\pmb{\infty},\pmb{0})+d_2\otimes\id([w|w]\otimes(\pmb{\infty},\pmb{0})\\
	=&d_2\otimes\id(\{[h^{-1}_1|-1]-[-1|h^{-1}_1]-[g^{-1}_1|-1]+[-1|g^{-1}_1]+[w|w]\}\otimes(\pmb{\infty},\pmb{0}))\\
\end{align*}
where $h_1=\mtxx{1}{1}{0}{1}$ and $g_1=\mtxx{0}{1}{-1}{1}$. This lifts to the element
\begin{align*}
	\theta:&=\{[h^{-1}_1|-1]-[-1|h^{-1}_1]-[g^{-1}_1|-1]+[-1|g^{-1}_1]\}\otimes(\pmb{\infty},\pmb{0})
	\in B_2(\SL_2(A))\otimes_{\SL_2(A)} X_1(A^2).
\end{align*}
Applying $\id\otimes\partial_1$, we obtain
\begin{align*}
	(\id\otimes\partial_1)(\theta)&=\{w([h^{-1}_1|-1]-[-1|h^{-1}_1]-[g^{-1}_1|-1]+[-1|g^{-1}_1])\\
	&-[h^{-1}_1|-1]+[-1|h^{-1}_1]+[g^{-1}_1|-1]-[-1|g^{-1}_1]\}\otimes(\pmb{\infty})
\end{align*}
in $B_2(\SL_2(A))\otimes_{\SL_2(A)} X_1(A^2)$. This represents the image $d^2_{2,1}(\chi_1)\in H_2(B(A),\z)$. For $g\in\SL_2(A)$, consider the element $c(g,-1)=([g|-1]-[-1|g])\otimes(\pmb{\infty})\in H_2(B(A),\z)$. These elements satisfy the following properties.
\begin{enumerate}
	\item For $h\in B(A)$ and $g\in\SL_2(A)$ we have
	\[
		c(hg,-1)=c(h,-1)+c(g,-1)
	\]
	\item For any $g\in\SL_2(A)$,
	\[
		wc(g,-1)=c(wg,-1)-c(w,-1).
	\]
    \item $(w[w|w]-[w|w])\otimes(\pmb{\infty})=-c(w,-1)$.
    \item For $h\in B(A)$ we have $-c(h,-1)=c(h^{-1},-1)$.
\end{enumerate}

These identities imply
\begin{align*}
	d^2_{2,1}(\chi_1)&=c(wh_1^{-1},-1)-c(h_1^{-1},-1)-c(wg_1^{-1},-1)+c(g_1^{-1},-1)-c(w,-1)\in H_2(B(A),\z)
\end{align*}
Using the identities $wg_1^{-1}=h_1^{-1}wh_1^{-1}$ and $g_1^{-1}=-h_1w$, together with the properties of $c(\cdot,\cdot)$ above, we obtain
\[
d^2_{2,1}(\chi_1)=-3c(h^{-1}_1,-1)=c(h_1,-1)=c\left(\mtxx{1}{1}{0}{1},-1\right).
\]
\end{proof}
The next lemma is a reformulation of \cite[Lemma 4.5]{B-E--2023}.
\begin{lem}\label{ker-d-alpha}
    Let $A$ be a domain with $\char(A)\neq 2$. If $-1\wedge a=0$ in $\aa\wedge\aa$, then $a=p^l\nu$, where $\nu$ is an $l$-th root of unity and $l$ is an even integer.
\end{lem}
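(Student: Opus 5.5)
The plan is to reduce to a finitely generated subgroup of $\aa$ and then use the explicit structure of the exterior square of a finitely generated abelian group. I read the conclusion as saying that $a$ is a root of unity times an even power, $a=\nu\,p^{l}$ with $l$ even and $p\in\aa$; the key output to aim for is $a\in\mu(A)\cdot(\aa)^2$. Note first that the converse is clear: $(-1)\wedge b^2=2\big((-1)\wedge b\big)=(1)\wedge b=0$. Likewise $(-1)\wedge\nu=0$ whenever $-1$ and $\nu$ lie in a common cyclic group, since $\Lambda^2$ of a cyclic group vanishes.

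\textbf{Step 1 (reduction).} The functor $\Lambda^2_{\z}$ commutes with filtered colimits, and $\aa$ is the union of its finitely generated subgroups. Hence if $(-1)\wedge a=0$ in $\aa\wedge\aa$, there is a finitely generated subgroup $H\subseteq\aa$ containing $-1$ and $a$ with $(-1)\wedge a=0$ already in $\Lambda^2 H$.

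\textbf{Step 2 (structure of $H$).} Since $A$ is a domain, the torsion subgroup of $H$ is a finite subgroup of the units of a domain, hence of the multiplicative group of $\ffrac(A)$, hence cyclic. So $H=\mu_H\times F$ with $\mu_H=\lan\zeta\ran$ cyclic of order $n$ and $F\simeq\z^r$ free. Because $\char(A)\neq 2$, we have $-1\neq 1$, so $n$ is even and $-1=\zeta^{n/2}$. Then
\[
\Lambda^2H\simeq\Lambda^2\mu_H\oplus(\mu_H\otimes F)\oplus\Lambda^2F,
\qquad \Lambda^2\mu_H=0,\qquad \mu_H\otimes F\simeq(\z/n)^r .
\]

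\textbf{Step 3 (computation).} Write $a=\zeta^k f$ with $f\in F$. Then $(-1)\wedge a=(-1)\wedge\zeta^k+(-1)\wedge f=\zeta^{n/2}\otimes f$ in the summand $\mu_H\otimes F$. In coordinates $f=(f_1,\dots,f_r)$ this element is $(\tfrac n2 f_1,\dots,\tfrac n2 f_r)\in(\z/n)^r$. It vanishes exactly when every $f_i$ is even, that is, when $f=g^2$ for some $g\in F$. Therefore $a=\zeta^k g^2$, so $a$ is a root of unity times a square, which gives the stated form with an even exponent $l$.

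\textbf{Main obstacle.} The one delicate point is the correct bookkeeping of the decomposition $\Lambda^2(\mu\times F)$, and in particular checking that the mixed term is $\mu\otimes F$ rather than a quotient of it, so that no further relation kills $\zeta^{n/2}\otimes f$. I would verify this directly from the Künneth-type formula $\Lambda^2(M\oplus N)=\Lambda^2M\oplus(M\otimes N)\oplus\Lambda^2N$. The other ingredients are standard: the cyclicity of finite subgroups of $\aa$ is where the domain hypothesis enters, and $\char(A)\neq2$ enters only to ensure $-1\neq1$.
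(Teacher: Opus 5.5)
Your proof is correct and follows essentially the same route as the paper: pass to a finitely generated subgroup $H\ni -1,a$, split $H$ as a free part $F$ times a cyclic group $T$ of roots of unity of even order, use the K\"unneth decomposition of $\bigwedge^2 H$ with $\bigwedge^2 T=0$, and compute in the mixed term $F\otimes T\simeq F/|T|F$. Your Step~3 is in fact more careful than the paper's last step. Since $-1=\zeta^{|T|/2}$, the class $q\otimes(-1)$ corresponds to $\frac{|T|}{2}q$ rather than $q$ in $F/|T|F$, so its vanishing only makes $q$ a square, not a $|T|$-th power as the paper asserts. For example, $a=9i$ in $\z[\frac13,i]$ has $-1\wedge a=0$ but is not of the literal form $p^l\nu$ with $\nu^l=1$ and $l$ even. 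Your reading $a\in\mu(A)\cdot(\aa)^2$ is therefore the version that is actually true, and it is all that is used later in Corollary~\ref{ker-gam} and Lemma~\ref{thm-main}.
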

\begin{proof}
    Write $\aa=\varinjlim H$, where $H$ runs over the finitely generated subgroups of $\aa$. Since the wedge product (second homology) commutes with direct limits, we may regard $-1\wedge a\in \aa\wedge \aa$ as an element of $H\wedge H$ for a suitable $H$ containing $-1$ and $a$. Decompose $H=F\oplus T$ into its free part $F$ and its torsion part $T$ (the roots of unity in $H$); note that $T$ is cyclic and of even order, since $-1\in T$. By the K\"unneth formula, we obtain
    \[
        H\wedge H=F\wedge F\oplus F\otimes T\oplus T\wedge T.
    \]
    The summand $T\wedge T$ vanishes because $T$ is finite cyclic ($A$ being a domain). Write $a=q\nu$ with $q\in F$ and $\nu\in T$; then $a\wedge (-1)=(0,q\otimes(-1),\nu\wedge (-1))=(0,q\otimes(-1),0)$. Setting $T\simeq \z/l$ with $l=|T|$, we see that if $-1\wedge a=0$, then $\displaystyle 0=q\otimes(-1)\in F\otimes T\simeq\frac{F}{lF}$, so that $q=p^l$ for some $p\in F$, with $l$ even. 
\end{proof}
\begin{lem}
    \label{ker-wed-2}
    If $A$ is a domain with $\char(A)\neq 2$, then the $2$-torsion subgroup of $\bigwedge^2\aa$ is generated by the elements $q\wedge (-1)$ with $q\in\aa$.
\end{lem}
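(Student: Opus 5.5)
The plan is to imitate the proof of Lemma \ref{ker-d-alpha}: reduce to finitely generated subgroups of $\aa$ and then use the Künneth-type decomposition of $\bigwedge^2$ of a finitely generated abelian group.

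First, the reduction. Let $x\in\bigwedge^2\aa$ with $2x=0$. Write $\aa=\varinjlim H$ over finitely generated subgroups $H$. Then $\bigwedge^2\aa=\varinjlim \bigwedge^2 H$, since exterior powers commute with filtered colimits. So $x$ is the image of some $x_H\in\bigwedge^2 H$. The relation $2x=0$ in the colimit means that $2x_H$ dies in $\bigwedge^2 H'$ for some finitely generated $H'\supseteq H$. Enlarging $H$ to $H'$, and adjoining $-1$ if necessary, we may assume $-1\in H$ and $2x_H=0$ in $\bigwedge^2 H$. It therefore suffices to show that the $2$-torsion of $\bigwedge^2 H$ is generated by the elements $q\wedge(-1)$ with $q\in H$.

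Next, the structure of $\bigwedge^2 H$. Write $H=F\oplus T$ with $F$ free of finite rank and $T$ the torsion part. Since $A$ is a domain, $T$ is a finite subgroup of the multiplicative group of the field $\ffrac(A)$, hence cyclic, say $T\simeq\z/l$ with generator $\zeta$. Since $\char(A)\neq 2$ we have $-1\neq 1$, so $-1\in T$ has order $2$; thus $l$ is even and $-1=\zeta^{l/2}$ is the unique element of order $2$ in $T$. As in Lemma \ref{ker-d-alpha},
\[
\textstyle\bigwedge^2 H\simeq \bigwedge^2F\oplus (F\otimes T)\oplus\bigwedge^2 T,
\]
where the middle summand is embedded via $f\otimes t\mapsto f\wedge t$. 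Here $\bigwedge^2T=0$ because $T$ is cyclic, and $\bigwedge^2F$ is free abelian, hence torsion-free. Consequently the $2$-torsion of $\bigwedge^2H$ equals the $2$-torsion of $F\otimes T\simeq F/lF\simeq(\z/l)^r$.

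Finally, identify that $2$-torsion. The $2$-torsion of $(\z/l)^r$ is $(l/2)(\z/l)^r$. Under the isomorphism $F\otimes T\simeq F/lF$ given by $f\otimes\zeta\mapsto \bar f$, this subgroup is spanned by the elements $f\otimes\zeta^{l/2}=f\otimes(-1)$ with $f\in F$. Their images in $\bigwedge^2H$, and then in $\bigwedge^2\aa$, are $f\wedge(-1)$, which proves the claim.

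The only delicate point is the colimit step: one must enlarge $H$ so that $2x_H$ actually vanishes at a finite stage, rather than merely in the colimit. Filtered colimits make this automatic, since an element that dies in the colimit already dies at some finite stage. The rest is the same bookkeeping as in Lemma \ref{ker-d-alpha}.
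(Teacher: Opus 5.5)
Your proof is correct and follows essentially the same route as the paper's: pass to a finitely generated subgroup $H\ni -1$, use $\bigwedge^2 H\simeq\bigwedge^2F\oplus(F\otimes T)$ with $\bigwedge^2F$ torsion-free, and read off the $2$-torsion of $F\otimes T\simeq F/lF$ as the elements $f\otimes\zeta^{l/2}=f\otimes(-1)$. Your explicit enlargement of $H$ so that $2x_H=0$ already holds in $\bigwedge^2H$ is extra care the paper omits: it simply asserts that $\theta$ is torsion in $H\wedge H$, even though $\bigwedge^2H\to\bigwedge^2\aa$ need not be injective.
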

\begin{proof}
    Let $\theta\in\bigwedge^2\aa$. Since $\aa$ is the direct limit of its finitely generated subgroups, we may choose such a subgroup $H$ which contains $-1$ and satisfies $\theta\in\bigwedge^2H$. Let $H=L\oplus T$ be the decomposition into free and torsion parts, respectively. By the K\"unneth formula,
    \[
        H\wedge H=L\wedge L\oplus L\otimes T\oplus T\wedge T=L\wedge L\oplus L\otimes T
    \]
    where the summand $T\wedge T$ vanishes because $T$ is finite cyclic. Since $\theta$ is a torsion element of $H\wedge H$ and $L\wedge L$ is free, we have $\displaystyle\theta\in L\otimes T\simeq \frac{L}{|T|L}$. We may therefore write $\theta=p\wedge\zeta$, where $p\in L$ and $\zeta\in T$ is a generator of $T$. Since $2\theta=0$, we have $p^2=q^{|T|}$ for some $q$, and hence (note that $T$ has even order because $-1\in T$)
    \[
        p=q^{\frac{|T|}{2}}
    \]
    Finally, $p\wedge\zeta=q\wedge\zeta^{|T|/2}=q\wedge(-1)$.
\end{proof}
\begin{cor}\label{ker-gam}
    Let $A$ be a domain with $\char(A)\neq 2$. Then every $\theta\in\ker(\gamma)$ is of the form $\theta=K+\Lan \nu\Ran\otimes(-1)$, where $K\in\II^2_A\otimes\mu_2(A)$ and $\nu$ is an $l$-th root of unity in $A$ with $l$ even.
\end{cor}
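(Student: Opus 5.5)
The plan is to reduce $\theta$ modulo $\II^2_A\otimes\mu_2(A)$ to a single term $\Lan a\Ran\otimes(-1)$. I would then apply $d^2_{2,1}$ and use Lemma \ref{alpha-gam-chi} to force $-1\wedge a=0$. Finally, Lemma \ref{ker-d-alpha} identifies $\lan a\ran$ with the class of a root of unity.

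\textbf{Step 1 (reduction to one term).} Since $A$ is a domain with $\char(A)\neq 2$, we have $\mu_2(A)=\{\pm1\}$. Hence every element of $\II_A\otimes\mu_2(A)$ has the form $\sum_i \Lan a_i\Ran\otimes(-1)$. From
\[
\Lan ab\Ran=\Lan a\Ran+\Lan b\Ran+\Lan a\Ran\Lan b\Ran
\]
we get $\Lan ab\Ran\equiv\Lan a\Ran+\Lan b\Ran$ modulo $\II_A^2$. Consequently, for $\theta\in\ker(\gamma)$ we can write $\theta=K_0+\Lan a\Ran\otimes(-1)$ with $K_0\in\II^2_A\otimes\mu_2(A)$ and $a=\prod a_i$.

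\textbf{Step 2 (apply $d^2_{2,1}$).} By Lemma \ref{alpha-gam-chi}(3),
\[
\gamma(\Lan b\Ran\otimes(-1))=\alpha(\lan b\ran)+\Lan b\Ran\chi_1 .
\]
Applying Lemma \ref{alpha-gam-chi}(5) and Lemma \ref{d2-chi}, we obtain
\[
d^2_{2,1}\gamma(\Lan b\Ran\otimes(-1))=(-1\wedge b,0)+\Lan b\Ran\, d^2_{2,1}(\chi_1),
\]
whose second summand lies in $\SS_2$. For a generator $\Lan c\Ran\Lan b\Ran\otimes(-1)$ of $K_0$, equivariance of $d^2_{2,1}$ gives
\[
d^2_{2,1}\gamma(\Lan c\Ran\Lan b\Ran\otimes(-1))=\Lan c\Ran(-1\wedge b,0)+\Lan c\Ran\Lan b\Ran d^2_{2,1}(\chi_1).
\]
The element $\mathrm{diag}(1,c)$ acts trivially by conjugation on $T(A)$, so it acts trivially on $H_2(T(A),\z)\simeq\aa\wedge\aa$. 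Therefore $\Lan c\Ran$ kills the first coordinate, and the whole value lies in $\SS_2$.

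Projecting $0=d^2_{2,1}\gamma(\theta)$ onto the summand $H_2(T(A),\z)$ then yields $-1\wedge a=0$. This step requires the decomposition $H_2(B(A),\z)\simeq H_2(T(A),\z)\oplus\SS_2$ to be $\RR_A$-equivariant, with $\SS_2$ a submodule. I expect this compatibility, namely that the $\GG_A$-action respects the splitting and is trivial on the torus part, to be the main point needing care.

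\textbf{Step 3 (conclusion).} By Lemma \ref{ker-d-alpha}, $a=p^l\nu$ with $l$ even and $\nu$ an $l$-th root of unity. Then $\lan a\ran=\lan\nu\ran$ in $\GG_A$, so $\Lan a\Ran=\Lan\nu\Ran$. Setting $K:=K_0$ gives $\theta=K+\Lan\nu\Ran\otimes(-1)$, as claimed.
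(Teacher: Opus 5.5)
Your proposal is correct and follows essentially the paper's argument: apply $d^2_{2,1}\circ\gamma$, use Lemma \ref{alpha-gam-chi}(3),(5) and Lemma \ref{d2-chi} to isolate the torus component $-1\wedge\prod_i a_i=0$, invoke Lemma \ref{ker-d-alpha}, and absorb the remaining terms into $\II^2_A\otimes\mu_2(A)$; the paper simply does this $\II_A^2$-reduction at the end rather than at the start. The equivariance point you flag is also used tacitly in the paper, and it holds: $\lan c\ran$ acts on $H_2(B(A),\z)$ through conjugation by $\mathrm{diag}(1,c)$, which fixes $\pmb{\infty}$ and $T(A)$ pointwise and commutes with $T(A)\hookrightarrow B(A)\twoheadrightarrow T(A)$, so the splitting is preserved and the action on $H_2(T(A),\z)$ is trivial.
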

\begin{proof}
    Let $x=\displaystyle\sum_{i=0}^n\Lan a_i\Ran\otimes(-1)\in\ker(\gamma)$. Applying $\gamma$ to $x$ and using Lemma \ref{alpha-gam-chi} (3), we obtain the equation
    \[
        0=\alpha(\lan\prod_i a_i\ran)+\sum_i\Lan a_i\Ran\chi_1.
    \]
    
    Applying the differential $d^2_{2,1}$, we obtain an element of $H_2(B(A),\z)$. Lemma \ref{alpha-gam-chi} (5), Lemma \ref{d2-chi} and the decomposition $H_2(B(A),\z)\simeq H_2(T(A),\z)\oplus \SS_2$ then imply that
    \[
        0=-1\wedge\prod_{i=1}^na_i=c(-1,\prod_{i=1}^nh_{a_i+1}).
    \]
    Hence, by Lemma \ref{ker-d-alpha}, $\displaystyle\lan\prod_{i=1}^na_i\ran=\lan\nu\ran$, where $\nu$ is a primitive $l$-th root of unity with $l$ even. Note that
    \[
        \Lan\nu\Ran\otimes(-1)=\Lan\prod_{i=1}^na_i\Ran\otimes(-1)=\Lan a_1\Ran\Lan\prod_{i>1}^na_i\Ran\otimes(-1)+\Lan a_1\Ran\otimes(-1)+\Lan\prod_{i>1}^na_i\Ran\otimes(-1)
    \]
    and then
    \[
        \Lan a_1\Ran\otimes(-1)+\Lan\prod_{i>1}^na_i\Ran\otimes(-1)=\Lan\nu\Ran\otimes(-1)+\Lan a_1\Ran\Lan\prod_{i>1}^na_i\Ran\otimes(-1)
    \]
    where the second summand on the right-hand side lies in $\II^2_A\otimes\mu_2(A)$. Iterating this process in order to reduce $\displaystyle\Lan\prod_{i>1}^na_i\Ran\otimes(-1)$, we obtain
    \[
        \sum_{i=1}^n\Lan a_i\Ran\otimes(-1)=K+\Lan\nu\Ran\otimes(-1)
    \]
    where $K\in \II^2_A\otimes\mu_2(A)$.
\end{proof}
\section{The element \texorpdfstring{$\theta_a$}{Oa}}
For $a\in\aa$, consider the element $\theta_a=[w]\otimes\partial_2(X_{-a^{-1}}-X_a)\in E^2_{2,1}$. Then
\begin{equation}
    \label{tht-id}
    \begin{split}
    \theta_a&=[w]\otimes\partial_2(X_{-a^{-1}}+X'_{-a^{-1}}-X'_{-a^{-1}}-X_a)\\
    &=[w]\otimes Y - [w]\otimes\partial_2(X'_{-a^{-1}}+X_a)\\
    &=[w]\otimes Y - [w]\otimes\partial_2(wX_a+X_a)\\
    &=[w]\otimes Y - \{w[w]+[w]\}\otimes\partial_2(X_a)\\
    &=[w]\otimes Y - [-1]\otimes\partial_2(X_a)
    \end{split}
\end{equation}
\begin{prp}
    \label{sqr-plus-tht}
    Let $a\in\aa$ be such that $a^2+1\in\aa$. Then $\Lan a^2+1\Ran\theta_a=0$.
\end{prp}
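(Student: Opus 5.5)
The plan is to realize multiplication by $\lan a^2+1\ran$ on $E^2_{2,1}$ by an explicit matrix of determinant $a^2+1$ that commutes with $w$. Then $\lan a^2+1\ran\theta_a$ can be computed directly on the representative $[w]\otimes\partial_2(X_{-a^{-1}}-X_a)$.

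Set
\[
g:=\mtxx{1}{-a}{a}{1}\in\GL_2(A),\qquad \det g=a^2+1\in\aa .
\]
The action of $\GL_2(A)$ on $E^1$ and $E^2$ is by conjugation on the group and by the diagonal action on $X_\bullet$. Inner automorphisms by elements of $\SL_2(A)$ act trivially on the relevant homology. Hence the $\GL_2(A)$-action factors through $\det$, and $g$ acts as $\lan a^2+1\ran$, just as $\mtxx{1}{0}{0}{b}$ acts as $\lan b\ran$. I will first record this reduction carefully, checking that it is compatible with the identification $E^2_{2,1}$ used in Lemma \ref{alpha-gam-chi}, where $\Lan a\Ran([w]\otimes Y)=[wa]\otimes Y-[w]\otimes Y$.

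The key features of $g$ are the following.
\begin{enumerate}
\item $g$ is a rotation-type matrix, so $gwg^{-1}=w$; thus $g\cdot[w]=[w]$ in the bar complex.
\item On lines, $g\pmb{\infty}=\pmb{a}$ and $g\pmb{0}=\pmb{-a^{-1}}$.
\item $w\pmb{a}=\pmb{-a^{-1}}$.
\end{enumerate}
Hence
\[
g\cdot([w]\otimes Y)=[w]\otimes\big((\pmb{a},\pmb{-a^{-1}})+(\pmb{-a^{-1}},\pmb{a})\big).
\]
The difference $Y-gY$ is a cycle in $Z_1$ supported on the four lines $\pmb{\infty},\pmb{0},\pmb{a},\pmb{-a^{-1}}$. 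These are pairwise unimodular, since $a$, $a^{-1}$ and $a+a^{-1}=a^{-1}(a^2+1)$ are units. So $Y-gY$ is $\partial_2$ of an explicit combination of triples among these four lines. I expect this combination to be $\pm(X_a+X'_{-a^{-1}})$ together with its $g$-translate, because $\partial_2(X_a+wX_a)=\partial_2(X_a+X'_{-a^{-1}})$ already appeared in \eqref{tht-id}.

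The computation then proceeds as follows. Starting from $\theta_a=[w]\otimes\partial_2(X_{-a^{-1}}-X_a)$, I apply $g$ to get $[w]\otimes\partial_2(gX_{-a^{-1}}-gX_a)$. The triples $gX_{-a^{-1}}$ and $gX_a$ are $(\pmb{a},\pmb{-a^{-1}},g\pmb{-a^{-1}})$ and $(\pmb{a},\pmb{-a^{-1}},g\pmb{a})$. Here $g\pmb{a}=\lan(1-a^2,2a)\ran$ and $g\pmb{-a^{-1}}=\lan(2a,a^2-1)\ran$, and these two lines are swapped by $w$. I then compare $[w]\otimes\partial_2(\cdot)$ of both sides using two tools:
\begin{enumerate}
\item the bar-complex manipulation $(w[w]+[w])=[-1]+d_2[w|w]$, exactly as in \eqref{tht-id} and the proof of Lemma \ref{d2-chi};
\item the fact that in $E^2_{2,1}=E^1_{2,1}/\im d^1_{3,1}$ one may replace $[w]\otimes\partial_2(\xi)$ by $[w]\otimes\partial_2(\xi')$ whenever $\xi-\xi'$ is $\partial_3$ of a $4$-tuple, that is, a $\partial_3$-image on $X_3$.
\end{enumerate}
The four lines $\pmb{\infty},\pmb{0},\pmb{a},\pmb{-a^{-1}}$, together with $g\pmb{a}$ and $g\pmb{-a^{-1}}$, should give enough $4$-tuples to transport $gX_a$ back to $X_a$ and $gX_{-a^{-1}}$ back to $X_{-a^{-1}}$ modulo $\partial_3$ and $w$-symmetric terms. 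Any $w$-symmetric term $[w]\otimes\partial_2(\eta+w\eta)$ collapses to $[-1]\otimes\partial_2(\eta)$, which is $2$-torsion-type and should cancel in pairs.

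The main obstacle is this last step: producing an explicit chain exhibiting $g\theta_a-\theta_a$ as a boundary. In particular one must check that all needed $4$-tuples consist of pairwise unimodular lines; for example $g\pmb{a}$ versus $\pmb{\infty}$ requires $2a\in\aa$, which may fail. If that happens, I will avoid those tuples and instead work with the expression $\theta_a=[w]\otimes Y-[-1]\otimes\partial_2(X_a)$. I would handle its two summands separately: for $[-1]\otimes\partial_2(X_a)$, use that $g$ commutes with $-1$; for $[w]\otimes Y$, use that $[w]\otimes(Y-gY)$ equals $[w]\otimes\partial_2$ of a combination of triples only among $\pmb{\infty},\pmb{0},\pmb{a},\pmb{-a^{-1}}$. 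The relation $w X_a=X'_{-a^{-1}}$ then identifies this with a $[-1]\otimes\partial_2(\cdot)$ term that cancels the change in the second summand.
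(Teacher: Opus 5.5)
Your strategy works and differs from the paper's, but as written it is only a plan. The decisive step is left open, and your guess for it (``$\pm(X_a+X'_{-a^{-1}})$ together with its $g$-translate'') is not the right chain.

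You can drop the primary plan entirely. It needs lines such as $g\pmb a=\lan(1-a^2,2a)\ran$, which are admissible next to $\pmb 0,\pmb\infty$ only when $1-a^2,2a\in\aa$. The fallback closes with no $4$-tuples at all.

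Since $g$ commutes with $w$, we have $Y-gY=(1+w)\big((\pmb\infty,\pmb 0)-(\pmb a,\pmb{-a^{-1}})\big)$. Because $w^2=-1$ fixes every line, $(1+w)(1-w)=0$ on $X_1(A^2)$. Correcting by $(1-w)(\pmb\infty,\pmb a)$ gives
\[
Y-gY=\partial_2\big((1+w)Z\big),\qquad Z:=X_a-(\pmb 0,\pmb a,\pmb{-a^{-1}}),
\]
where all triples are admissible since $a,a^2+1\in\aa$. The manipulation in \eqref{tht-id} turns this into $[w]\otimes(Y-gY)=[-1]\otimes\partial_2(X_a)-[-1]\otimes\partial_2(\pmb 0,\pmb a,\pmb{-a^{-1}})$. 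Hence
\[
g\theta_a=[w]\otimes gY-[-1]\otimes\partial_2(gX_a)=\theta_a+[-1]\otimes\partial_2(\pmb 0,\pmb a,\pmb{-a^{-1}})-[-1]\otimes\partial_2(gX_a).
\]

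The fact that $g$ commutes with $-1$ does not finish the argument. The triple $gX_a$ lies in the $\SL_2(A)$-orbit of class $\lan a(a^2+1)\ran$, not $\lan a\ran$, so the second summand of $\theta_a$ genuinely changes. What cancels the last two terms is the identity $(\pmb 0,\pmb a,\pmb{-a^{-1}})=ghX_a=(ghg^{-1})\,gX_a$, with $h=\mtxx{a}{-1}{1}{0}\in\SL_2(A)$. To check it, note $hX_a=(\pmb{a^{-1}},\pmb\infty,\pmb 0)$ and $g\pmb{a^{-1}}=\pmb 0$. Since $ghg^{-1}\in\SL_2(A)$ and $-1$ is central, both terms equal $\gamma(\lan a(a^2+1)\ran\otimes(-1))$.

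Thus $g\theta_a=\theta_a$ in $E^1_{2,1}$. Since $g$ acts as $\lan a^2+1\ran$, this gives $\Lan a^2+1\Ran\theta_a=0$. A minor correction: $E^2_{2,1}$ is $\ker d^1_{2,1}\subseteq H_1(\SL_2(A),Z_1)$, not a quotient by $\im d^1_{3,1}$. This is harmless here.

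Compared with the paper: the paper conjugates by $\rho_a\in\SL_2(A)$ with $\rho_a^{-1}w\rho_a=w\cdot\mathrm{diag}(a^2+1,(a^2+1)^{-1})$. It also uses the $4$-tuple $(\pmb\infty,\pmb 0,\pmb{-a^{-1}},\pmb a)$ and the auxiliary $\tilde\theta_a$. This realizes $\lan-(a^2+1)\ran$ rather than $\lan a^2+1\ran$. The paper therefore needs the extra input $\Lan-1\Ran\theta_a=0$, which it gets from $\chi_1-\theta_a=\gamma(\Lan a\Ran\otimes(-1))$ and Lemma~\ref{alpha-gam-chi}(6),(7).

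Your $g=\mtxx{1}{-a}{a}{1}$ commutes with $w$ and has determinant $a^2+1$, so it produces $\lan a^2+1\ran$ directly. The only cost is the standard fact that the $\GL_2(A)$-action on $H_1(\SL_2(A),Z_1)$ factors through $\det$. In exchange, the argument is shorter and independent of $\chi_1$.
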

\begin{proof}
    For $a\in\aa$, consider the elements $\tilde{X}_a=(\pmb{a},\pmb{\infty},\pmb{0})$ and $\tilde{X}'_a=(\pmb{a},\pmb{0},\pmb{\infty})$ of $X_2(A^2)$. The element $\partial_3((\pmb{\infty},\pmb{0},\pmb{-a^{-1}},\pmb{a}))\in Z_2(A^2)$ makes sense precisely because $a^2+1\in\aa$, and hence $\theta_a=[w]\otimes\partial_2((\pmb{0},\pmb{-a^{-1}},\pmb{a})-(\pmb{\infty},\pmb{-a^{-1}},\pmb{a}))$, and setting $\rho_a=\mtxx{a}{\frac{1}{a^2+1}}{-1}{\frac{a}{a^2+1}}\in\SL_2(A)$, we have $\theta_a=[w]\otimes\rho_a\partial_2(\tilde{X}_{-a(a^2+1)}-\tilde{X}_{\frac{a^2+1}{a}})$. It is not difficult to show that $\rho_a^{-1}w\rho_a=w(a^2+1)$, and therefore
    \begin{align*}
        \theta_a&=[\rho_a^{-1}w\rho_a]\otimes\rho_a^{-1}\rho_a\partial_2(\tilde{X}_{-a(a^2+1)}-\tilde{X}_{\frac{a^2+1}{a}})\\
        &=[w(a^2+1)]\otimes\partial_2(\tilde{X}_{-a(a^2+1)}-\tilde{X}_{\frac{a^2+1}{a}})\\
        &=\lan -(a^2+1)\ran(-[-w]\otimes\partial_2(\tilde{X}_{-a^{-1}}-\tilde{X}_{a}))\\
        &=\lan -(a^2+1)\ran ([w]\otimes\partial_2(\tilde{X}_{-a^{-1}}-\tilde{X}_a))\\
        &=\lan-(a^2+1)\ran\tilde{\theta}_a
    \end{align*}
    where $\tilde{\theta}_a:=[w]\otimes\partial_2(\tilde{X}_{-a^{-1}}-\tilde{X}_a)$. Now, using \eqref{tht-id} and the matrix $\mtxx{1}{-a^{-1}}{a}{0}\in\SL_2(A)$, which sends $X_a$ to $\tilde{X}_a$, we obtain
    \begin{align*}
        \theta_a&=[w]\otimes Y-[-1]\otimes\partial_2(X_a)\\
        &=[w]\otimes Y-[-1]\otimes\partial_2(\tilde{X}_a)\\
        &=[w]\otimes\partial_2(\tilde{X}_{-a^{-1}}+\tilde{X}'_{-a^{-1}})-[-1]\otimes\partial_2(\tilde{X}_a)\\
        &=[w]\otimes\partial_2(\tilde{X}_{-a^{-1}}+\tilde{X}'_{-a^{-1}})-\{w[w]+[w]\}\otimes\partial_2(\tilde{X}_a)\\
        &=[w]\otimes\partial_2(\tilde{X}_{-a^{-1}}+\tilde{X}'_{-a^{-1}})-[w]\otimes\partial_2(\tilde{X}'_{-a^{-1}}+\tilde{X}_a)\\
        &=[w]\otimes\partial_2(\tilde{X}_{-a^{-1}}-\tilde{X}_a)=\tilde{\theta}_a\\
    \end{align*}
    so that $\Lan-(a^2+1)\Ran\theta_a=0$.
    Now, by Lemma \ref{alpha-gam-chi} (2) and by \eqref{tht-id}, we have
    \[
    \chi_1-\theta_a=\gamma(\Lan a\Ran\otimes(-1))
    \]
    Multiplying by $\Lan -1\Ran$ and using Lemma \ref{alpha-gam-chi} (6), we get
    \begin{align*}
        -\Lan -1\Ran\theta_a&=\Lan a\Ran\gamma(\Lan -1\Ran\otimes(-1))\\
        &=\Lan a\Ran\alpha(\lan -1\ran)\\
        &=0
    \end{align*}
    and finally
    \[
        0=\Lan -(a^2+1)\Ran\theta_a=\Lan a^2+1\Ran\Lan -1\Ran\theta_a+\Lan -1\Ran\theta_a+\Lan a^2+1\Ran\theta_a=\Lan a^2+1\Ran\theta_a.
    \]
\end{proof}
\begin{cor}
    \label{sqr-plus-chi}
    Let $a\in\aa$ be such that $a^2+1\in\aa$. Then $\Lan a^2+1\Ran\chi_1=0$.
\end{cor}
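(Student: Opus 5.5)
We deduce the corollary from Proposition \ref{sqr-plus-tht} by writing $\chi_1$ in terms of $\theta_a$ plus a correction term that is killed by $\Lan a^2+1\Ran$.

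The first step is to recall the identity used at the end of the proof of Proposition \ref{sqr-plus-tht}. Comparing Lemma \ref{alpha-gam-chi} (2) with \eqref{tht-id} gives
\[
\chi_1-\theta_a=\gamma(\Lan a\Ran\otimes(-1)).
\]
Indeed, $\chi_1=[w]\otimes Y-[-1]\otimes\partial_2(X_1)$ and $\theta_a=[w]\otimes Y-[-1]\otimes\partial_2(X_a)$, so the difference is $\gamma((\lan a\ran-\lan 1\ran)\otimes(-1))$. Multiplying by $\Lan a^2+1\Ran$ and applying Proposition \ref{sqr-plus-tht} then yields
\[
\Lan a^2+1\Ran\chi_1=\Lan a^2+1\Ran\gamma(\Lan a\Ran\otimes(-1)).
\]

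It therefore remains to show that $\Lan a^2+1\Ran\,\gamma(\Lan a\Ran\otimes(-1))=0$. By Lemma \ref{alpha-gam-chi} (3), $\gamma(\Lan a\Ran\otimes(-1))=\alpha(\lan a\ran)+\Lan a\Ran\chi_1$. Since $\alpha$ is $\RR_A$-linear and $\GG_A$ acts trivially on $\hat E^2_{2,1}\simeq \GG_A$, we have $\Lan b\Ran\alpha(\lan a\ran)=0$ for every $b$. Alternatively, on representatives, $\Lan b\Ran([a]\otimes Y)=[a]\otimes Y-[a]\otimes Y=0$, because conjugation by $\mathrm{diag}(1,b)$ fixes $[a]$ and multiplies $Y$ by $\lan b\ran$, and $Y$ is symmetric. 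This gives
\[
\Lan a^2+1\Ran\chi_1=\Lan a^2+1\Ran\Lan a\Ran\chi_1 .
\]
Now apply the same argument with $-a$ in place of $a$. Note that $(-a)^2+1=a^2+1$, and $\theta_{-a}$ is also covered by Proposition \ref{sqr-plus-tht}. This gives
\[
\Lan a^2+1\Ran\chi_1=\Lan a^2+1\Ran\Lan -a\Ran\chi_1 .
\]
Since $\Lan -1\Ran\chi_1=0$ by Lemma \ref{alpha-gam-chi} (7), we get $\lan -a\ran\chi_1=\lan a\ran\lan -1\ran\chi_1=\lan a\ran\chi_1$, so the two relations coincide and yield nothing new.

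The main obstacle is this last step, so I would instead try to show directly that $\Lan a^2+1\Ran\Lan a\Ran\chi_1=0$. Write $u=a^2+1$ and set $X:=\Lan u\Ran\chi_1$; the relation above says $X=\lan a\ran X$.

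The most promising route is to use the matrix $\rho_a$ from the proof of Proposition \ref{sqr-plus-tht} and compute $\Lan a\Ran\theta_a$ directly. Since $\theta_a=\lan -u\ran\tilde\theta_a$ and $\tilde\theta_a=\theta_a$, it remains to show that $\lan a\ran\theta_a=\theta_a$. The natural tool is the matrix $\mtxx{1}{-a^{-1}}{a}{0}$, which relates $X_a$ to $\tilde X_a$: one reruns the computation of the proposition twisted by $\mathrm{diag}(a,1)$, which turns $\tilde X_a$ into a configuration of the form $X_{a^{\pm 1}}$. If $\Lan a\Ran\theta_a=0$, then
\[
\Lan u\Ran\gamma(\Lan a\Ran\otimes(-1))=\Lan u\Ran(\chi_1-\theta_a)=\Lan u\Ran\chi_1-0,
\]
which does not yet close the argument. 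So the real goal is to show $\Lan u\Ran\alpha$-type terms vanish and that $\Lan u\Ran\Lan a\Ran\chi_1=0$, presumably by applying the proposition to both $\theta_a$ and $\theta_{a^{-1}}$; this step is where the actual work lies.
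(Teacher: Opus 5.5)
Your proposal does not reach the conclusion. It ends with ``this step is where the actual work lies'', and the plan in the last paragraph (twisting $\rho_a$ by $\mathrm{diag}(a,1)$, bringing in $\theta_{a^{-1}}$) is neither carried out nor needed. The gap comes from an algebra slip. You correctly obtain $\Lan u\Ran\chi_1=\Lan u\Ran\Lan a\Ran\chi_1$ with $u=a^2+1$. But for $X:=\Lan u\Ran\chi_1$ the right-hand side is $\Lan a\Ran X=\lan a\ran X-X$, so the relation says $\lan a\ran X=2X$, not $X=\lan a\ran X$. Moreover $X$ has exponent $2$: by Lemma~\ref{alpha-gam-chi}(3) applied to $u$, $X=\gamma(\Lan u\Ran\otimes(-1))-\alpha(\lan u\ran)$, and both $\II_A\otimes\mu_2(A)$ and $\hat{E}^2_{2,1}\simeq\GG_A$ have exponent $2$. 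Hence $\lan a\ran X=0$, and since $\lan a\ran^2=1$ in $\RR_A$, you get $X=0$. (The same $2$-torsion makes the sign in Lemma~\ref{alpha-gam-chi}(3) immaterial. With the sign its proof actually produces, $\Lan a\Ran\chi_1=\alpha(\lan a\ran)-\gamma(\Lan a\Ran\otimes(-1))$, your relation reads $X=X-\lan a\ran X$ and closes at once.)

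The paper's argument avoids $\gamma$ entirely by comparing $\theta_a$ with $\lan a\ran\chi_1$ rather than with $\chi_1$. Since $\lan a\ran\chi_1=[wa]\otimes Y-[-1]\otimes\partial_2(X_a)=[a]\otimes Y+[w]\otimes Y-[-1]\otimes\partial_2(X_a)$, identity \eqref{tht-id} gives $\lan a\ran\chi_1-\theta_a=\alpha(\lan a\ran)$. Now multiply by $\Lan u\Ran$. Proposition~\ref{sqr-plus-tht} kills $\theta_a$, and the triviality of the $\GG_A$-action on $\im(\alpha)$ kills $\alpha(\lan a\ran)$; you did establish this triviality, since conjugation by $\mathrm{diag}(1,b)$ fixes both $[a]$ and $Y$. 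So $\lan a\ran\Lan u\Ran\chi_1=0$, and therefore $\Lan u\Ran\chi_1=0$.
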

\begin{proof}
    By Lemma \ref{alpha-gam-chi} (2), we have
    \[
    \chi_a=\lan a\ran\chi_1=[wa]\otimes Y-[-1]\otimes\partial_2(X_a)=[a]\otimes Y+[w]\otimes Y-[-1]\otimes\partial_2(X_a)]
    \]
    and using \eqref{tht-id} we have
    \[
    \chi_a-\theta_a=\alpha(\lan a\ran)
    \]
    Multiplying by $\lan a\ran\Lan a^2+1\Ran$ and using Proposition \ref{sqr-plus-tht}, we obtain the required identity.
\end{proof}
\section{The map \texorpdfstring{$H_\bullet(B(A),\z)\to H_\bullet(T(A),\z)$}{H.(B)->H.(T)}}\label{sec-hb-ht}
Consider the group extension
\[
    \begin{tikzcd}
        0\ar[r] & N(A)\ar[r] & B(A) \ar[r] & T(A)\ar[r] & 0
    \end{tikzcd}
\]
where the map $B(A)\to T(A)$ sends $\mtxx{a}{b}{0}{a^{-1}}$ to $\mtxx{a}{0}{0}{a^{-1}}$, and the kernel $N(A)$ consists of the matrices $\mtxx{1}{b}{0}{1}$ with $b\in A$. Clearly $N(A)\simeq A$ and $T(A)\simeq\aa$, and the resulting action of $\aa$ on $A$ is given by $a\cdot b=a^2b$ (it is induced by conjugation).
The map above is split by the inclusion $T(A)\to B(A)$, which yields a canonical decomposition, for any $n\geq 0$,
\[
    H_n(B(A),\z)\simeq H_n(T(A),\z)\oplus H_n(B(A),T(A);\z),
\]
where $H_n(B(A),T(A);\z)$ denotes the relative homology of the pair $(B(A),T(A))$ (for definitions and properties, see \cite[page 153]{knudson2001}). This relative homology group will often be denoted by $\SS_n$.
\begin{lem}
    \label{ht-hb-l1}
    If $A$ is a subring of $\q$, then for any $n\geq 0$
    \[ 
        H_n(B(A),\z)\simeq H_n(T(A),\z)\oplus H_{n-1}(\aa,A)
    \]
    and if $6\in\aa$ then $H_n(B(A),\z)\simeq H_n(T(A),\z)$ for $n\geq 0$.
\end{lem}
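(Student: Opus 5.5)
The plan is to use the Lyndon--Hochschild--Serre spectral sequence of the split extension $0\to N(A)\to B(A)\to T(A)\to 0$. Here $N(A)\simeq A$ and $T(A)\simeq\aa$, with $\aa$ acting on $A$ through $a\cdot b=a^2b$. This gives
\[
E^2_{p,q}=H_p(\aa,H_q(A,\z))\Longrightarrow H_{p+q}(B(A),\z).
\]
When $A\subseteq\q$, the additive group $A$ is torsion free of rank one, so it is a direct limit of copies of $\z$. Homology commutes with direct limits, so $H_q(A,\z)=\bigwedge^q_\z A$, and $\bigwedge^q A=0$ for $q\geq 2$ because $A\otimes\q=\q$ has rank one. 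Only the rows $q=0$ and $q=1$ survive: $E^2_{p,0}=H_p(\aa,\z)=H_p(T(A),\z)$ and $E^2_{p,1}=H_p(\aa,A)$.

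With two rows, the only possibly nonzero differential is $d^2:E^2_{p,0}\to E^2_{p-2,1}$. The extension splits via the inclusion $T(A)\to B(A)$, so the edge map $H_p(B(A),\z)\to H_p(T(A),\z)=E^2_{p,0}$ is surjective and split. It follows that $d^2$ vanishes on the bottom row, and the sequence degenerates at $E^2$. We obtain short exact sequences
\[
0\to H_{n-1}(\aa,A)\to H_n(B(A),\z)\to H_n(T(A),\z)\to 0,
\]
which split compatibly with the canonical decomposition $H_n(B(A))\simeq H_n(T(A))\oplus\SS_n$ of this section. This identifies $\SS_n\simeq H_{n-1}(\aa,A)$ and proves the first claim.

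For the second claim, when $6\in\aa$ we have $2\in\aa$, so $a=2$ acts on $A$ as multiplication by $4$, which is an automorphism of $A$ since $4\in\aa$. I will use the standard center-kills lemma: if $c$ is central in a group $G$ and acts on a module $M$ by an automorphism $\phi$ with $\phi-1$ invertible, then $H_*(G,M)=0$. Indeed, conjugation by $c$ induces the identity on $H_*(G,M)$, but it also equals $\phi_*$, so $(\phi-1)_*=0$; as $\phi-1$ is invertible, $H_*(G,M)=0$. Here $\aa$ is abelian, so $c=2$ is central, and $\phi-1$ is multiplication by $4-1=3$, which is invertible because $3\in\aa$. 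Hence $H_{n-1}(\aa,A)=0$ for all $n$, so $H_n(B(A),\z)\simeq H_n(T(A),\z)$.

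The main point requiring care is the identification $H_q(A,\z)=\bigwedge^q A$, with vanishing for $q\geq2$, for non-finitely-generated subgroups of $\q$. I would handle it by writing $A$ as the direct limit of the cyclic subgroups $\frac1m\z$ and passing homology through the limit. I expect the remaining steps, the degeneration from the splitting and the center-kills argument, to be routine.
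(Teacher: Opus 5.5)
Your argument is correct and matches the standard proof, which the paper itself does not reproduce (it simply cites Lemma 3.5 of the earlier work with Mirzaii). That argument runs the Lyndon/Hochschild--Serre spectral sequence of the split extension $N(A)\to B(A)\to T(A)$, which reduces to two rows since $H_q(A,\z)=0$ for $q\geq 2$ and degenerates by the splitting, and then kills $H_{\ast}(\aa,A)$ using the central unit $2$, which acts by $4$ with $4-1=3$ invertible; the same mechanism appears in the paper's proof of Proposition \ref{prp-hb-ht-am} via the coinvariants $A_m/(2^2-1)A_m\simeq\z/3$.
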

\begin{proof}
    See \cite[Lemma 3.5]{B-E--2023}.
\end{proof}
In the case of local rings (or fields) we have
\begin{prp}\label{ht-hb-local}
    Let $A$ be a local ring with maximal ideal $\mmm_A$.
    \begin{enumerate}
        \item If $A/\mmm_A$ is infinite, or if $|A/\mmm_A|=p^d$ with $(p-1)d>2(n+1)$, then
        \[
            H_n(B(A),\z)\simeq H_n(T(A),\z)
        \]
        \item If $A$ is a domain, and if $A/\mmm_A$ is infinite or $|A/\mmm_A|=p^d$ with $(p-1)d>2n$, then
        \[
            H_n(B(A),\z)\simeq H_n(T(A),\z)
        \] 
    \end{enumerate}
\end{prp}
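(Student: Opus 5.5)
Since $B(A)=N(A)\rtimes T(A)$ with $N(A)\simeq A$ and $T(A)\simeq\aa$ acting by $a\cdot b=a^2b$, I will use the Lyndon--Hochschild--Serre spectral sequence
\[
E^2_{p,q}=H_p(\aa,H_q(A,\z))\Longrightarrow H_{p+q}(B(A),\z).
\]
The splitting $T(A)\to B(A)$ identifies the bottom row $E^2_{p,0}=H_p(\aa,\z)$ with the summand $H_p(T(A),\z)$; this row survives to $E^\infty$. Thus the relative group $\SS_n=H_n(B(A),T(A);\z)$ is filtered by subquotients of $E^2_{p,q}$ with $p+q=n$ and $q\geq1$. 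It suffices to show $H_p(\aa,H_q(A,\z))=0$ for $1\le q\le n$ and all $p$. Since $H_q(A,\z)$ is the direct limit over finitely generated subgroups, one can also work with $\bigwedge^q_\z A$ and its torsion pieces (Breen/Eilenberg--MacLane description of the homology of an abelian group). The $\aa$-action on these is induced by multiplication by squares.

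The key tool is the center-kills (Suslin's) trick. If $\lambda\in\aa$ acts on $H_q(A,\z)$ by an automorphism $\phi_\lambda$, then $\lambda$ is central in $\aa$, so $\phi_\lambda$ induces the identity on $H_p(\aa,M)$. Hence $\phi_\lambda-1$ kills $H_p(\aa,H_q(A))$. On $\bigwedge^q A$ the element $\lambda=u$ acts by multiplication by $u^{2q}$; more precisely, $H_q(A,\z)$ has a filtration whose graded pieces are functors of $A$ on which $u$ acts by $u^{2k}$ with $k\le q$. So I need units $u$ with $u^{2k}-1\in\aa$ for all $1\le k\le q$. Then $\phi_u-1$ is invertible on each graded piece, hence on $H_q(A)$, and the $E^2$ terms vanish.

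To produce such $u$, I use the residue field $k=A/\mmm_A$. An element $u\in\aa$ has $u^{2k}-1\in\aa$ iff $\bar u^{2k}\neq1$ in $k$. If $k$ is infinite, such $\bar u$ obviously exists for any bounded $q$. If $|k|=p^d$, I need $\bar u\in k^\times$ whose order does not divide $2k$ for all $k\le q$. A generator of $k^\times$, of order $p^d-1$, works as soon as $p^d-1>2q$. For the torsion pieces in characteristic $p$ (e.g.\ $\tors$-type terms in $H_q(A)$ when $A$ has $p$-torsion), the acting scalars become $u^{2k}$ with $k$ up to roughly $q+1$ or involve Frobenius twists. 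This is where the bounds $(p-1)d>2(n+1)$ in (1) and $(p-1)d>2n$ in (2) should arise, via the standard estimate $p^d-1\ge(p-1)d$ used by Hutchinson. When $A$ is a domain of characteristic $0$, or torsion-free, $H_q(A)=\bigwedge^q A$ with only weights $2q$, which accounts for the weaker bound in (2).

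The main obstacle is the precise bookkeeping of the weights with which $\aa$ acts on $H_q(A,\z)$ for a general (non-torsion-free) local ring. I would first try to reduce, via direct limits and the Künneth formula, to the homology of finitely generated $\z[\aa]$-submodules. I would then invoke the known computation that $H_q$ of an $\F_p$-vector space is $\bigwedge^*\otimes\Gamma^*$ with divided powers in degree $2$ contributing weight $p$. If the exact constants prove delicate, I would simply cite Hutchinson's version of this vanishing (\cite{hutchinson2017}) for local rings and the domain refinement, checking that the hypotheses match.
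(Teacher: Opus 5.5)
The paper gives no argument of its own here: its proof is the single citation of \cite[Proposition 3.19]{hutchinson2017}, so the fallback in your last sentence is exactly what the paper does. Your reduction is also sound. The Lyndon--Hochschild--Serre spectral sequence for $B(A)=N(A)\rtimes T(A)$, the split bottom row surviving to $E^\infty$, the reduction to $H_p(\aa,H_q(A,\z))=0$ for $1\le q\le n$, and the centre-kills principle are all correct. However, the step meant to do the actual work is false, so the self-contained part of your proposal has a genuine gap.

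The graded pieces of $H_q(A,\z)$ (such as $\bigwedge^k_\z A$, or the $\tors$- and divided-power pieces) are functors of the \emph{abelian group} $A$, not $A$-modules. An element $u\in\aa$ acts through the functorial extension of the additive map $m_{u^2}\colon b\mapsto u^2b$, not as a scalar $u^{2k}$. So $u^{2k}-1\in\aa$ does not make $\phi_u-1$ invertible.

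Concretely, let $A=\F_9$, $n=q=2$, and let $\zeta$ generate $\F_9^\times$. Then $\zeta^2\neq 1$, $\zeta^4=-1\neq 1$ and $p^d-1=8>4$, so your criterion applies. Yet on $H_2(\F_9,\z)\cong\bigwedge^2_{\F_3}\F_9\cong\z/3$ the element $\zeta$ acts by $\det_{\F_3}(m_{\zeta^2})=N_{\F_9/\F_3}(\zeta)^2=1$, i.e.\ trivially. In fact $E^2_{0,2}=\z/3$ survives: the only possible incoming differentials come from $E^2_{2,1}=H_2(\z/8,\F_9)=0$ and from the split bottom row. Hence $H_2(B(\F_9),\z)$ contains $\z/3$ while $H_2(T(\F_9),\z)=0$, and your argument would prove a false statement. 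Note that $(p-1)d=4=2n$, which is exactly what the hypotheses exclude.

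The true mechanism is Frobenius twisting. Over $\bar{\F}_p$, $x\in\F_{p^d}^\times$ acts on $\F_{p^d}\otimes_{\F_p}\bar{\F}_p$ through the characters $x\mapsto x^{p^i}$, $0\le i<d$. On a piece of weight $k$ it therefore acts through characters $x\mapsto x^{2m}$ with $m=p^{i_1}+\cdots+p^{i_k}$. The base-$p$ digit sum of $2m$ is at most $2k$, while every positive multiple of $p^d-1$ has digit sum at least $(p-1)d$; this is the same digit-sum argument that gives Quillen's vanishing range for $\GL_n(\F_q)$. This, and not the inequality $p^d-1\ge (p-1)d$, is why the hypotheses are phrased via $(p-1)d$. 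For the same reason, your explanation of (2) through torsion-freeness misses characteristic-$p$ domains such as $\F_q$ itself.

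The infinite-residue-field case has the same defect. For $A=\q(\sqrt2)$ and $u=1+\sqrt2$ one has $u^2-1,\,u^4-1\in\aa$, but $u$ acts on $\bigwedge^2_\z A\cong\q$ by $N(u)^2=1$, i.e.\ trivially. In that example the integer unit $2$ (acting by $16$) rescues the vanishing. In general, though, one needs an argument that uses many units simultaneously rather than a single $u$, and that is what the cited result supplies.
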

\begin{proof}
    See \cite[Proposition 3.19]{hutchinson2017}.
\end{proof}
More generally, we have
\begin{prp}
    \label{ht-hb-semilocal}
    Let $A$ be a semilocal ring, and suppose that for every maximal ideal $\mmm\in\spcm(A)$, either $A/\mmm$ is infinite, or $|A/\mmm|=p^d$ with $(p-1)d>2(n+1)$ (respectively $(p-1)d>2n$ when $A$ is a domain). Then $H_k(T(A),\z)\simeq H_k(B(A),\z)$ for all $k\leq n$. 
\end{prp}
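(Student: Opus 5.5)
The plan is to reduce the semilocal statement to the local one (Proposition \ref{ht-hb-local}) by analysing the Lyndon--Hochschild--Serre spectral sequence of the split extension $0\to N(A)\to B(A)\to T(A)\to 0$:
\[
E^2_{p,q}=H_p(\aa,H_q(A,\z))\Longrightarrow H_{p+q}(B(A),\z).
\]
Here $\aa$ acts on $A$ by $a\cdot b=a^2b$. Because the extension splits, the row $q=0$ is $H_p(T(A),\z)$ and survives to $E^\infty$. So it suffices to show $E^2_{p,q}=0$ for $q\ge 1$ and $p+q\le n+1$. This gives $\SS_k=0$ for $k\le n$, and the vanishing for $p+q=n+1$ ensures no differential hits the bottom row in total degree $\le n$; in fact the splitting already handles that.

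First I would reduce to $\q$-homology versus torsion. The groups $H_q(A,\z)$ are $\bigwedge^q_\z A$ plus torsion terms built from $A$ (for instance via the Künneth / Breen description of $H_\bullet$ of an abelian group). The standard trick, used in \cite{hutchinson2017}, is the center-kills argument. If some $u\in\aa$ acts on a $\z[\aa]$-module $M$ by multiplication by an element $c$ such that $c-1$ is invertible on $M$, then $H_p(\aa,M)=0$ for all $p$. Indeed, $u$ is central, so it acts trivially on $H_p(\aa,M)$, yet it also acts as $c$.

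For $M$ a subquotient of $H_q(A,\z)$, the unit $u$ acts through $u^2$ via the multiplicative structure on $A$. Thus on the $\bigwedge^q$-type pieces it acts by scalars that are products of $q$ values of $u^2$ (for torsion pieces, by appropriate polynomial expressions of degree $\le$ about $2q$). I would therefore seek a single unit $u$ such that $u^{2j}-1\in\aa$ for all $1\le j\le$ a bound depending on $n$. Then each relevant $M$, being an $A$-module-like object on which these elements act invertibly, has vanishing homology.

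To find such $u$, I would use the Chinese remainder theorem. Since $A$ is semilocal, $A\to\prod_{\mmm}A/\mmm$ is surjective, and an element is a unit iff its image in each residue field is nonzero. For each maximal ideal I would pick $\bar u_\mmm\in(A/\mmm)^\times$ with $\bar u_\mmm^{2j}\ne1$ for all $j$ in the required range. This is possible if $A/\mmm$ is infinite. If $|A/\mmm|=p^d$, it is possible exactly when the multiplicative group has an element of order exceeding twice the bound, which the hypothesis $(p-1)d>2(n+1)$ (respectively $2n$ for domains) is designed to guarantee, just as in the local case. I would then lift the tuple $(\bar u_\mmm)$ to $u\in A$; it is automatically a unit.

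The main obstacle is bookkeeping: checking that the integral homology $H_q(A,\z)$ for $q\le n+1$ (not only its rational part) admits a filtration by $\z[\aa]$-modules on which $u$ acts by $A$-scalars of the controlled form. This is where the exact constants $2(n+1)$ versus $2n$ enter, the domain case avoiding the extra $p$-torsion contributions. Here I would simply follow the filtration used in the proof of \cite[Proposition 3.19]{hutchinson2017} verbatim, observing that it only uses the existence of a suitable unit $u$ and never that $A$ is local; the semilocal CRT construction above supplies that unit.
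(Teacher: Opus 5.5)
The paper gives no proof of this proposition. It is stated as the semilocal extension of Hutchinson's local result (Proposition~\ref{ht-hb-local}), so the intended argument is your outer frame: the Lyndon--Hochschild--Serre reduction, plus the observation that locality is only used to produce units with prescribed residues, which the Chinese remainder theorem does for semilocal rings. For the reduction you only need $E^2_{p,q}=0$ for $q\ge1$, $p+q\le n$.

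\textbf{The gap.} The middle step, which carries all the content, fails. A single unit $u$ with $u^{2j}-1\in\aa$ does \emph{not} act on graded pieces of $H_q(A,\z)$ by scalars $c$ with $c-1$ invertible. On $\bigwedge^q_\z A\subseteq H_q(A,\z)$, $u$ acts by $\bigwedge^q(\mu_{u^2})$, where $\mu_{u^2}$ is multiplication by $u^2$. This is not multiplication by $u^{2q}$, nor by any element of $A$: for $A=\q(\sqrt2)$ one has $\bigwedge^2_\z A\simeq\q$, and $u$ acts by $N(u)^2$.

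\emph{An infinite counterexample.} Take $A=\F_3(t)$, which satisfies the hypotheses for every $n$. No unit works on $H_2(A,\z)=\bigwedge^2_{\F_3}A$. If $u\in\F_3^\times$, the action is trivial. Otherwise, identify $\bigwedge^2_{\F_3}A$ with the antisymmetric tensors in the domain $A\otimes_{\F_3}A$. Then $\bigwedge^2(\mu_{u^2})-\id$ is multiplication by $u_1^2u_2^2-1$, where $u_1=u\otimes1$ and $u_2=1\otimes u$. This element is not a unit there and does not divide $t\otimes1-1\otimes t$. By the five lemma, no finite filtration can repair this.

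\emph{The finite case.} Here the defect shows in the numerics. Your criterion only uses $p^d-1>2J$. But the eigenvalues of $u$ on $H_\ast(\F_{p^d},\F_p)\otimes\overline{\F}_p$ are $u^{2w}$ with $w=p^{j_1}+\dots+p^{j_k}$, i.e.\ products of Frobenius conjugates; this is why the hypothesis is phrased via $(p-1)d$. Your mechanism would give $H_k(B(\F_{2^{10}}),\z)\simeq H_k(T(\F_{2^{10}}),\z)$ for $k$ in the hundreds. Yet the norm form $\prod_{j=0}^{9}x^{2^j}$ is a nonzero $T$-invariant class in $H^{10}(\F_2^{10},\F_2)$. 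Since $|T|$ is odd, this forces $H_9$ or $H_{10}$ of $B$ and $T$ to differ.

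Deferring to ``Hutchinson's filtration verbatim'' does not close the gap, because the claim that the local proof needs only one such $u$ is exactly what fails.

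\textbf{A repair.} Use a many-units argument of Nesterenko--Suslin type.
\begin{itemize}
\item The map $x\mapsto H_q(\mu_{x^2})$, from $A$ to $\mathrm{End}(H_q(A,\z))$, is polynomial of some degree $D$ bounded in terms of $q$ (e.g.\ $D=2q$ on $\bigwedge^q_\z A$), and it vanishes at $0$.
\item Vanishing of its $(D+1)$-st cross effect gives $\sum_{\emptyset\ne I\subseteq\{1,\dots,D+1\}}(-1)^{|I|}H_q(\mu_{a_I^2})=0$, with $a_I=\sum_{i\in I}a_i$.
\item If every $a_I$ is a unit, each term induces the identity on $H_\ast(\aa,H_q(A,\z))$ (center kills). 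Hence $-\id=0$ there, so this homology vanishes.
\end{itemize}
So you need $D+1$ units all of whose nonempty partial sums are units, i.e.\ a zero-sum-free sequence of length $D+1$ in each $(A/\mathfrak m,+)$. Such a sequence exists if $A/\mathfrak m$ is infinite. For $A/\mathfrak m\simeq\F_{p^d}$ it exists as soon as $D+1\le(p-1)d$: take $p-1$ copies of each vector of an $\F_p$-basis. This is how bounds like $(p-1)d>2n$ arise.

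For semilocal $A$, choose such a sequence in every residue field and lift them all simultaneously through the surjection $A\to\prod_{\mathfrak m}A/\mathfrak m$. Each partial sum is then a unit, because it lies in no maximal ideal. This is the correct form of your CRT step; with it, the rest of your outline goes through.
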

\begin{exm}\label{exa-hb-ht}
    \begin{enumerate}
        \item For the ring $\z$, by Lemma \ref{ht-hb-l1}, $\SS_2=H_1(\{\pm 1\},\z)=\z/2$, $\SS_3=H_2(\{\pm 1\},\z)=0$.
        \item For a finite field $\F_q$ with $q=p^d$, if $(p-1)d>6$, then Proposition \ref{ht-hb-local} gives
        \[
            H_i(B(\F_q),\z)\simeq H_i(T(\F_q),\z)
        \]
        for $i=1,2,3$.
        \item For the finite local rings $\z/p^r$, the residue field is $\z/p$; hence, if $p>9$, Proposition \ref{ht-hb-local} gives
        \[
            H_i(B(\z/p^r),\z)\simeq H_i(T(\z/p^r),\z)
        \]
        for $i=1,2,3$.
        \item For semilocal rings $\z/m$ where $m=\prod p_i^{t_i}$ with $p_i>9$, we have
        \[
            H_i(B(\z/m),\z)\simeq H_i(T(\z/m),\z)
        \]
        for $i=1,2,3$.
    \end{enumerate}
\end{exm}
For $m\in\N$, we denote by $A_m$ the subring $\z\left[\frac{1}{m}\right]$ of $\q$.
\begin{lem}
    Let $p$ be a prime number. Then
    \begin{enumerate}
        \item $H_1(B(A_p),\z)\simeq H_1(T(A_p),\z)\oplus\z/(p^2-1)$.
        \item For any $n\geq 2$, $H_n(B(A_2),\z)\simeq H_n(T(A_2),\z)$.
        \item For $p\neq 2$ and $n\geq 2$, we have $H_n(B(A_p),\z)\simeq H_n(T(A_p),\z)\oplus\z/2$.
    \end{enumerate}
\end{lem}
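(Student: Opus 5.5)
The plan is to reduce everything to a computation of group homology with twisted coefficients. Since $A_p=\z\mth{p}$ is a subring of $\q$, Lemma \ref{ht-hb-l1} gives
\[
H_n(B(A_p),\z)\simeq H_n(T(A_p),\z)\oplus H_{n-1}(A_p^\times,A_p),
\]
where $A_p^\times$ acts on $A_p$ by $a\cdot b=a^2b$. So it suffices to show three things: $H_0(A_p^\times,A_p)\simeq\z/(p^2-1)$; $H_k(A_p^\times,A_2)=0$ for $k\ge1$ when $p=2$; and $H_k(A_p^\times,A_p)\simeq\z/2$ for $k\geq 1$ when $p$ is odd.

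First I write $A_p^\times=\{\pm1\}\times\lan p\ran\simeq C_2\times\z$. Here $-1$ acts trivially on $M:=A_p$, since $(-1)^2=1$, and the generator $p$ acts by multiplication by $p^2$. Because the $C_2$-factor acts trivially and is a direct factor, I first compute $H_q(C_2,M)$ with trivial coefficients:
\[
H_0(C_2,M)=M,\qquad H_{\mathrm{odd}}(C_2,M)=M/2M,\qquad H_{\mathrm{even}>0}(C_2,M)=M[2]=0,
\]
the last because $M$ is torsion free. Here $M/2M=0$ if $p=2$, and $M/2M=\z/2$ if $p$ is odd.

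Next I take homology over the infinite cyclic factor. Since $\z$ has cohomological dimension one, the Lyndon–Hochschild–Serre spectral sequence for $C_2\times\z\to\z$ has only two columns. This gives short exact sequences, which I expect to split, though only orders matter here:
\[
0\to H_0(\z,H_k(C_2,M))\to H_k(A_p^\times,M)\to H_1(\z,H_{k-1}(C_2,M))\to0 .
\]
For $N=M$ with $p$ acting by $p^2$:
\begin{itemize}
\item $H_0(\z,M)=M/(p^2-1)M\simeq\z/(p^2-1)$, since $p$ is invertible modulo $p^2-1$.
\item $H_1(\z,M)=\{b: p^2b=b\}=0$.
\end{itemize}
On $\z/2$ the action is trivial, so $H_0(\z,\z/2)=H_1(\z,\z/2)=\z/2$.

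Assembling these:
\begin{itemize}
\item For $k=0$ we get $\z/(p^2-1)$, which gives item (1) with $n=1$.
\item For $p=2$ all $H_q(C_2,M)$ with $q\geq1$ vanish, and $H_1(\z,M)=0$, so $H_k=0$ for $k\geq1$. This is item (2).
\item For $p$ odd and $k$ odd, the left term is $\z/2$ and the right term is $H_1(\z,H_{k-1}(C_2,M))$. This is $H_1(\z,M)=0$ if $k=1$, and $0$ otherwise.
\item For $p$ odd and $k\geq2$ even, the left term vanishes and the right term is $H_1(\z,\z/2)=\z/2$.
\end{itemize}
Hence $H_{n-1}(A_p^\times,A_p)\simeq\z/2$ for all $n\geq2$, which is item (3).

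The only delicate step is to justify the two-column collapse and to handle the mixed action correctly, namely trivial on $\pm1$ and $p^2$-multiplication on $p$. Alternatively, one can use the Künneth formula for $\z[C_2\times\z]$ with the module viewed as a tensor product, or compute with the explicit resolution $\z[t^{\pm1}]\xrightarrow{t-1}\z[t^{\pm1}]$ tensored with the periodic resolution of $C_2$. I would use the latter to make the total complex explicit if needed.
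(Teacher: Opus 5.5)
Your proposal is correct and follows essentially the argument the paper relies on. The paper cites \cite[Lemma 3.7]{B-E--2023} and reuses the same technique in Proposition \ref{prp-hb-ht-am}: reduce via Lemma \ref{ht-hb-l1} to $H_{n-1}(A_p^\times,A_p)$ with the action $a\cdot b=a^2b$, then run Lyndon/Hochschild--Serre spectral sequences for the split decomposition $A_p^\times=\{\pm1\}\times\lan p\ran$. The only difference is the order of the factors. Taking $\lan p\ran$ first, as the paper does with $\lan 2\ran$, gives $H_q(\lan p\ran,A_p)=0$ for $q\geq 1$, hence directly $H_k(A_p^\times,A_p)\simeq H_k(\{\pm1\},\z/(p^2-1))$. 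Your order instead needs the two-column collapse, which you handle correctly.
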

\begin{proof}
    See \cite[Lemma 3.7]{B-E--2023}.
\end{proof}
A similar technique extends the above result to the case where $m$ has two or more prime factors; we treat the cases in which $2\in\aa$ but $3\notin\aa$.
\begin{prp}\label{prp-hb-ht-am}
    Let $m\in\N$, $r\geq 1$ and $m=2^\alpha\cdot p_1^{\alpha_1}\cdots p_r^{\alpha_r}$ with $p_i\neq 3$ for $i\leq r$.
    \begin{enumerate}
        \item $H_1(B(A_m),\z)\simeq H_1(T(A_m),\z)\oplus \z/3$.
        \item $H_2(B(A_m),\z)\simeq H_2(T(A_m),\z)\oplus(\z/3)^r$.
        \item $H_3(B(A_m),\z)\simeq H_3(T(A_m),\z)\oplus(\z/3)^{\frac{(r-1)r}{2}}$.
        \item If $r=1$ then $H_n(B(A_m),\z)\simeq H_n(T(A_m),\z)$ for any $n\geq 3$.
    \end{enumerate}
\end{prp}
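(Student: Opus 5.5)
The plan is to reduce everything to the computation of $H_{n-1}(\aa,A)$ with $A=A_m$, using Lemma \ref{ht-hb-l1}. Since $A_m$ is a subring of $\q$, that lemma gives
\[
H_n(B(A_m),\z)\simeq H_n(T(A_m),\z)\oplus H_{n-1}(A_m^\times,A_m),
\]
where $A_m^\times\simeq T(A_m)$ acts on $A_m\simeq N(A_m)$ by $a\cdot b=a^2b$. This is the Lyndon–Hochschild–Serre computation for the split extension: $N(A_m)$ is a rank one torsion-free group, so $H_q(N,\z)=0$ for $q\geq 2$. So it remains to show
\[
H_k(A_m^\times,A_m)\simeq(\z/3)^{\binom{r}{k}}\quad\text{for all } k\geq 0,
\]
and then read off the cases $n=1,2,3$ and ($r=1$, $n\geq3$).

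Write $A_m^\times=\{\pm1\}\times L$, where $L\simeq\z^{r+1}$ is free on $2,p_1,\dots,p_r$. First I compute $H_*(L,A_m)$. For a free abelian group acting on a module, homology is the homology of the Koszul complex $\bigwedge^\bullet\z^{r+1}\otimes A_m$ on the commuting operators $t_j-1$. Here $t_0=2$ acts as multiplication by $4$ and $t_i=p_i$ acts as multiplication by $p_i^2$. So $H_*(L,A_m)$ is the Koszul homology $H_*(K(x_0,\dots,x_r;A_m))$ with $x_0=3$ and $x_i=p_i^2-1$.

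The key observation is that $3\mid p_i^2-1$, because $p_i\neq 3$; for $p_i=2$ this still holds, and $2$ is anyway invertible. Hence $x_i=c_ix_0$ with $c_i\in\z$. Koszul complexes are invariant under the elementary change of generators $x_i\mapsto x_i-c_ix_0$, so
\[
K(x_0,\dots,x_r;A_m)\simeq K(3;A_m)\otimes_{\z}\textstyle\bigwedge^\bullet\z^{r},
\]
where the second factor has zero differential. Since $3$ is a nonzerodivisor and not a unit in $A_m$ (because $3\nmid m$), $K(3;A_m)$ has $H_0=A_m/3\simeq\z/3$ and $H_1=0$. This yields $H_k(L,A_m)\simeq(\z/3)^{\binom{r}{k}}$.

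Next I bring in the factor $\{\pm1\}$. The element $-1$ acts trivially on $A_m$, since $(-1)^2=1$, and it commutes with $L$. By Hochschild–Serre for $L\to A_m^\times\to\{\pm1\}$, or by Künneth, the $E^2$-terms are $H_j(\{\pm1\},H_k(L,A_m))$. These are $H_j(\z/2,(\z/3)^{\binom rk})$ with trivial action, which vanish for $j>0$ because $2$ is invertible on $\z/3$-modules. So the spectral sequence collapses to $H_k(A_m^\times,A_m)\simeq(\z/3)^{\binom rk}$.

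Substituting $k=n-1$ gives $\z/3$ for $n=1$, $(\z/3)^r$ for $n=2$, and $(\z/3)^{r(r-1)/2}$ for $n=3$. For $r=1$ and $n\geq3$ we get $\binom{1}{n-1}=0$, which proves (4).

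The main step to get right is the identification of $H_*(L,A_m)$ with Koszul homology and the change-of-generators invariance. This is the standard Koszul resolution of $\z$ over $\z[L]$, tensored with $A_m$. The divisibility $3\mid p_i^2-1$ is exactly where the hypothesis $p_i\neq3$ is used.
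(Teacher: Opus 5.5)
Your argument is correct, and its skeleton matches the paper's. You reduce via Lemma \ref{ht-hb-l1} to $H_{n-1}(A_m^\times,A_m)$, discard $\{\pm1\}$ because $2$ is invertible, and identify the rest with $(\z/3)^{\binom{r}{k}}$.

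Where you differ is the computation of $H_*(L,A_m)$. The paper splits $L=\lan 2\ran\times\lan p_1,\dots,p_r\ran$ and runs a second Hochschild--Serre spectral sequence. There $H_q(\lan 2\ran,A_m)$ is $A_m/3A_m\simeq\z/3$ for $q=0$ and zero for $q\geq1$. Everything therefore collapses to $H_p(\lan p_1,\dots,p_r\ran,\z/3)$ with trivial action, which is where $p_i^2\equiv 1 \pmod 3$ enters. That group is then evaluated by universal coefficients and K\"unneth.

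You instead write the Koszul complex $K(3,p_1^2-1,\dots,p_r^2-1;A_m)$ and use the unimodular change of generators to split it as $K(3;A_m)\otimes\bigwedge^\bullet\z^r$. This change of generators is essentially the chain-level form of the paper's observation that the $p_i$ act trivially on $\z/3$. It has two advantages: it gives the closed formula $H_k(A_m^\times,A_m)\simeq(\z/3)^{\binom rk}$ for every $k$ at once, and it avoids the paper's case split $r>1$ versus $r=1$. The paper's route has the advantage of staying in the spectral-sequence language used throughout.

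You also remove $\{\pm1\}$ last, acting on $\z/3$-modules, whereas the paper removes it first using that $A_m$ is uniquely $2$-divisible. Both are fine.

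One small remark: taking $L$ free on $2,p_1,\dots,p_r$ tacitly uses $\alpha\geq1$ and $p_i\neq 2$. The paper assumes the same, since for $\alpha=0$ the answer changes (e.g.\ $\z/(p^2-1)$ for $A_p$). Your aside about $p_i=2$ is therefore unnecessary.
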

\begin{proof}
    By Lemma \ref{ht-hb-l1}, it suffices to compute $H_{k}(A_m^\times,A_m)$ for $k=0,1,2$. Since $A_m^\times=\lan-1,p_1,\dots,p_r\ran$, consider the split extension
    \[
        \begin{tikzcd}
            0\to \lan 2,p_1,\dots,p_r\ran\ar[r] & A_m^\times \ar[r] & \mu_2(A_m)\ar[r] & 0 
        \end{tikzcd}
    \]
    this yields a Lyndon/Hochschild-Serre spectral sequence
    \[
        \prescript{I}{}{E}^2_{p,q}=H_p(\lan 2,p_1,\dots,p_r\ran,H_q(\mu_2(A_m),A_m))\implies H_{p+q}(A_m^\times,A_m).
    \]
    
    Since $\mu_2(A_m)$ acts trivially on $A_m$, we have $H_0(\mu_2(A_m),A_m)=A_m$, and the $2$-divisibility of $A_m$ implies that $H_q(\mu_2(A_m),A_m)=0$ for $q\geq 1$. Hence, by the convergence of the spectral sequence, $H_p(A_m^\times,A_m)\simeq\prescript{I}{}{E}^2_{p,0}=H_p(\lan 2,p_1,\dots,p_r\ran,A_m)$.

    Consider now the split extension
    \[
        \begin{tikzcd}
            0\to \lan p_1,\dots,p_r\ran\ar[r] & \lan2,p_1,\dots,p_r\ran \ar[r] & \lan 2\ran\ar[r] & 0 
        \end{tikzcd}
    \]
    this yields a spectral sequence
    \[
        \prescript{II}{}{E}^2_{p,q}=H_p(\lan p_1,\dots,p_r\ran,H_q(\lan 2\ran,A_m))\implies H_{p+q}(\lan2,p_1,\dots,p_r\ran,A_m).
    \]
    
    Now, $H_0(\lan 2\ran,A_m)\simeq A_m/(2^2-1)\simeq\z/3$ because $3\nmid m$. Since $\lan 2\ran$ is an infinite cyclic group and $A_m$ is torsion free, we have $H_q(\lan 2\ran,A_m)=0$ for $q\geq 1$. Hence, by the convergence of this second spectral sequence, $H_p(A_m^\times,A_m)\simeq H_p(\lan 2,p_1,\dots,p_r\ran,A_m)\simeq\prescript{II}{}{E}^\infty_{p,0}=\prescript{II}{}{E}^2_{p,0}=H_p(\lan p_1,\dots,p_r\ran,\z/3)$.
    
    Finally, $\prescript{II}{}{E}^2_{p,0}=H_p(\lan p_1,\dots,p_r\ran,\z/3)\simeq H_p(\lan p_1,\dots,p_r\ran,\z)\otimes \z/3$, because $\lan p_1,\dots,p_r\ran$ acts trivially on $\z/3$ (here we also use the universal coefficient theorem \cite[III.1, Exercise 3]{brown1994}). For $p=0$ we obtain $\prescript{II}{}{E}^2_{0,0}=\z/3$, which proves (1). For $p\geq 1$ we distinguish two cases.
    \begin{enumerate}
        \item Suppose $r>1$. For $p=1$ we have $\prescript{II}{}{E}^2_{1,0}=\z^r\otimes(\z/3)\simeq\left(\z/3\right)^r$, and for $p=2$ we have $\prescript{II}{}{E}^2_{2,0}=\z^{\frac{(r-1)r}{2}}\otimes(\z/3)\simeq\left(\z/3\right)^{\frac{(r-1)r}{2}}$ (by the K\"unneth formula); this proves (2) and (3).
        \item Suppose $r=1$. The group $\lan p_1\ran$ is infinite cyclic, so $\prescript{II}{}{E}^2_{p,0}=0$ for $p\geq 2$, while for $p=1$ we have $\prescript{II}{}{E}^2_{1,0}=\z\otimes\z/3\simeq\z/3$; this proves (4).
    \end{enumerate}
\end{proof}
\section{The homology \texorpdfstring{$H_3(\SL_2(A),\z)$ and $H_3(\SL_2(A),\SM_2(A);\z)$}{H3(SL2(A),z) and H3(SL2(A),SM2(A);z)}}
Let $A$ be a ring and consider the following commutative diagram with exact rows.
\begin{equation}
    \begin{tikzcd}
        & \hat{E}^2_{2,1}\ar[r,"\alpha"]\ar[d,twoheadrightarrow,"\hat{d}^2_{2,1}"] & E^2_{2,1}\ar[r]\ar[d,"d^2_{2,1}"] & \coker(\alpha) \ar[r] \ar[d,"d'"] & 0\\
        0 \ar[r] & \{\pm 1\}\wedge\aa\ar[r] & \aa\wedge\aa\oplus\SS_2 \ar[r,twoheadrightarrow] & \frac{\aa\wedge\aa}{\aa\wedge\{\pm 1\}}\oplus\SS_2 \ar[r]& 0 
    \end{tikzcd}
\end{equation}

Here $d'$ is induced by $d^2_{2,1}$. Setting $\widetilde{\coker(\alpha)}:=\ker(d')$, the snake lemma yields the exact sequence
\begin{equation}
    \label{alph-inf}
    \begin{tikzcd}
        \hat{E}^{\infty}_{2,1}\ar[r,"\alpha"] & E^{\infty}_{2,1}\ar[r] & \widetilde{\coker(\alpha)} \ar[r] & 0
    \end{tikzcd}
\end{equation}

The next proposition relates the cokernel of $\alpha$ to the cokernel of the map induced by the inclusion $H_3(\SM_2(A),\z)\to H_3(\SL_2(A),\z)$. 
\begin{prp}\label{prp-sm2-sl2}
Let $A$ be a ring satisfying the following conditions:
\begin{enumerate}
    \item The complex $X_{\bullet}(A^2)\to\z$ is exact in dimension $<2$.
    \item $H_3(B(A),\z)\simeq H_3(T(A),\z)$.
\end{enumerate}
Then there is an exact sequence
\[
H_3(\SM_2(A),\z)\to H_3(\SL_2(A),\z)\to \widetilde{\coker(\alpha)}\to 0.
\]
\end{prp}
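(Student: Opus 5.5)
We compare the two spectral sequences of diagram \eqref{map-s-seq}, both of which converge to $H_{*}(\SM_2(A),\z)$ and $H_{*}(\SL_2(A),\z)$ by hypothesis (1). The idea is that, in total degree $3$, the only place where $H_3(\SL_2(A),\z)$ differs from the image of $H_3(\SM_2(A),\z)$ is the term $E^\infty_{2,1}$, modulo the image of $\hat{E}^\infty_{2,1}$. The cokernel of that map is $\widetilde{\coker(\alpha)}$ by \eqref{alph-inf}.

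\textbf{Step 1: the lower row of the filtration.} Since $X_\bullet$ is only used in degrees $\le 2$ (the $E^1$-terms with $p\le 2$), the total-degree-$3$ part of the $\SL_2$-sequence has $E^\infty_{0,3}$, $E^\infty_{1,2}$ and $E^\infty_{2,1}$ (and $E^\infty_{3,0}$ if the truncated complex with $Z_1$ in degree $2$ is used). The $\SM_2$-sequence has the corresponding $\hat{E}^\infty_{p,q}$.

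First I check that $\hat{E}^\infty_{0,3}\to E^\infty_{0,3}$ is surjective. We have $\hat{E}^1_{0,3}=H_3(T(A),\z)$ and $E^1_{0,3}=H_3(B(A),\z)$, and these agree by hypothesis (2). So $E^\infty_{0,3}$ is a quotient of $H_3(T(A),\z)$ and lies in the image.

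Next, $E^1_{1,2}=H_2(T(A),\z)=\hat{E}^1_{1,2}$ and the map between them is the identity. So $\hat{E}^\infty_{1,2}\to E^\infty_{1,2}$ should be surjective, provided the incoming differentials into the $\SL_2$ term do not shrink it in a way the $\SM_2$ side does not match. Surjectivity survives because both are subquotients of the same group: a kernel can only be larger on the $\SL_2$ side if $d^1_{1,2}$ differs, and both are $(w-\id)_*$ to $H_2(T)$ versus $H_2(B)=H_2(T)\oplus\SS_2$. The $\SS_2$ component is zero, since the map factors through $T$.

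If there is an $E^\infty_{3,0}$ term, it should vanish: in the truncation with $Z_1$ at $p=2$, the $E^1_{3,q}$ column is zero. So the relevant $E^1_{2,q}$ is $H_q(\SL_2,Z_1)$, which is consistent with how $E^2_{2,1}$ was used above.

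\textbf{Step 2: a five-lemma style chase on the filtrations.} This gives an exact sequence
\[
H_3(\SM_2(A),\z)\to H_3(\SL_2(A),\z)\to \coker\big(\hat{E}^\infty_{2,1}\to E^\infty_{2,1}\big)\to 0,
\]
using the filtration $0\subseteq F_0\subseteq F_1\subseteq F_2=H_3$ on both sides and surjectivity on the lower graded pieces.

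\textbf{Step 3: identify $E^\infty_{2,1}$.} We have $E^\infty_{2,1}=\ker(d^2_{2,1})$, since $d^3_{2,1}$ lands in $E_{-1,3}=0$ and there is nothing incoming once $p\le 2$. Likewise $\hat{E}^\infty_{2,1}=\ker\hat{d}^2_{2,1}$. The snake lemma applied to the displayed diagram then gives \eqref{alph-inf}; this needs $\hat{d}^2_{2,1}$ to be onto $\{\pm1\}\wedge\aa$, which follows from Lemma \ref{alpha-gam-chi}(5). Hence the cokernel is $\widetilde{\coker(\alpha)}$.

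\textbf{Main obstacle.} The delicate step is Step 1, specifically checking that the differentials hitting $E_{0,3}$ and $E_{1,2}$ on the $\SL_2$ side (from $E^2_{2,1}$ via $d^2$, and from $E^1_{1,2}$) do not leave classes outside the image. Here I would use that $d^2_{2,1}$ takes values in $\aa\wedge\aa\oplus\SS_2$ in degree $2$, not in degree $3$, so these differentials are irrelevant for total degree $3$. Hypothesis (2) then handles the bottom row.
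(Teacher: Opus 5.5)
Your proposal is correct and is essentially the paper's argument. You get surjectivity of $\hat F_1\to F_1$ from the graded pieces $E^\infty_{0,3}$ (a quotient of $H_3(B(A),\z)=H_3(T(A),\z)$) and $E^\infty_{1,2}$ (a quotient of $H_2(T(A),\z)$, since $d^1_{1,2}$ and $\hat d^1_{1,2}$ have the same kernel), then apply the snake lemma on the top step of the filtration and the identification \eqref{alph-inf}.

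One correction to your closing remark. The differentials entering $E_{0,3}$ and $E_{1,2}$ are $d^1_{1,3}$, $d^2_{2,2}$ and $d^1_{2,2}$, not $d^2_{2,1}$, which goes from total degree $3$ to total degree $2$. Incoming differentials only pass to quotients, so they can never obstruct surjectivity; the only thing that needed checking was the outgoing $d^1_{1,2}$, which you did.
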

\begin{proof}
    The proof is the same as that of Theorem 6.6 in \cite{B-E--2023}. Analyzing the map \eqref{map-s-seq} of spectral sequences $\hat{E}^1_{p,q}\overset{\inc}{\longrightarrow} E^1_{p,q}$, we obtain filtrations (the equalities on the right follow from $\hat{E}^{\infty}_{3,0}=E^{\infty}_{3,0}=0$)
    \[
    \begin{tikzcd}
        0\ar[r,hook] & \hat{F}_0 \ar[r,hook]\ar[d] & \hat{F}_1 \ar[r,hook]\ar[d] & \hat{F}_2 \ar[r,equal]\ar[d] & \hat{F}_3 = H_3(\SM_2(A),\z)\ar[d]\\
        0\ar[r,hook] & F_0 \ar[r,hook] & F_1 \ar[r,hook] & F_2 \ar[r,equal] & F_3 = H_3(\SL_2(A),\z)
    \end{tikzcd}
    \]
    This yields commutative diagrams with exact rows
    \begin{equation}
    \label{rel_2-1}
    \begin{tikzcd}
        0 \ar[r] & \hat{F}_1 \ar[r]\ar[d] & H_3(\SM_2(A),\z)\ar[r]\ar[d] & \hat{E}^{\infty}_{2,1}\ar[r]\ar[d] & 0\\
        0 \ar[r] & F_1 \ar[r] & H_3(\SL_2(A),\z)\ar[r] & E^{\infty}_{2,1}\ar[r] & 0.\\
    \end{tikzcd}    
    \end{equation}
    and
    \begin{equation}
    \label{rel_1-2}
    \begin{tikzcd}
        0 \ar[r] & \hat{E}^{\infty}_{0,3}=\hat{F}_0 \ar[r]\ar[d] & \hat{F}_1 \ar[r]\ar[d] & \hat{E}^{\infty}_{1,2}\ar[r]\ar[d] & 0\\
        0 \ar[r] & E^{\infty}_{0,3}=F_0 \ar[r] & F_1 \ar[r] & E^{\infty}_{1,2}\ar[r] & 0.\\
    \end{tikzcd}    
    \end{equation}
    Applying the snake lemma to diagram \eqref{rel_2-1}, we see that it suffices to prove that the map $\hat{F}_1\to F_1$ is surjective. To this end, observe that in diagram \eqref{rel_1-2} the groups $\hat{E}^{\infty}_{1,2}$ and $E^{\infty}_{1,2}$ are quotients of $H_2(T(A),\z)$, which implies that the right-hand vertical map is surjective. Moreover, the condition $H_3(B(A),\z)\simeq H_3(T(A),\z)$ implies, by the same argument, that the left-hand vertical map is surjective. Therefore $\hat{F}_1\to F_1$ is surjective.
\end{proof}
The next lemma analyzes $\coker(\alpha)$ and provides weaker conditions under which an exact sequence as in \cite[Theorem 6.6]{B-E--2023} holds. Consider the following completion of diagram \eqref{dia-gam-alpha}.
\begin{equation}\label{dia-gam-alpha-2}
\begin{tikzcd}
& & \II_A\otimes\mu_2(A)\ar[d,"\gamma"] & & \\
& \hat{E}^2_{2,1}\simeq\GG_A \ar[r,"\alpha:=\inc_*"]\ar[d,twoheadrightarrow,"\delta'':=\delta\circ\alpha"] & E^2_{2,1} \ar[r,"\pi"]\ar[d,twoheadrightarrow,"\delta"] & \coker(\alpha)\ar[r]\ar[d,twoheadrightarrow,"\delta'"] & 0\\
0 \ar[r] & \II_A\psi_1(-1)\ar[r] & \RP_1(A)\ar[r] & \frac{\RP_1(A)}{\II_A\psi_1(-1)}\ar[r] & 0
\end{tikzcd}
\end{equation}
The surjectivity of the left-hand vertical map follows from Lemma \ref{alpha-gam-chi} (4), and the surjectivity of the right-hand vertical map follows from the five lemma.
\begin{lem}\label{thm-main}
    Let $A$ be a ring such that $X_{\bullet}(A^2)\to\z$ is exact in dimension $<2$, and consider the maps of diagram \eqref{dia-gam-alpha-2}. Then:
    \begin{enumerate}
        \item There exists an exact sequence
        \[
            \II_A\otimes\mu_2(A)\to\coker(\alpha)\to\frac{\RP_1(A)}{\II_A\psi_1(-1)}\to 0
        \]
        \item $\im(\gamma)\subseteq\im(\alpha)$ if and only if $\delta'$ is an isomorphism.
        \item $\im(\alpha)\subseteq\im(\gamma)$ if and only if $\II_A\psi_1(-1)=0$.
        \item If $A$ is a domain, then $\im(\gamma)\subseteq\im(\alpha)$ if and only if $\im(\alpha)\subseteq\im(\gamma)$ and $\II_A^2\otimes\mu_2(A)\subseteq\ker(\gamma)$.
    \end{enumerate}
\end{lem}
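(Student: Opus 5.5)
The plan is to read everything off diagram \eqref{dia-gam-alpha-2}, using two facts. First, by \eqref{d-gam-seq}, $\delta$ is surjective with $\ker\delta=\im\gamma$. Second, by Lemma \ref{alpha-gam-chi}(4), $\delta(\im\alpha)=\II_A\psi_1(-1)$.

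For (1) I would do a direct diagram chase.
\begin{itemize}
\item The map $\delta'$ is well defined because $\delta(\im\alpha)=\II_A\psi_1(-1)$, and it is surjective because $\delta$ is.
\item Let $\pi(x)\in\ker\delta'$. Then $\delta(x)\in\II_A\psi_1(-1)=\delta(\im\alpha)$, so $\delta(x)=\delta(\alpha(g))$ for some $g\in\GG_A$.
\item Hence $x-\alpha(g)\in\ker\delta=\im\gamma$, and so $\ker\delta'=\pi(\im\gamma)$.
\end{itemize}
This gives the exact sequence in (1), with first map $\pi\circ\gamma$.

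Items (2) and (3) then follow directly.
\begin{itemize}
\item For (2): $\delta'$ is an isomorphism iff $\pi\circ\gamma=0$, i.e. iff $\im\gamma\subseteq\im\alpha$.
\item For (3): $\II_A\psi_1(-1)=\delta(\im\alpha)$ vanishes iff $\im\alpha\subseteq\ker\delta=\im\gamma$.
\end{itemize}

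For (4) I would first record three structural facts.
\begin{itemize}
\item $\gamma$ is $\RR_A$-linear.
\item $\alpha$ is a group homomorphism out of $\GG_A$, and its image is fixed by the $\GG_A$-action. The reason is that conjugating $[a]\otimes Y$ by $\mathrm{diag}(1,b)$ leaves the class unchanged, so $\Lan b\Ran\alpha(\lan a\ran)=0$.
\item In a domain $\mu_2(A)=\{\pm1\}$; in characteristic $2$ everything is trivial. So $\II_A\otimes\mu_2(A)$ is generated by the elements $\Lan a\Ran\otimes(-1)$, and $\II_A^2\otimes\mu_2(A)$ by the elements $\Lan a\Ran\Lan b\Ran\otimes(-1)$.
\end{itemize}
By Lemma \ref{alpha-gam-chi}(3) and these facts,
\[
\gamma(\Lan a\Ran\otimes(-1))=\alpha(\lan a\ran)+\Lan a\Ran\chi_1,\qquad
\gamma(\Lan a\Ran\Lan b\Ran\otimes(-1))=\Lan a\Ran\Lan b\Ran\chi_1 .
\]
Hence $\im\gamma\subseteq\im\alpha$ is equivalent to $\Lan a\Ran\chi_1\in\im\alpha$ for all $a$. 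Likewise, $\II_A^2\otimes\mu_2(A)\subseteq\ker\gamma$ is equivalent to $\Lan a\Ran\Lan b\Ran\chi_1=0$, which by the first formula is the same as $\Lan ab\Ran\chi_1=\Lan a\Ran\chi_1+\Lan b\Ran\chi_1$.

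To separate the pieces $\alpha(\lan a\ran)$ and $\Lan a\Ran\chi_1$ I would use $d^2_{2,1}$ together with $H_2(B(A),\z)\simeq\aa\wedge\aa\oplus\SS_2$.
\begin{itemize}
\item By Lemma \ref{alpha-gam-chi}(5), $\alpha(\lan a\ran)$ maps to $(-1\wedge a,0)$.
\item By Lemma \ref{d2-chi}, $\Lan a\Ran\chi_1$ maps into $\SS_2$, via the $\GG_A$-action on that summand.
\end{itemize}

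For ($\Rightarrow$), assume $\Lan a\Ran\chi_1=\alpha(\lan c\ran)$. Applying $d^2_{2,1}$ and projecting to $\aa\wedge\aa$ gives $-1\wedge c=0$. Lemma \ref{ker-d-alpha} then forces $\lan c\ran=\lan\nu\ran$ for a root of unity $\nu$ of even order. From this I would show $\alpha(\lan\nu\ran)=\gamma(\Lan\nu\Ran\otimes(-1))$, using Lemma \ref{alpha-gam-chi}(6),(7) and the fact that $\nu$ has square class $\lan\pm1\ran$ or is killed by the action. This would give both $\Lan a\Ran\chi_1\in\im\gamma$ (hence $\im\alpha\subseteq\im\gamma$ via the first formula) and the additivity $a\mapsto\Lan a\Ran\chi_1$, which is exactly $\II_A^2\otimes\mu_2(A)\subseteq\ker\gamma$.

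For ($\Leftarrow$), write $\alpha(\lan a\ran)=\gamma(\sum_i\Lan c_i\Ran\otimes(-1))$. Using $\II_A^2\otimes\mu_2(A)\subseteq\ker\gamma$, as in the proof of Corollary \ref{ker-gam}, this collapses to $\alpha(\lan a\ran)=\alpha(\lan c\ran)+\Lan c\Ran\chi_1$ with $c=\prod_i c_i$. So $\Lan c\Ran\chi_1\in\im\alpha$, and a $d^2_{2,1}$ argument would bootstrap this to every $\Lan a\Ran\chi_1$.

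I expect (4) to be the main obstacle, specifically controlling $\Lan c\Ran\chi_1$ through $d^2_{2,1}$ using only the domain hypothesis. The step most likely to need adjustment is the root-of-unity case coming from Lemma \ref{ker-d-alpha}. There I would first try Corollary \ref{ker-gam} directly, since it says $\ker\gamma$ is $\II^2_A\otimes\mu_2(A)$ plus $\Lan\nu\Ran$-terms.
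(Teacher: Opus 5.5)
Your treatment of (1)--(3) is correct and matches the paper. Your chase is just the snake-lemma sequence $\ker\delta''\to\im\gamma\to\ker\delta'\to 0$ written out.

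For (4) you follow the paper's strategy:
split $d^2_{2,1}$ along $\aa\wedge\aa\oplus\SS_2$, use Lemma~\ref{ker-d-alpha} to reduce to the square class of a root of unity $\nu$, and dispose of $\lan -1\ran$ with Lemma~\ref{alpha-gam-chi}(6),(7).

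This works when $-1\notin\aa^2$. Any finite cyclic group of roots of unity containing $-1$ then has order $\equiv 2 \pmod 4$, so $\lan\nu\ran\in\{1,\lan -1\ran\}$. Your bootstrap in ($\Leftarrow$) then becomes exactly the paper's case split. If $\lan c\ran=\lan a\ran$, you get $\Lan a\Ran\chi_1=0$. If $\lan c\ran=\lan -a\ran$, you get $\Lan a\Ran\chi_1=\Lan -a\Ran\chi_1=\alpha(\lan -1\ran)$ by (7).

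\textbf{The gap is the case $-1\in\aa^2$} (with $\char(A)\neq 2$). There $\lan -1\ran=1$, so (6) and (7) carry no information. Meanwhile Lemma~\ref{ker-d-alpha} can return a root of unity with nontrivial square class, e.g.\ $i\in\z[i]$, or a generator of $\F_q^\times$ with $q\equiv 1\pmod 4$. Your assertion that such a $\nu$ ``is killed by the action'', i.e.\ $\Lan\nu\Ran\chi_1=0$, is not supported by anything you invoke. The same hole sits inside the unspecified bootstrap for ($\Leftarrow$).

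The paper does not argue this case. It restricts the $d^2_{2,1}$ argument to $-1\notin\aa^2$ and cites \cite[Corollary 6.4]{B-E--2023}, which gives $\im\gamma=\im\alpha$ whenever $\mu_2(A)=\{\pm1\}$ and $-1\in\aa^2$.

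You can also close the case with tools from this paper. Suppose $-1=i^2$.
\begin{enumerate}
\item $\psi_1(-1)=\psi_1(i^2)=0$ by Lemma~\ref{lem-psi1}(7), so $\im\alpha\subseteq\im\gamma$ by (3).
\item $\delta(\chi_1)=0$ by Lemma~\ref{alpha-gam-chi}(1), so $\chi_1=\gamma(x)$ for some $x\in\II_A\otimes\mu_2(A)$.
\item For ($\Rightarrow$), you then only need $\II_A^2\otimes\mu_2(A)\subseteq\ker\gamma$, which follows at once from $\Lan b\Ran\,\im\alpha=0$.
\item For ($\Leftarrow$), $\Lan a\Ran\chi_1=\gamma(\Lan a\Ran x)\in\gamma(\II_A^2\otimes\mu_2(A))=0$. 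Hence $\gamma(\Lan a\Ran\otimes(-1))=\alpha(\lan a\ran)$, which gives $\im\gamma\subseteq\im\alpha$.
\end{enumerate}
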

\begin{proof}
    Applying the snake lemma to diagram \eqref{dia-gam-alpha-2}, we obtain the exact sequence
    \begin{equation}\label{snk-dia-gam-alpha-2}
    \begin{tikzcd}
        \ker(\delta'')\ar[r,"\bar{\alpha}"] & \ker(\delta)=\im(\gamma)\ar[r,"\bar{\pi}"] & \ker(\delta') \ar[r] & 0.
    \end{tikzcd}
    \end{equation}
    For (1), the left-hand map is the composite $\pi\circ\gamma$ and the right-hand map is $\delta'$; exactness follows from the exact sequence above.
    
    For (2), suppose that $\im(\gamma)\subseteq\im(\alpha)$, and let $x=\gamma(y)=\alpha(z)$ with $y\in\II_A\otimes\mu_2(A)$ and $z\in\hat{E}^2_{2,1}$. Then $\delta''(z)=\delta(\alpha(z))=0$, so $\bar{\alpha}$ is surjective and $\ker(\delta')=0$. The converse follows from the exact sequence \eqref{snk-dia-gam-alpha-2}.
    \\
    For (3), it is clear that if $\im(\alpha)\subseteq\im(\gamma)$, then $\delta''=\delta\circ\alpha=0$, which implies that $\II_A\psi_1(-1)=0$. The converse follows from diagram \eqref{dia-gam-alpha-2}.
    \\
    For (4), let $A$ be a domain and assume that $\im(\gamma)\subseteq\im(\alpha)$; it is then clear that $\II_A^2\otimes\mu_2(A)\subseteq \ker(\gamma)$. We shall only treat the case $-1\notin\aa^2$, since the remaining case follows from \cite[Corollary 6.4]{B-E--2023}; similarly, we may assume that $\char(A)\neq 2$, the other case being obvious. Let $a\in\aa$. By the hypothesis and Lemma \ref{alpha-gam-chi} (3), there exists $b\in\aa$ such that 
    \[
        \gamma(\Lan a\Ran\otimes(-1))=\alpha(\lan b\ran)=\alpha(\lan a\ran)+\Lan a\Ran\chi_1
    \]
    Applying $d^2_{2,1}$ to the right-hand equality and using Lemma \ref{ker-d-alpha}, we obtain $\lan b\ran=\lan a\ran$ or $\lan b\ran=\lan -a\ran$ (recall the decomposition $H_2(B(A),\z)=\aa\wedge\aa\oplus \SS_2$). In the first case we are done. In the second case, the equation above and Lemma \ref{alpha-gam-chi} (6) imply that $\Lan a\Ran\chi_1=\alpha(\lan -1\ran)=\gamma(\Lan -1\Ran\otimes(-1))$, and finally $\alpha(\lan a\ran)=\gamma(\Lan a\Ran\otimes(-1)-\Lan -1\Ran\otimes (-1))$. 
    
    Conversely, take an element $\sum\Lan a_i\Ran\otimes(-1)$. Lemma \ref{alpha-gam-chi} (3) and the hypotheses imply that, for some $b\in\aa$,
    \begin{equation}
        \label{eq-prf4}
        \alpha(\lan\prod_{i}a_i\ran)=\gamma(\Lan b\Ran\otimes(-1)).
    \end{equation}
    
    By Lemma \ref{alpha-gam-chi} (3), we have $\gamma(\Lan b\Ran\otimes(-1))=\alpha(\lan b\ran)+\Lan b\Ran\chi_1$. Applying $d^2_{2,1}$ to the equation $\alpha(\lan\prod_{i}a_i\ran)=\alpha(\lan b\ran)+\Lan b\Ran\chi_1$ and using the decomposition $H_2(B(A),\z)=\aa\wedge\aa\oplus \SS_2$, we obtain
    \[
        (b\cdot\prod_ia_i)\wedge (-1)=0 \text{ and } \Lan b\Ran c(h_1,-1)=0,
    \]
    and Lemma \ref{ker-d-alpha} (note that $-1\notin\aa^2$) leads to two cases:
    \[
        \lan b\ran=\lan\prod a_i\ran\text{ or } \lan b\ran=\lan-\prod a_i\ran.
    \]
    
    Now, the hypothesis $\II_A^2\otimes\mu_2(A)\subseteq\ker(\gamma)$ implies, for any $x,y\in\aa$, that
    \[
        \gamma(\Lan xy\Ran\otimes(-1))=\gamma(\Lan x\Ran\otimes(-1)+\Lan y\Ran\otimes(-1)).
    \]
    Hence, in the first case we are done by substituting $b$ in equation \eqref{eq-prf4} and using the identity above. In the second case, the same substitution yields $\alpha(\lan\prod_{i}a_i\ran)=\gamma(\Lan -\prod a_i\Ran\otimes(-1))=\gamma(\sum\Lan a_i\Ran\otimes(-1))+\gamma(\Lan -1\Ran\otimes(-1))$. Using Lemma \ref{alpha-gam-chi} (6), we obtain $\alpha(\lan-\prod_{i}a_i\ran)=\gamma(\sum\Lan a_i\Ran\otimes(-1))$. 
\end{proof}
\begin{exm}\label{ex-im-eq}
\begin{enumerate}
    \item The Euclidean domain $\z$ is a universal $\GE_2$-ring which satisfies $\im(\gamma)\subseteq\im(\alpha)$ by Lemma \ref{alpha-gam-chi} (3) and (7). Hence, by Lemma \ref{thm-main} (4), $\im(\gamma)=\im(\alpha)$.
    \item It is known that a finite field $\F_q$ with $q=p^r$ and $p>2$ has exactly two square classes, $\{1,\langle u\rangle\}$, where $u$ is a non-square. When $-1$ is not a square (the other case was treated in \cite{B-E--2023}), we have $\im(\gamma)=\im(\alpha)$ by Lemma \ref{alpha-gam-chi} (7).
    \item The rings $\z/p^r$ are local, hence universal $\GE_2$-rings; moreover, $(\z/p^r)^\times\simeq\z/p^{r-1}(p-1)$ and the set of square classes is $\{1,\langle u\rangle\}$, where $u$ is a non-square. If $-1$ is not a square, then $\im(\gamma)\subseteq\im(\alpha)$ by Lemma \ref{alpha-gam-chi}. At the same time, $\II_A\psi_1(-1)=0$, and therefore $\im(\gamma)=\im(\alpha)$.
    \item Consider the domains $\z\left[\frac{1}{2}\right]$, $\z\left[\frac{1}{10}\right]$ and $\z\left[\frac{1}{290}\right]$, which are universal $\GE_2$-rings, and observe that $5=2^2+1$ and $29=5^2+2^2=2^2\{(5/2)^2+1\}$. Using the identity $\Lan ab\Ran\chi_1=\Lan a\Ran\Lan b\Ran\chi_1+\Lan a\Ran\chi_1+\Lan b\Ran\chi_1$, together with Corollary \ref{sqr-plus-chi} and Lemma \ref{alpha-gam-chi} (7), we see that every unit $a$ in these rings satisfies $\Lan a\Ran\chi_1=0$; by Lemma \ref{alpha-gam-chi} (3), this gives the inclusion $\im(\gamma)\subseteq\im(\alpha)$, and Lemma \ref{thm-main} (4) implies $\im(\gamma)=\im(\alpha)$.
    \item The ring of Eisenstein integers $\z[\zeta]$, where $\zeta^2+\zeta+1=0$, is a universal $\GE_2$-ring. Its unit group $\z[\zeta]^\times=\{\pm 1,\pm\zeta,\pm\zeta^2\}$ is cyclic of order $6$, so that $\GG_{\z[\zeta]}=\{1,\lan-1\ran\}$. Lemma \ref{alpha-gam-chi} (3) and (7), together with Lemma \ref{thm-main} (4), imply that $\im(\gamma)=\im(\alpha)$.
\end{enumerate}
\end{exm}
The next corollary sets Theorem 8.2 of \cite{B-E--2023} in this framework.
\begin{cor}
    Let $A$ be a ring satisfying the following conditions:
    \begin{enumerate}
        \item The complex $X_{\bullet}(A^2)\to\z$ is exact in dimension $<2$;
        \item $H_i(B(A),\z)\simeq H_i(T(A),\z)$ for $i=2,3$.
    \end{enumerate}
    Then there exists an exact sequence
    \[
        \II_A\otimes\mu_2(A)\to H_3(\SL_2(A),\SM_2(A);\z)\to \frac{\RP_1(A)}{\II_A\psi_1(-1)}\to 0
    \]
    If, in addition, $\im(\gamma)\subseteq\im(\alpha)$, then
    \[
        H_3(\SL_2(A),\SM_2(A);\z)\simeq\frac{\RP_1(A)}{\II_A\psi_1(-1)}
    \]
    and if $\im(\gamma)=\im(\alpha)$, then
    \[
        H_3(\SL_2(A),\SM_2(A);\z)\simeq\RP_1(A)
    \]
\end{cor}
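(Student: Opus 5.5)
The plan is to identify $H_3(\SL_2(A),\SM_2(A);\z)$ with $\ker(d')=\widetilde{\coker(\alpha)}$, and then with $\coker(\alpha)$ itself, so that Lemma \ref{thm-main} (1)--(3) applies directly.

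\emph{Step 1: identify $\coker(\alpha)$ with $\widetilde{\coker(\alpha)}$.} The hypothesis $H_2(B(A),\z)\simeq H_2(T(A),\z)$ means $\SS_2=0$. Then the target of $d'$ in the snake diagram is $\frac{\aa\wedge\aa}{\aa\wedge\{\pm1\}}$. By Lemma \ref{d2-chi}, $d^2_{2,1}(\chi_1)\in\SS_2=0$. By Lemma \ref{alpha-gam-chi} (5), $d^2_{2,1}$ maps $\im(\alpha)$ into $\{\pm1\}\wedge\aa$. I expect $d'$ to vanish, so that $\widetilde{\coker(\alpha)}=\coker(\alpha)$. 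This requires knowing that $d^2_{2,1}$ has image inside $\aa\wedge\{\pm1\}$ on all of $E^2_{2,1}$.

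This is the step I expect to be the main obstacle. I would try to prove that $E^2_{2,1}$ is generated by $\im(\alpha)$, $\im(\gamma)$ and the $\RR_A$-translates of $\chi_1$. Indeed, by \eqref{d-gam-seq} the group $E^2_{2,1}$ is an extension of $\RP_1(A)$ by $\im(\gamma)$. The elements of $\im(\gamma)$ have the form $\alpha(\cdot)+\Lan a\Ran\chi_1$ by Lemma \ref{alpha-gam-chi} (3), so their images under $d^2_{2,1}$ lie in $\{\pm1\}\wedge\aa\oplus\SS_2$. For general elements I would instead argue via the relative homology sequence of the pair, as in \cite[Theorem 8.2]{B-E--2023}.

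\emph{Fallback formulation.} If this direct approach fails, I would identify $H_3(\SL_2,\SM_2)$ with the cokernel of the map of $E^\infty$ terms, computed via the relative spectral sequence. This is the spectral sequence of the mapping cone of the map of complexes \eqref{sm-sl-cplx}. With $\SS_2=\SS_3=0$, the columns $p=0,1$ of the relative $E^1$ page vanish up to degree $3$, because $H_q(B(A))=H_q(T(A))$ for $q\le 3$ and $\hat X_1\to X_1$ is an isomorphism. Hence the relative $H_3$ reduces to the $(2,1)$ and $(3,0)$ terms.

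\emph{Step 2: express the relative group as $\coker(\alpha)$.} Consider the long exact sequence
\[H_3(\SM_2(A),\z)\to H_3(\SL_2(A),\z)\to H_3(\SL_2(A),\SM_2(A);\z)\to H_2(\SM_2(A),\z)\to H_2(\SL_2(A),\z).\]
Combining it with Proposition \ref{prp-sm2-sl2} and a rank-$2$ analogue for $H_2$, I would show that $H_3(\SL_2(A),\SM_2(A);\z)\simeq\coker(\alpha)$. The $H_2$ analogue uses $\SS_2=0$: the filtration comparison in degree $2$ shows that the kernel of $H_2(\SM_2)\to H_2(\SL_2)$ comes exactly from the $\hat E_{2,0}$ and $\hat E_{1,1}$ discrepancies, which are absorbed into $\coker(\alpha)$.

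\emph{Step 3: conclude.} With the identification of Steps 1--2, Lemma \ref{thm-main} (1) gives the first exact sequence. If $\im(\gamma)\subseteq\im(\alpha)$, Lemma \ref{thm-main} (2) says $\delta'$ is an isomorphism, which gives the second statement. If $\im(\gamma)=\im(\alpha)$, Lemma \ref{thm-main} (3) gives $\II_A\psi_1(-1)=0$, so the quotient is $\RP_1(A)$.
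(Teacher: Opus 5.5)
\paragraph{Verdict.} Your Step 3 matches how the paper concludes: it applies Lemma \ref{thm-main} (1)--(3) to $\coker(\alpha)$. But that step needs the isomorphism $H_3(\SL_2(A),\SM_2(A);\z)\cong\coker(\alpha)$, which you never establish. Your main route to it is not merely hard; it is wrong.

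\paragraph{Why Steps 1 and 2 fail.}
\begin{enumerate}
\item Your starting identification is wrong. By Proposition \ref{prp-sm2-sl2}, $\ker(d')=\widetilde{\coker(\alpha)}$ is the cokernel of $H_3(\SM_2(A),\z)\to H_3(\SL_2(A),\z)$. By the long exact sequence of the pair, this is only a subgroup of $H_3(\SL_2(A),\SM_2(A);\z)$, with quotient $\ker\big(H_2(\SM_2(A),\z)\to H_2(\SL_2(A),\z)\big)$.
\item The map $d'$ does not vanish in general. Assume $\im(\gamma)\subseteq\im(\alpha)$, as in the proof of Theorem \ref{thm-main-1}. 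There one has $d'=\bar{\Delta'}\circ\delta'$ with $\ker(\bar{\Delta'})=\RB(A)/\II_A\psi_1(-1)$, so $d'=0$ would force $\RB(A)=\RP_1(A)$. Take $A=\mathbb{C}$: it satisfies all the hypotheses and has $\II_A=0$. Hence $\im(\gamma)\subseteq\im(\alpha)$ trivially and $\coker(\alpha)=E^2_{2,1}\cong\RP_1(\mathbb{C})$. Yet $\RB(\mathbb{C})=\BB(\mathbb{C})$ is a proper subgroup of $\RP_1(\mathbb{C})=\PP(\mathbb{C})$.
\item Your proposed generation statement is further off still. By Lemma \ref{alpha-gam-chi} (1), (4) it would give $\RP_1(A)=\delta(E^2_{2,1})=\RR_A\psi_1(-1)$. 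That group is killed by $2$ (Lemma \ref{lem-psi1} (5)), which is false in general.
\item The correct picture is $\ker(d')\subseteq\coker(\alpha)$, with $\im(d')$ accounting for part of the $H_2$-kernel above.
\item Step 2 does not repair this. Matching the outer terms of two extensions does not identify their middle terms, and you construct no map between $\coker(\alpha)$ and the relative group.
\end{enumerate}

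\paragraph{What is missing.} What is needed is your fallback, carried all the way through. This is exactly what the paper imports from \cite[\S 8 and Proposition 8.1]{B-E--2023}.
\begin{enumerate}
\item Use the relative spectral sequence $\Eb^1_{p,q}=H_q(\SL_2(A),\SM_2(A);X_p,\hat X_p)$, with $(Z_1,\hat Z_1)$ in column $2$.
\item Shapiro's lemma and the splitting $T(A)\subset B(A)$ give $\Eb^1_{0,q}=\SS_q$ and $\Eb^1_{1,q}=0$. There is no column $3$, so there is no $(3,0)$ term.
\item Hence $\SS_2=\SS_3=0$ gives $H_3(\SL_2(A),\SM_2(A);\z)\cong\Eb^1_{2,1}=H_1(\SL_2(A),\SM_2(A);Z_1,\hat Z_1)$.
\item The real work is computing this group. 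Use the long exact sequence of the pair together with injectivity of $H_0(\SM_2(A),\hat Z_1)\cong\z\to H_0(\SL_2(A),Z_1)$. Injectivity holds because its composite with the map to $H_0(\SL_2(A),X_1)\cong\z$ is multiplication by $2$. This identifies the group with the cokernel of $\hat E^1_{2,1}\to E^1_{2,1}$.
\item Since $\hat E^1_{1,1}\to E^1_{1,1}$ is the identity of $H_1(T(A),\z)$, the snake lemma compares this cokernel with $\coker(\alpha)$. Only after this does Lemma \ref{thm-main} apply as in your Step 3.
\end{enumerate}

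\paragraph{A caveat on the last step.} The natural map $\coker(\alpha)\to\coker(\hat E^1_{2,1}\to E^1_{2,1})$ is injective, with cokernel $\im(d^1_{2,1})/\im(\hat d^1_{2,1})=\mu_2(A)/\{\pm1\}$. The first image is $\mu_2(A)$ because $d^1_{2,1}\circ\gamma=\epsilon\otimes\id$. The second is $\{\pm1\}$ because $\hat d^1_{2,1}$ is the transfer and $w^2=-1$. So identifying the relative group with $\coker(\alpha)$ exactly uses $\mu_2(A)=\{\pm1\}$. The paper's citation-based proof does not spell this point out.
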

\begin{proof}
    In \cite[\S 8]{B-E--2023} we prove that the group $\Eb^2_{2,1}=\Eb^1_{2,1}=H_1(\SL_2(A),\SM_2(A);Z_1(A^2),\hat{Z}_1(A^2))$, and this group is precisely $\coker(\alpha)$. Moreover, when $\SS_2$ and $\SS_3$ are trivial, this group is $H_3(\SL_2(A),\SM_2(A);\z)$ \cite[Proposition 8.1]{B-E--2023}. So it suffices to apply Lemma \ref{thm-main}.
\end{proof}
\begin{rem}
    Note that, in the above result, the condition $\mu_2(A)=\{\pm 1\}$ was unnecessary: in the proof of the isomorphism below (see \cite[Proposition 8.1]{B-E--2023})
    \[
        H_1(\SL_2(A),\SM_2(A);Z_1(A^2),\hat{Z}_1(A^2))\simeq H_3(\SL_2(A),\SM_2(A);\z),
    \]
    we do not use this condition. 
\end{rem}
\begin{rem}
    In particular, if $A$ satisfies conditions (1) and (2) of the corollary above and, in addition, $\mu_2(A)=\{\pm 1\}$ and $-1\in\aa^2$, then $\im(\gamma)=\im(\alpha)$ and hence
    \[
        H_3(\SL_2(A),\SM_2(A);\z)\simeq \RP_1(A),
    \]
    which is precisely the second part of \cite[Theorem 8.2]{B-E--2023}. 
\end{rem}
\begin{exm}
    As further examples, we obtain
    \begin{enumerate}
        \item For the ring $\z\half$ we have
        \[
            H_3(\SL_2(\z\mth{2}),\SM_2(\z\mth{2});\z)\simeq \RP_1(\z\mth{2})
        \]
        \item For $\F_q$ with $|\F_q|>27$ we have
        \[
            H_3(\SL_2(\F_q),\SM_2(\F_q);\z)\simeq \RP_1(\F_q).
        \]
        \item For $\z/p^r$ with $p>9$ we have,
        \[
            H_3(\SL_2(\z/p^r),\SM_2(\z/p^r);\z)\simeq \RP_1(\z/p^r).
        \]
        \item \[
            H_3(\SL_2(\z),\SM_2(\z);\z)\simeq \RP_1(\z).
        \]
        \item For semilocal rings satisfying (1) (for example, universal $\GE_2$-rings) and (2) (residue fields with sufficiently many elements) we have
        \[
            \II_A\otimes\mu_2(A)\to H_3(\SL_2(A),\SM_2(A);\z)\to \frac{\RP_1(A)}{\II_A\psi_1(-1)}\to 0
        \]
        if $\im(\gamma)\subseteq\im(\alpha)$ (see Corollary \ref{sqr-plus-chi}), we have the isomorphism
        \[
            H_3(\SL_2(A),\SM_2(A);\z)\simeq \RP_1(A).
        \]
    \end{enumerate}
\end{exm}
Our first main theorem establishes the exact sequence \eqref{sm2-sl2-seq} when $\im(\gamma)=\im(\alpha)$. First, note that if $\im(\gamma)\subseteq\im(\alpha)$, then Lemma \ref{alpha-gam-chi} (5) yields the commutativity of the following diagram.
\begin{equation}
    \begin{tikzcd}
        & \II_A\otimes\mu_2(A)\ar[r,"\gamma"]\ar[d,"\Delta"] & E^2_{2,1}\ar[r,"\delta"]\ar[d,"d^2_{2,1}"] & \RP_1(A)\ar[r]\ar[d,"\Delta'"] & 0\\
        0\ar[r] & \aa\wedge\mu_2(A)\ar[r] & \aa\wedge\aa\oplus \SS_2\ar[r] & \frac{\aa\wedge\aa}{\aa\wedge\mu_2(A)}\oplus \SS_2\ar[r] & 0
    \end{tikzcd}.
\end{equation}
\begin{lem}\label{ker-rba}
    If $\im(\gamma)\subseteq\im(\alpha)$, then $\RB(A)=\ker(\Delta')$.
\end{lem}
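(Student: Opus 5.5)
We want to show $\RB(A)=\ker(\Delta')$ where $\RB(A)=\ker(\bar d^3_{3,0}:\RP_1(A)\to E^3_{0,2})$. The plan is to identify $\bar d^3_{3,0}$ with $\Delta'$ by tracing the spectral sequence differentials. Recall $\RP_1(A)=E^3_{3,0}$ and $E^2_{0,2}=H_2(B(A),\z)=\aa\wedge\aa\oplus\SS_2$ modulo the image of $d^1_{1,2}$, which is $0$. So $E^2_{0,2}=\aa\wedge\aa\oplus\SS_2$, and $E^3_{0,2}=E^2_{0,2}/\im(d^2_{2,1})$. Hence $\bar d^3_{3,0}$ takes values in $(\aa\wedge\aa\oplus\SS_2)/\im(d^2_{2,1})$.

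\textbf{Step 1: identify $\im(d^2_{2,1})$ with $\aa\wedge\mu_2(A)$ under the hypothesis.} The key observation is that $\im(\gamma)\subseteq\im(\alpha)$ should force $E^2_{2,1}=\im(\alpha)+\Z\chi_1$-type generators to map into $\aa\wedge\mu_2(A)$. More precisely, $E^2_{2,1}$ is generated by $\im(\gamma)$ together with a lift of $\RP_1(A)$; but $\delta$ restricted to $\im(\alpha)$ has image $\II_A\psi_1(-1)$. I expect the cleanest route is to avoid computing $\im(d^2_{2,1})$ fully. Instead, use the commutative diagram just displayed, whose commutativity relies on $d^2_{2,1}\circ\gamma$ landing in $\aa\wedge\mu_2(A)$. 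This follows from $\im(\gamma)\subseteq\im(\alpha)$ and Lemma \ref{alpha-gam-chi}(5): $d^2_{2,1}\alpha\lan a\ran=(-1\wedge a,0)$. The map $\Delta'$ is then the map induced on cokernels of the rows, since $\delta$ is surjective with kernel $\im(\gamma)$.

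\textbf{Step 2: compare with $\bar d^3_{3,0}$.} By the standard description of $d^3$ via zig-zags, for $x\in\RP_1(A)$ one lifts $x=\delta(e)$. Here $\delta$ is the connecting map coming from $0\to Z_2\to X_2\to Z_1\to0$, which is exactly how the $E^1_{3,0}$-column talks to $E^1_{2,1}$ in the truncated double complex. Then $\bar d^3_{3,0}(x)$ is the class of $d^2_{2,1}(e)$ in $E^3_{0,2}$. This is a diagram chase in $F_\bullet\otimes_{\SL_2(A)}X^\tau_\bullet$. The ambiguity in $e$ lies in $\im(\gamma)$, and its image under $d^2_{2,1}$ lies in $d^2_{2,1}(\im\alpha)=\aa\wedge\mu_2(A)$ (using $\mu_2$-classes represented by $-1$). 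The ambiguity in $d^3$ is $\im(d^2_{2,1})$. So there is a natural surjection
\[
\frac{\aa\wedge\aa}{\aa\wedge\mu_2(A)}\oplus\SS_2\longrightarrow E^3_{0,2},
\]
with kernel $\im(d^2_{2,1})/(\aa\wedge\mu_2(A))$, and $\bar d^3_{3,0}$ factors as $\Delta'$ followed by this surjection.

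\textbf{Step 3: injectivity, the main obstacle.} It remains to show $\ker(\bar d^3_{3,0})\subseteq\ker(\Delta')$. Suppose $\Delta'(x)$ dies in $E^3_{0,2}$, so $d^2_{2,1}(e)=d^2_{2,1}(e')$ modulo $\aa\wedge\mu_2$. Then $x=\delta(e)=\delta(e-e')$, and $d^2_{2,1}(e-e')\in\aa\wedge\mu_2(A)=d^2_{2,1}(\im\alpha)$, so $\Delta'(x)=0$ after replacing $e$. The subtle point is that $e'$ is arbitrary in $E^2_{2,1}$, so $\delta(e')$ need not vanish. I would handle this by reinterpreting: $\Delta'$ is well defined only modulo $d^2_{2,1}(\ker\delta)$, and the correct statement is that both kernels equal $\{x:\ d^2_{2,1}(\delta^{-1}x)\cap(\aa\wedge\mu_2)\neq\emptyset\}$. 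I would try to check that $\RB$ is defined via this lift convention, as in \cite{C-H2022}. This precise bookkeeping of what $E^3_{0,2}$ quotients by is where I expect the difficulty.
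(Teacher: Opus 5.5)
Your Step 2 zig-zag is correct and matches the paper's final step: for $x=\delta(e)$ with $e\in E^2_{2,1}$, one gets $\bar d^3_{3,0}(x)=[d^2_{2,1}(e)]\in E^3_{0,2}(\SL_2(A),X^\tau)$. The gap is that you never correctly identify $E^3_{0,2}(\SL_2(A),X^\tau)$, and this is what leaves Step 3 open.

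In the spectral sequence of $X^\tau$, the $(2,1)$-term comes from $H_1(\SL_2(A),X_2)\simeq\RR_A\otimes\mu_2(A)$, not from $H_1(\SL_2(A),Z_1)$. Consider the chain map from $X^\tau_\bullet$ to $0\to Z_1(A^2)\to X_1(A^2)\to X_0(A^2)$ that is $\partial_2$ in degree $2$, zero in degree $3$, and the identity below. It induces a morphism of spectral sequences that is $\gamma$ on the $(2,1)$-terms and the identity on the $(0,2)$-terms. Hence $d^2_{2,1}(X^\tau)=d^2_{2,1}\circ\gamma$, and
$$E^3_{0,2}(X^\tau)=(\aa\wedge\aa\oplus\SS_2)/d^2_{2,1}(\im\gamma).$$
Your surjection with kernel ``$\im(d^2_{2,1})/(\aa\wedge\mu_2(A))$'' instead quotients by the image of $d^2_{2,1}$ on all of $E^2_{2,1}$. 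That cannot be right: $d^2_{2,1}(e)$ lies in that image for every lift $e$, which would force $\bar d^3_{3,0}=0$ and $\RB(A)=\RP_1(A)$.

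What is missing is the equality $d^2_{2,1}(\im\gamma)=\aa\wedge\{\pm1\}$. This is $\aa\wedge\mu_2(A)$ in the setting $\mu_2(A)=\{\pm1\}$ where the lemma is applied. You already have $\subseteq$ from the hypothesis and Lemma \ref{alpha-gam-chi}(5). For $\supseteq$, argue as follows:
\begin{itemize}
\item By Lemma \ref{alpha-gam-chi}(3), $\gamma(\Lan a\Ran\otimes(-1))=\alpha(\lan a\ran)+\Lan a\Ran\chi_1$.
\item By Lemma \ref{d2-chi}, together with the fact that the $\GG_A$-action preserves the splitting $H_2(T(A),\z)\oplus\SS_2$, the term $d^2_{2,1}(\Lan a\Ran\chi_1)$ lies in $\SS_2$.
\item Since the left side lies in $\im\alpha$, its image under $d^2_{2,1}$ has no $\SS_2$-component.
\end{itemize}
Hence $d^2_{2,1}\gamma(\Lan a\Ran\otimes(-1))=(-1\wedge a,0)$. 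So $E^3_{0,2}(X^\tau)$ is exactly the target of $\Delta'$, your zig-zag gives $\bar d^3_{3,0}=\Delta'$, and no separate injectivity step is needed.

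Your Step 3 fails as written for the reason you spotted: $\delta(e')\neq0$ in general, e.g.\ $\delta\alpha(\lan a\ran)=\Lan a\Ran\psi_1(-1)$. The repair is to choose $e'\in\im\gamma$. This is possible precisely because of the inclusion $\aa\wedge\{\pm1\}\subseteq d^2_{2,1}(\im\gamma)$ just established.
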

\begin{proof}
    First, $d^2_{2,1}\circ\gamma=d^2_{2,1}(X^\tau)$, and if $\im(\gamma)\subseteq\im(\alpha)$, then $E^3_{0,3}(\SL_2(A),X^\tau)=\frac{\aa\wedge\aa}{\aa\wedge\mu_2(A)}\oplus\SS_2$ by Lemma \ref{alpha-gam-chi} (5). It is then straightforward to check that $\Delta'=d^3_{2,1}(X^\tau)$ (the proof is the same as that of Lemma 2.3 in \cite{B-E-2025}).
\end{proof}
\begin{thm}
    \label{thm-main-1}
    Let $A$ be a ring satisfying the following conditions:
    \begin{enumerate}
        \item $\mu_2(A)=\{\pm 1\}$.
        \item The complex $X_{\bullet}(A^2)\to\z$ is exact in dimension $<2$;
        \item $H_3(B(A),\z)\simeq H_3(T(A),\z)$.
    \end{enumerate}

    If, in addition, $\im(\gamma)\subseteq\im(\alpha)$, then we have the exact sequence
    \[
        H_3(\SM_2(A),\z)\to H_3(\SL_2(A),\z)\to \frac{\RB(A)}{\II_A\psi_1(-1)}\to 0
    \]
    and if $\im(\gamma)=\im(\alpha)$, then we have
    \[
        H_3(\SM_2(A),\z)\to H_3(\SL_2(A),\z)\to \RB(A)\to 0.
    \]
\end{thm}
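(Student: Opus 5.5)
Start from Proposition \ref{prp-sm2-sl2}. Conditions (2) and (3) are exactly its hypotheses, so we already have the exact sequence
\[
H_3(\SM_2(A),\z)\to H_3(\SL_2(A),\z)\to \widetilde{\coker(\alpha)}\to 0,
\qquad \widetilde{\coker(\alpha)}=\ker(d'),
\]
where $d':\coker(\alpha)\to \frac{\aa\wedge\aa}{\aa\wedge\{\pm1\}}\oplus\SS_2$ is induced by $d^2_{2,1}$. The task is therefore to identify $\ker(d')$ with $\RB(A)/\II_A\psi_1(-1)$ when $\im(\gamma)\subseteq\im(\alpha)$, and with $\RB(A)$ when the two images are equal.

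The identification goes through Lemma \ref{thm-main}(2). Under $\im(\gamma)\subseteq\im(\alpha)$ the map $\delta':\coker(\alpha)\to\RP_1(A)/\II_A\psi_1(-1)$ is an isomorphism. Condition (1) gives $\mu_2(A)=\{\pm1\}$, so the target of $d'$ equals $\frac{\aa\wedge\aa}{\aa\wedge\mu_2(A)}\oplus\SS_2$, which is the target of $\Delta'$ in the diagram before Lemma \ref{ker-rba}.

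The key step is to show that $d'$ and $\Delta'$ agree, in the sense that $d'=\bar\Delta'\circ\delta'$, where $\bar\Delta'$ is the map $\RP_1(A)/\II_A\psi_1(-1)\to \frac{\aa\wedge\aa}{\aa\wedge\mu_2(A)}\oplus\SS_2$ induced by $\Delta'$. For $\bar\Delta'$ to be well defined, $\Delta'$ must kill $\II_A\psi_1(-1)$. This follows from Lemma \ref{alpha-gam-chi}(4),(5):
\[
\Lan a\Ran\psi_1(-1)=\delta(\alpha(\lan a\ran)),\qquad d^2_{2,1}(\alpha(\lan a\ran))=(-1\wedge a,0)\in\aa\wedge\mu_2(A).
\]
For the compatibility itself, take $x\in E^2_{2,1}$. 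Both $d'(\pi(x))$ and $\Delta'(\delta(x))$ are the class of $d^2_{2,1}(x)$ modulo $\aa\wedge\mu_2(A)$, by the definition of $\Delta'$ through the commuting diagram (which uses $\im(\gamma)\subseteq\im(\alpha)$ and Lemma \ref{alpha-gam-chi}(5)) and of $d'$. Hence $\ker(d')\cong\ker(\bar\Delta')$.

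It remains to compute $\ker(\bar\Delta')$. By Lemma \ref{ker-rba}, $\RB(A)=\ker(\Delta')$. Since $\II_A\psi_1(-1)\subseteq\ker(\Delta')$, we get $\ker(\bar\Delta')=\ker(\Delta')/\II_A\psi_1(-1)=\RB(A)/\II_A\psi_1(-1)$, which gives the first sequence. If $\im(\gamma)=\im(\alpha)$, then Lemma \ref{thm-main}(3) gives $\II_A\psi_1(-1)=0$, and the second sequence follows.

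The main obstacle is the compatibility check $d'=\bar\Delta'\circ\delta'$. It is mostly diagram bookkeeping: the snake-lemma construction of $\ker(d')$ from \eqref{alph-inf} must be matched with the definition of $\Delta'$. One should verify that the quotient by $\{\pm1\}\wedge\aa$ in the target of $d'$ is exactly $\aa\wedge\mu_2(A)$, which is where condition (1) enters.
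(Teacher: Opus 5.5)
Your proposal is correct and follows the paper's own proof. Both combine Proposition \ref{prp-sm2-sl2} with the isomorphism $\delta'$ from Lemma \ref{thm-main}(2), identify $d'$ with $\bar{\Delta'}\circ\delta'$ (using $\mu_2(A)=\{\pm1\}$ to match the targets), and conclude with Lemma \ref{ker-rba} and Lemma \ref{thm-main}(3). You also spell out two checks the paper leaves as ``not difficult'': that $\Delta'$ vanishes on $\II_A\psi_1(-1)$ by Lemma \ref{alpha-gam-chi}(4),(5), and the compatibility $d'=\bar{\Delta'}\circ\delta'$.
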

\begin{proof}
Consider the following completion of diagram \eqref{dia-gam-alpha-2}.
\[
{\small
\begin{tikzcd}
   & & & \aa\wedge\mu_2(A)\ar[dd,hook] & &\\
   & & \II_A\otimes\mu_2(A)\ar[ru,"\Delta''"]\ar[dd,"\gamma" near end] & & &\\
   & \aa\wedge\{\pm 1\}\ar[rr,hook] & & \aa\wedge\aa\oplus\SS_2\ar[rr,twoheadrightarrow,"\pi''"]\ar[dd,twoheadrightarrow] & & \frac{\aa\wedge\aa}{\aa\wedge\{\pm 1\}}\oplus\SS_2\ar[dd,equal]\\
   \hat{E}^2_{2,1}\ar[ru,twoheadrightarrow,"\hat{d}^2_{2,1}"]\ar[rr,"\alpha"]\ar[dd,twoheadrightarrow,"\delta''"] & & E^2_{2,1}\ar[ru,"d^2_{2,1}"]\ar[dd,twoheadrightarrow,"\delta"],\ar[rr,"\pi'"]\ar[ru]\ar[dd,twoheadrightarrow] & & \coker(\alpha)\ar[ru,"d'"]\ar[dd,twoheadrightarrow,"\delta'" near end] &\\
   & & & \frac{\aa\wedge\aa}{\aa\wedge\mu_2(A)}\oplus\SS_2\ar[rr,equal] & & \frac{\aa\wedge\aa}{\aa\wedge\mu_2(A)}\oplus\SS_2\\
   \II_A\psi_1(-1)\ar[rr,hook] & & \RP_1(A)\ar[rr,twoheadrightarrow,"\pi"]\ar[ru,"\Delta'"] & & \frac{\RP_1(A)}{\II_A\psi_1(-1)}\ar[ru,"\bar{\Delta'}"] &\\
\end{tikzcd}
}
\]
where $\pi$, $\pi'$ and $\pi''$ are the canonical maps. Note that $\im(\Delta'')\subseteq\aa\wedge\mu_2(A)$ because $\im(\gamma)\subseteq\im(\alpha)$; another consequence of this is that the map $\delta'$ is an isomorphism, and it is not difficult to show (using the diagram above) that the map
\[
    \bar{\Delta'}:\frac{\RP_1(A)}{\II_A\psi_1(-1)}\to\frac{\aa\wedge\aa}{\aa\wedge\mu_2(A)}\oplus\SS_2
\]
induced by $\Delta'$ coincides with $d'\circ(\delta')^{-1}:\frac{\RP_1(A)}{\II_A\psi_1(-1)}\to\frac{\aa\wedge\aa}{\aa\wedge\mu_2(A)}\oplus\SS_2$. Its kernel is exactly $\frac{\RB(A)}{\II_A\psi_1(-1)}$ by Lemma \ref{ker-rba}, and this group is isomorphic to $\widetilde{\coker(\alpha)}$ via $\delta'$.
The second assertion follows from Lemma \ref{thm-main} (3).
\end{proof}
\begin{exm}
    \begin{enumerate}
        \item Coronado and Hutchinson studied the cases $A=\z$ and $A=\z\half$ in \cite[\S 8]{C-H2022}; our results merely confirm their findings (Theorem \ref{thm-sm2-sl2-g}). 
        \item For the ring $A=\z\mth{10}$ we have the exact sequence
        \[
            H_3(\SM_2(\z\mth{10}),\z)\to H_3(\SL_2(\z\mth{10}),\z)\to \RB(\z\mth{10})\to 0.
        \]
        \item A finite field $\F_q$ with $q=p^d$ and $p>2$ satisfies $-1\notin \aa^2$ when $q\equiv 3 \pmod 4$. In this case, if $(p-1)d>6$, we have 
        \[
            H_3(\SM_2(\F_q),\z)\to H_3(\SL_2(\F_q),\z)\to \RB(\F_q)\to 0
        \]
        \item For the finite local rings $\z/p^r$ with $p>9$ and $-1\notin\aa^2$, we have
        \[
            H_3(\SM_2(\z/p^r),\z)\to H_3(\SL_2(\z/p^r),\z)\to \RB(\z/p^r)\to 0
        \]
        A more complete result for finite local rings such as $\z/p^r$ was obtained by Mirzaii and Rojas in the (unpublished) work \cite[Theorem 6.2]{B-R--2025}.
    \end{enumerate}
\end{exm}
\begin{rem}\label{lack-on-zm}
    The only obstruction to producing further examples of the form $A_m=\z\mth{m}$ seems to lie in the condition 
    \[
        H_3(B(A),\z)\simeq H_3(T(A),\z).    
    \]
    
    In Proposition \ref{prp-sm2-sl2}, this condition is used to guarantee that $E^\infty_{0,3}$ is a quotient of $H_3(T(A),\z)$. An alternative way of ensuring this is to require that $\im(d^2_{2,2})$ annihilate the summand $\SS_3$ of $H_3(B(A),\z)$.
    
    Along these lines, Proposition \ref{prp-hb-ht-am} (3) suggests further examples of rings $\z\mth{m}$ with $r=2$. For instance, the ring $\z\mth{290}$ satisfies all the conditions except the isomorphism above, and all that is needed is an element $x\in E^2_{2,2}$ such that $d^2_{2,2}(x)$ generates $\SS_3\simeq\z/3$. A similar situation occurs for the ring of Eisenstein integers $\z[\zeta]$. In fact, an analysis of the Lyndon/Hochschild-Serre spectral sequence of the extension
    \[
        1\to N(\z[\zeta]) \to B(\z[\zeta])\to T(\z[\zeta])\simeq\lan\zeta\ran\to 1
    \]
    shows that $\SS_3$ fits into the exact sequence
    \[
        0\to \z/6\to\SS_3\to \z/3\to 0.
    \]
\end{rem}
The next result generalizes Theorem 6.6 of \cite{B-E--2023} by removing the condition $H_2(B(A),\z)\simeq H_2(T(A),\z)$. 
\begin{cor}
    \label{thm-main-ant}
    Let $A$ be a ring satisfying the following conditions:
    \begin{enumerate}
        \item $\mu_2(A)=\{\pm 1\}$ and $-1\in\aa^2$;
        \item the complex $X_\bullet(A^2)\to\z$ is exact in dimension $<2$;
        \item $H_3(B(A),\z)\simeq H_3(T(A),\z)$.
    \end{enumerate}
    Then there exists an exact sequence
    \[
        H_3(\SM_2(A),\z)\to H_3(\SL_2(A),\z)\to \RB(A)\to 0
    \]
\end{cor}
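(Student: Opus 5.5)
The plan is to deduce the corollary directly from Theorem \ref{thm-main-1}. Hypotheses (1)--(3) of that theorem are exactly conditions (1)--(3) here, because $\mu_2(A)=\{\pm1\}$ is part of our condition (1). So it is enough to show that $-1\in\aa^2$ forces $\im(\gamma)=\im(\alpha)$. The second assertion of Theorem \ref{thm-main-1} then gives the exact sequence $H_3(\SM_2(A),\z)\to H_3(\SL_2(A),\z)\to\RB(A)\to 0$.

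First I would show $\Lan -1\Ran=0$ in $\RR_A$. Since $-1\in\aa^2$, we have $\lan -1\ran=1$ in $\GG_A$, so this is immediate. Next I would show $\Lan a\Ran\chi_1=0$ for every $a\in\aa$. This should come from Lemma \ref{alpha-gam-chi}(4) and Lemma \ref{lem-psi1}(7): we get $\delta(\Lan a\Ran\chi_1)=\Lan a\Ran\psi_1(-1)=\psi_1(a^2)=0$. That only places $\Lan a\Ran\chi_1$ in $\ker\delta=\im\gamma$, which is not enough. A cleaner route is to show $\chi_1=0$ outright, or at least $\II_A\chi_1=0$. Because $\lan -1\ran=1$, $\psi_1(-1)=\psi_1(i^2)=0$ for $i^2=-1$, so by Lemma \ref{alpha-gam-chi}(1) we get $\delta(\chi_1)=0$, again only giving $\chi_1\in\im\gamma$. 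For the identity I actually need, I would use the representative $\chi_1=[w]\otimes Y-\gamma(\lan1\ran\otimes(-1))$ from Lemma \ref{alpha-gam-chi}(2). I would then imitate the proof of Proposition \ref{sqr-plus-tht}: conjugating $w$ by a suitable matrix built from $i$ should show $\Lan a\Ran\chi_1=\Lan -a\Ran\chi_1$. Combining this with $\Lan -1\Ran\chi_1=0$ from Lemma \ref{alpha-gam-chi}(7) does not directly give zero, so this is the main obstacle.

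My fallback for that obstacle is to invoke the earlier result \cite[Corollary 6.4]{B-E--2023}, which the paper already uses in Lemma \ref{thm-main}(4) to treat the case $-1\in\aa^2$. As I understand it, that result gives $\Lan a\Ran\chi_1=0$, equivalently $\gamma(\Lan a\Ran\otimes(-1))=\alpha(\lan a\ran)$, whenever $-1$ is a square; its proof does not use $\SS_2=0$. With this in hand, Lemma \ref{alpha-gam-chi}(3) gives $\gamma(\Lan a\Ran\otimes(-1))=\alpha(\lan a\ran)$ for all $a$. Since $\mu_2(A)=\{\pm1\}$, the elements $\Lan a\Ran\otimes(-1)$ generate $\II_A\otimes\mu_2(A)$, and the $\lan a\ran$ generate $\GG_A$. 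Hence $\im(\gamma)=\im(\alpha)$.

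Alternatively, it suffices to prove only $\im(\gamma)\subseteq\im(\alpha)$. By Lemma \ref{thm-main}(3) together with Lemma \ref{lem-psi1}(7) we have $\II_A\psi_1(-1)=\{\Lan a\Ran\psi_1(-1)\}=\{\psi_1(a^2)\}=0$. The first assertion of Theorem \ref{thm-main-1} then already yields $\RB(A)/\II_A\psi_1(-1)=\RB(A)$. So the only real input needed is $\im(\gamma)\subseteq\im(\alpha)$, which follows as described from $\Lan a\Ran\chi_1=0$.
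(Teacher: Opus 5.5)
Your proposal is correct and, once you fall back on \cite[Corollary 6.4]{B-E--2023}, it is exactly the paper's proof: that result supplies $\im(\gamma)=\im(\alpha)$ when $\mu_2(A)=\{\pm1\}$ and $-1\in\aa^2$, and the second part of Theorem \ref{thm-main-1} then gives the sequence. The exploratory attempts to prove $\Lan a\Ran\chi_1=0$ directly can be dropped, and from the citation you only need the equality of images, which is all the paper takes from it, not the pointwise identity $\gamma(\Lan a\Ran\otimes(-1))=\alpha(\lan a\ran)$ that you attribute to it.
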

\begin{proof}
    If $\mu_2(A)=\{\pm 1\}$ and $-1\in\aa^2$, then $\im(\gamma)=\im(\alpha)$ by \cite[Corollary 6.4]{B-E--2023}, and we may apply the second part of Theorem \ref{thm-main-1}.
\end{proof}
In what follows, we consider domains. Let $A$ be a domain with $\char(A)\neq 2$. Recall the exact sequence \ref{d-gam-seq}
\[
    \begin{tikzcd}
        \II_A\otimes\mu_2(A)\ar[r,"\gamma"] & E^2_{2,1}\ar[r,"\delta"] & \RP_1(A) \ar[r] & 0
    \end{tikzcd}
\]
If $\II_A^2\otimes\mu_2(A)\subseteq\ker(\gamma)$ and $H_2(B(A),\z)\simeq H_2(T(A),\z)$ (i.e., $\SS_2=0$), then the differential $d^2_{2,2}$ yields the commutative diagram
\[
    \begin{tikzcd}
        &\II_A\otimes\mu_2(A)\ar[r,"\gamma"]\ar[d] & E^2_{2,1}\ar[r,"\delta"]\ar[d,"d^2_{2,1}"] & \RP_1(A) \ar[r]\ar[d] & 0\\
        0 \ar[r] & \aa\wedge\mu_2(A) \ar[r]& \aa\wedge\aa \ar[r] & \frac{\aa\wedge\aa}{\aa\wedge\mu_2(A)} \ar[r] & 0
    \end{tikzcd}
\]
and the snake lemma yields the exact sequence
\[
    \begin{tikzcd}
        \mathcal{K}\ar[r] & E^{\infty}_{2,1}\ar[r] & \RB(A)\ar[r] & 0.
    \end{tikzcd}
\]

Note that, by the proof of Corollary \ref{ker-gam}, every element of $\mathcal{K}$ is of the form $K+\Lan\nu\Ran\otimes(-1)$, where $K\in\II^2_A\otimes\mu_2(A)$ and $\nu$ is an $l$-th root of unity with $l$ even, and when $-1\notin\aa^2$ we have that $\nu\in\{\pm 1\}$. If $\II^2_A\otimes\mu_2(A)\subset\ker(\gamma)$ we obtain the exact sequence
\[
    \begin{tikzcd}
        \lan\Lan-1\Ran\otimes(-1)\ran\ar[r,"\gamma"] & E^{\infty}_{2,1}\ar[r,"\delta"] & \RB(A)\ar[r] & 0
    \end{tikzcd}
\]
and in the spectral sequence $\hat{E}^1_{p,q}\implies H_3(\SM_2(A),\z)$ the term $\hat{E}^{\infty}_{2,1}$ is generated by $\lan-1\ran$. We thus obtain the diagram
\begin{equation}
\begin{tikzcd}
& \hat{E}^{\infty}_{2,1}\ar[d,"\alpha:=\inc_*"] & &\\
\lan\Lan-1\Ran\otimes (-1)\ran\ar[r,"\gamma"]& E^{\infty}_{2,1} \ar[r,"\delta"] & \RB(A)\ar[r] & 0.
\end{tikzcd}
\end{equation}
in which $\im(\gamma)=\im(\alpha)$ by Lemma \ref{alpha-gam-chi} (6); this gives the diagram
\[
    \begin{tikzcd}
        \hat{E}^{\infty}_{2,1}\ar[r,"\alpha"]& E^{\infty}_{2,1} \ar[r,"\delta"] & \RB(A)\ar[r] & 0.
    \end{tikzcd}
\]
Applying the same reasoning as in Proposition \ref{prp-sm2-sl2}, we have proved the second main theorem.
\begin{thm}
    \label{prp-sm2-sl2-dom}
    Let $A$ be a domain with $\char(A)\neq2$ such that
    \begin{enumerate}
        \item the complex $X_{\bullet}(A^2)\to\z$ is exact in dimension $<2$;
        \item $H_i(B(A),\z)\simeq H_i(T(A),\z)$ for $i=2,3$;
        \item $-1\notin\aa^2$;
        \item $\II^2_{A}\otimes\mu_2(A)\subseteq\ker(\gamma)$.
    \end{enumerate}
    Then we have the exact sequence
    \[
        H_3(\SM_2(A),\z)\arr H_3(\SL_2(A),\z)\arr \RB(A) \arr 0.
    \]
\end{thm}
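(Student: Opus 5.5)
The plan is to follow the argument sketched just before the statement and make each step precise. First I would use hypothesis (2) with $i=2$, i.e. $\SS_2=0$, so that $d^2_{2,1}:E^2_{2,1}\to\aa\wedge\aa$ takes values in the exterior square alone. Lemma \ref{alpha-gam-chi} (3) and (5) give $d^2_{2,1}\circ\gamma(\Lan a\Ran\otimes(-1))=(-1)\wedge a$; the contribution of $\Lan a\Ran\chi_1$ vanishes because Lemma \ref{d2-chi} places $d^2_{2,1}(\chi_1)$ in $\SS_2=0$. Hence $d^2_{2,1}\circ\gamma$ lands in $\aa\wedge\mu_2(A)$, and this yields the commutative diagram with exact rows displayed above. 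The snake lemma gives $\mathcal{K}\to E^3_{2,1}\to\ker(\RP_1(A)\to\aa\wedge\aa/\aa\wedge\mu_2(A))\to 0$.

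Next I would identify the right-hand kernel with $\RB(A)$, exactly as in Lemma \ref{ker-rba}, by comparing with the spectral sequence $E(\SL_2(A),X^\tau)$ (cf. \cite[Lemma 2.3]{B-E-2025}). I would also identify $E^3_{2,1}$ with $E^\infty_{2,1}$: the only further differential leaves through $d^3_{2,1}$, into a quotient of $E^1_{-1,3}=0$, and no differential enters $E_{2,1}$ since $E^1_{p,q}=0$ for $p\ge3$ in this truncation.

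I would then determine $\mathcal{K}=\ker(d^2_{2,1}\circ\gamma)$. The computation in the proof of Corollary \ref{ker-gam} uses only the vanishing of $d^2_{2,1}\circ\gamma$. Its conclusion is that each element of $\mathcal{K}$ has the form $K+\Lan\nu\Ran\otimes(-1)$ with $K\in\II_A^2\otimes\mu_2(A)$ and $\nu$ a root of unity of even order. By Lemma \ref{ker-d-alpha}, $\lan\prod a_i\ran=\lan\nu\ran$. Since $-1\notin\aa^2$, any $\nu$ of even order $l$ with $l\ge4$ would give $-1=\nu^{l/2}$, which is a square. So $\lan\nu\ran\in\{1,\lan-1\ran\}$. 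Hypothesis (4) removes $K$, hence $\gamma(\mathcal{K})$ is generated by $\gamma(\Lan-1\Ran\otimes(-1))$, which equals $\alpha(\lan-1\ran)$ by Lemma \ref{alpha-gam-chi} (6).

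Finally I would show $\im(\alpha:\hat E^\infty_{2,1}\to E^\infty_{2,1})=\gamma(\mathcal{K})$. This needs $\hat E^\infty_{2,1}=\ker(\hat d^2_{2,1}:\GG_A\to\{\pm1\}\wedge\aa)$, where $\hat d^2_{2,1}(\lan a\ran)=(-1)\wedge a$. By Lemma \ref{ker-d-alpha} and $-1\notin\aa^2$, this kernel is generated by $\lan-1\ran$.

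This gives the exact sequence $\hat E^\infty_{2,1}\to E^\infty_{2,1}\to\RB(A)\to0$. The filtration argument of Proposition \ref{prp-sm2-sl2} then shows $\hat F_1\to F_1$ is surjective, using hypothesis (2) with $i=3$. The snake lemma on \eqref{rel_2-1} yields $\coker(H_3(\SM_2(A),\z)\to H_3(\SL_2(A),\z))\simeq\coker(\hat E^\infty_{2,1}\to E^\infty_{2,1})\simeq\RB(A)$.

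The main obstacle I expect is the careful identification of the snake-lemma cokernel with $\RB(A)$, and of $E^3_{2,1}$ with $E^\infty_{2,1}$. This requires checking compatibility of $d^2_{2,1}$ with the differential $d^2_{2,1}(X^\tau)$ through $\gamma$ and $\delta$. A secondary point is the kernel of $\hat d^2_{2,1}$: I would first try the same finitely generated subgroup decomposition as in Lemma \ref{ker-d-alpha}.
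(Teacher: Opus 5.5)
Your proposal follows the paper's own argument essentially step for step. $\SS_2=0$ makes $d^2_{2,1}\circ\gamma$ land in $\aa\wedge\mu_2(A)$, and the snake lemma gives $\mathcal{K}\to E^\infty_{2,1}\to\RB(A)\to 0$. The proof of Corollary \ref{ker-gam} describes $\mathcal{K}$. Hypothesis (4) and Lemma \ref{alpha-gam-chi} (6) identify $\gamma(\mathcal{K})$ with $\alpha(\hat E^\infty_{2,1})=\lan\alpha(\lan -1\ran)\ran$. The filtration argument of Proposition \ref{prp-sm2-sl2} then finishes.

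One justification is wrong, however. A root of unity $\nu$ of even order $l\geq 4$ does not force $-1=\nu^{l/2}$ to be a square, because $l/2$ may be odd. For example, in the Eisenstein integers $-\zeta$ has order $6$, and $-1=(-\zeta)^3$ is not a square there.

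Your conclusion $\lan\nu\ran\in\{1,\lan -1\ran\}$ is nevertheless true. The torsion group $T$ in Lemma \ref{ker-d-alpha} is cyclic of even order $l$. Moreover $4\nmid l$, since otherwise $-1=(\zeta^{l/4})^2$ for a generator $\zeta$ of $T$. Hence $T=\{\pm 1\}\times T'$ with $T'$ of odd order. Every element of $T'$ is a square, so every $\nu\in T$ lies in $\pm\aa^2$.

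The same correction is needed where you assert that $\ker\hat d^2_{2,1}$ is generated by $\lan -1\ran$. The paper's phrase ``$\nu\in\{\pm 1\}$'' should also be read in this square-class sense.

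A minor point: you should state that $\Delta(\Lan a\Ran\otimes(-1))=(-1)\wedge a$ maps onto $\aa\wedge\mu_2(A)$. This makes $\coker(\Delta)=0$, which is what lets the snake sequence end in $\to 0$.
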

As an application for domains $\z\mth{m}$, we have the corollary
\begin{cor}\label{sm-sl-zm}
    Let $r\geq 1$ and $m=2\cdot 3^tp_1^{t_1}\cdots p_r^{t_r}$. If the ring $A=\z\mth{m}$ satisfies
    \begin{enumerate}
        \item $\left(\frac{\z}{p_i\z}\right)^\times$ is generated by $\{-1,2,3,p_1,\dots,p_{i-1}\}$ (Morita condition).
        \item For any $i\leq r$, $p_i=p^{2r}+q^{2s}$ where $p,q\in\{-1,2,3,p_1,\dots,p_{i-1}\}$ and $r,s\geq 1$,
    \end{enumerate}
    then
    \[
        \begin{tikzcd}
            H_3(\SM_2(A),\z)\arr H_3(\SL_2(A),\z)\arr \RB(A) \arr 0.
        \end{tikzcd}
    \]
\end{cor}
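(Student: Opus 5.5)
The plan is to verify the four hypotheses of Theorem \ref{prp-sm2-sl2-dom} for $A=\z\mth{m}$ and then apply that theorem. First, $A$ is a domain of characteristic $0$, and $\mu_2(A)=\{\pm1\}$. Since $2\in\aa$, we have $-1\notin\aa^2$, because $-1$ is not a square in $\q$; this gives hypothesis (3).

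For hypothesis (1), I would use the Morita condition (1). Together with Example \ref{ex-ge2}(4), it says that $A$ is a universal $\GE_2$-ring. Here the primes $2,3$ come first in the ordering: $(\z/2)^\times$ is trivial and $(\z/3)^\times=\{\pm1\}$. So $X_\bullet(A^2)\to\z$ is exact in dimension $<2$ by the two theorems of Hutchinson.

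For hypothesis (2), note that $6\in\aa$. Then Lemma \ref{ht-hb-l1} gives $H_n(B(A),\z)\simeq H_n(T(A),\z)$ for all $n$. (The factor $3^t$ should be taken with $t\geq 1$; otherwise I would fall back on Proposition \ref{prp-hb-ht-am}.)

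The main work is hypothesis (4), $\II_A^2\otimes\mu_2(A)\subseteq\ker\gamma$. I would prove the stronger inclusion $\im(\gamma)\subseteq\im(\alpha)$ and then invoke Lemma \ref{thm-main}(4), which gives (4) directly. By Lemma \ref{alpha-gam-chi}(3), $\gamma(\Lan a\Ran\otimes(-1))=\alpha(\lan a\ran)+\Lan a\Ran\chi_1$. So it suffices to show that $\Lan a\Ran\chi_1=0$ for every $a\in\aa$.

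The identity $\Lan ab\Ran\chi_1=\Lan a\Ran\Lan b\Ran\chi_1+\Lan a\Ran\chi_1+\Lan b\Ran\chi_1$ reduces this to the generators $-1,2,3,p_1,\dots,p_r$ of $\GG_A$. The first two cases are direct. Lemma \ref{alpha-gam-chi}(7) kills $-1$. For $2=1^2+1$, Corollary \ref{sqr-plus-chi} applied with $a=1$ kills $2$.

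For $3$, I would apply Corollary \ref{sqr-plus-chi} with $a=\sqrt2$? That is unavailable, so instead I would use an expression such as $3\cdot 2^{-2}\cdot$ (square) $= x^2+1$ with $x\in\aa$. For instance, $\lan 3\ran=\lan 12\ran$ and $12=\ldots$ fails, but $3=(1/2)^2\cdot 12$; alternatively $\lan 3\ran=\lan 3\cdot 9/\ldots\ran$. Concretely, I would search for a unit $x$ with $x^2+1\in 3\cdot\aa^2$, e.g. $x=1/\!\sqrt{\ }$. If this fails, use $x=\tfrac{1}{2}$? Then $x^2+1=5/4$, which is not helpful. I would try $x=\sqrt{2}$-free choices like $x=2^{-1}\cdot\ldots$. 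Should this be the real sticking point, the statement presumably only needs $\Lan 3\Ran\chi_1$ to vanish modulo what is needed. I expect this case to be the main obstacle to check.

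For each $p_i$, condition (2) gives $p_i=p^{2r}+q^{2s}=q^{2s}\bigl((p^r/q^s)^2+1\bigr)$, with $x=p^r/q^s\in\aa$ and $x^2+1=p_i q^{-2s}\in\aa$. So $\lan p_i\ran=\lan x^2+1\ran$, and Corollary \ref{sqr-plus-chi} gives $\Lan p_i\Ran\chi_1=0$. This is exactly the argument of Example \ref{ex-im-eq}(4).

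Hence $\Lan a\Ran\chi_1=0$ for every $a\in\aa$, and so $\im(\gamma)\subseteq\im(\alpha)$. Lemma \ref{thm-main}(4) then yields hypothesis (4), and Theorem \ref{prp-sm2-sl2-dom} gives the exact sequence.
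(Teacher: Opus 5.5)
Your checks of hypotheses (1)--(3) of Theorem~\ref{prp-sm2-sl2-dom} are fine. Your route to hypothesis (4) has a genuine gap, exactly at the point you flagged. You try to prove the stronger statement $\Lan a\Ran\chi_1=0$ for all $a\in\aa$ (i.e.\ $\im(\gamma)\subseteq\im(\alpha)$), and this requires $\Lan 3\Ran\chi_1=0$, which you cannot get.

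The set of classes $\lan g\ran$ with $\Lan g\Ran\chi_1=0$ is a subgroup of $\GG_A$, by the identity for $\Lan ab\Ran\chi_1$. Lemma~\ref{alpha-gam-chi}(7) and Corollary~\ref{sqr-plus-chi} only put $\lan -1\ran$ and the classes $\lan x^2+1\ran$ into it. Each $x^2+1$ is a norm from $\q(i)$, and these norms form a subgroup of $\q^\times$ containing the squares. So every class you can reach is represented by $\pm$ a sum of two rational squares. But $\pm 3$ is not of that form, since $3$ is not a sum of two rational squares. Hence no choice of $x$ exists, and nothing in the paper yields $\Lan 3\Ran\chi_1=0$. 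Note that Example~\ref{ex-im-eq}(4), which you are imitating, only treats rings in which $3$ is not a unit.

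The missing idea is that you do not need $\II_A\chi_1=0$. Hypothesis (4) only involves $\II_A^2$, and the bad class $\lan 3\ran$ enters there only through $\Lan 3\Ran^2$.

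\begin{itemize}
\item Since $\RR_A$ acts trivially on $\im(\alpha)$, Lemma~\ref{alpha-gam-chi}(3) gives $\gamma(\Lan a\Ran\Lan b\Ran\otimes(-1))=\Lan a\Ran\Lan b\Ran\chi_1$.
\item $\II_A^2$ is generated as an $\RR_A$-module by the products $\Lan g\Ran\Lan h\Ran$ with $g,h\in\{-1,2,3,p_1,\dots,p_r\}$.
\item If $g$ or $h$ lies in $\{-1,2,p_1,\dots,p_r\}$, the generator maps to $0$ by exactly the arguments you already have.
\item In the remaining case $g=h=3$, one has $\Lan 3\Ran^2=\lan 9\ran-2\lan 3\ran+1=-2\Lan 3\Ran$. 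So $\Lan 3\Ran^2\otimes(-1)=0$ already in $\II_A\otimes\mu_2(A)$, because $\mu_2(A)$ has exponent $2$.
\end{itemize}

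This is the paper's argument. It proves (4) directly, with no appeal to Lemma~\ref{thm-main}(4).

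Finally, your parenthetical fallback for $t=0$ would not work. In that case Proposition~\ref{prp-hb-ht-am} gives $\SS_2\simeq(\z/3)^r\neq 0$, so hypothesis (2) fails. The condition $t\geq 1$ (so that $6\in\aa$, as the paper uses) is genuinely needed, and it is implicit in the Morita condition, which treats $3$ as an inverted prime.
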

\begin{proof}
    First, note that $A$ is a universal $\GE_2$-ring by Example \ref{ex-ge2} (4), so it satisfies condition (1) of \ref{prp-sm2-sl2-dom}; moreover, it satisfies condition (2) by \ref{ht-hb-l1}, since $6$ is a unit. For condition (4), note that the group $\GG_{S_r}$ is generated by $\{\lan-1\ran,\lan 2\ran,\lan 3\ran\}$, and that $\gamma(\Lan a\Ran\lan b\Ran\otimes(-1))=\Lan a\Ran\lan b\Ran\chi_1$ for any units $a,b$; note also that $\Lan a\Ran^2\otimes(-1)=0$ for any unit $a$. Now, any product $\Lan a\Ran\lan b\Ran$ with $\lan a\ran\neq\lan b\ran$ can be expanded as a sum of elements of $\II_A^2$, each containing a factor $\Lan-1\Ran$ or $\Lan n\Ran$, where $n$ is a sum of squares; such elements annihilate $\chi_1$ by \ref{alpha-gam-chi} (7) and \ref{sqr-plus-chi}. This implies (4). Condition (3) of \ref{prp-sm2-sl2-dom} is trivial.
\end{proof}
\section{A refined Bloch-Wigner sequence over local fields}
The aim of this section is to obtain a refined Bloch-Wigner exact sequence for a (non-archimedean) local field $F$ with $\char(F)\neq 2$. We further assume that $F$ is non-dyadic, i.e., that $\char(\bar{F})\neq 2$, where $\bar{F}$ denotes the residue field. Let $\mathcal{O}_F$ be the ring of integers of $F$ and let $U=\mathcal{O}_F^\times$. It is well known that $\GG_F=\{1,\lan u\ran,\lan \pi\ran,\lan u\pi\ran\}$, where $\pi$ is a {\it uniformizer} and $u\in U$ is a unit whose reduction is not a square in $\bar{F}$. Since $\mu(F)$ is finite cyclic, let $\zeta$ be a generator and set $2N=|\mu(F)|$. We may take $u=\zeta$, or $u=-1$ if $-1$ is not a square in $F$, in which case $N$ is odd.  
\begin{cor}\label{sm2-sl2-seq-local}
    Let $F$ be a non-dyadic local field with $\char(F)\neq 2$. There exists an exact sequence
    \[
        \begin{tikzcd}
            H_3(\SM_2(F),\z)\arr H_3(\SL_2(F),\z)\arr \RB(F) \arr 0.
        \end{tikzcd}
    \]
\end{cor}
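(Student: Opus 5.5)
The plan is to split according to whether $-1$ is a square in $F$. If $-1\in (F^\times)^2$, the statement is \cite[Theorem 6.8]{B-E--2023}; equivalently, we apply Corollary \ref{thm-main-ant}. A field has $\mu_2(F)=\{\pm1\}$ and is a universal $\GE_2$-ring by Example \ref{ex-ge2} (1), so condition (2) holds. Condition (3), $H_3(B(F),\z)\simeq H_3(T(F),\z)$, follows from Proposition \ref{ht-hb-local}, since $F$ is a field and hence local with infinite residue field (the "residue field" of $F$ itself is $F$). So we may assume $-1\notin (F^\times)^2$. Then $u=-1$ and $\GG_F=\{1,\lan -1\ran,\lan\pi\ran,\lan -\pi\ran\}$.

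In the case $-1\notin(F^\times)^2$ I will apply Theorem \ref{prp-sm2-sl2-dom}. Conditions (1)--(3) hold as above; for condition (2) we also need the $i=2$ case of Proposition \ref{ht-hb-local}. Everything therefore reduces to condition (4), $\II_F^2\otimes\mu_2(F)\subseteq\ker(\gamma)$. Since $\Lan a\Ran\Lan b\Ran\otimes(-1)$ is sent by $\gamma$ to $\Lan a\Ran\Lan b\Ran\chi_1$ (using Lemma \ref{alpha-gam-chi} (3) and $\Lan a\Ran\alpha(\lan b\ran)$ being a combination of $\alpha$-values that cancels), it suffices to show $\Lan a\Ran\Lan b\Ran\chi_1=0$ for $a,b$ among the generators $-1,\pi$. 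Any product containing $\Lan -1\Ran$ dies by Lemma \ref{alpha-gam-chi} (7). The only remaining product is $\Lan\pi\Ran^2\chi_1=-2\Lan\pi\Ran\chi_1$, using $\Lan x\Ran^2=-2\Lan x\Ran$. So it is enough to show $2\Lan\pi\Ran\chi_1=0$, or better $\Lan\pi\Ran\chi_1=0$.

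For this I would use Corollary \ref{sqr-plus-chi}: if $a^2+1$ is a unit lying in the square class of $\pi$, or of $-\pi$, then $\Lan a^2+1\Ran\chi_1=0$. Combined with $\Lan -1\Ran\chi_1=0$, this gives $\Lan\pi\Ran\chi_1=0$ via the identity $\Lan -\pi\Ran=\Lan-1\Ran\Lan\pi\Ran+\Lan -1\Ran+\Lan\pi\Ran$. Since $-1$ is a nonsquare in a non-dyadic field, $-1$ is not a sum of two squares in $\bar F$ only if... in fact every element of a finite field of odd characteristic is a sum of two squares. So the form $x^2+y^2$ is isotropic-free but universal over $F$: it represents every class of $\GG_F$, since a binary anisotropic form over a local field represents exactly two square classes. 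Here I must check which two. The form $\lan1,1\ran$ has discriminant $\lan -1\ran$ and represents the classes $c$ with Hilbert symbol $(c,-1)=1$. For $-1=u$ a nonsquare unit, $(\pi,-1)=-1$. So $x^2+y^2$ represents only the unit classes $1,\lan-1\ran$, and never $\pi$.

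This is the main obstacle: Corollary \ref{sqr-plus-chi} alone cannot kill $\Lan\pi\Ran\chi_1$. What I would try first is Proposition \ref{sqr-plus-tht} with a variant of its matrix argument using $\partial_3(\pmb\infty,\pmb0,\pmb{b},\pmb{a})$ for general $a,b$. This should produce relations $\Lan c\Ran\chi_1=0$ for $c=a-b$ type units, weighted by classes represented by $x^2-\pi y^2$. Failing that, I would use that $\chi_1$ is $2$-torsion data together with the fact that only $2\Lan\pi\Ran\chi_1=0$ is needed. Note $2\chi_1$ relates to $\gamma(2\lan1\ran\otimes(-1))=0$ and $2[w]\otimes Y$. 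Then $2\Lan\pi\Ran\chi_1=2\Lan\pi\Ran[w]\otimes Y$, which I would show equals $\alpha(\lan\pi^2\ran)$-type terms that vanish. Once condition (4) holds, Theorem \ref{prp-sm2-sl2-dom} gives the sequence.
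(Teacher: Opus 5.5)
Your route is the paper's: in the case $-1\notin(F^\times)^2$ you apply Theorem \ref{prp-sm2-sl2-dom} and check condition (4) using $\gamma(\Lan a\Ran\Lan b\Ran\otimes(-1))=\Lan a\Ran\Lan b\Ran\chi_1$ and Lemma \ref{alpha-gam-chi} (7). Your handling of the case $-1\in(F^\times)^2$ is also fine.

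\textbf{The gap.} As written, the proposal does not finish condition (4). You call $\Lan\pi\Ran^2$ ``the main obstacle'' and leave it to speculative arguments, but nothing needs proving there. In $\RR_F$ one has $\Lan\pi\Ran^2=\lan\pi^2\ran-2\lan\pi\ran+1=-2\Lan\pi\Ran$. Since $\mu_2(F)\simeq\z/2$, this gives
\[
\Lan\pi\Ran^2\otimes(-1)=-2\Lan\pi\Ran\otimes(-1)=\Lan\pi\Ran\otimes(-1)^{-2}=0
\]
already in $\II_F\otimes\mu_2(F)$. So $\gamma$ kills this term with no information about $\chi_1$ required.

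Equivalently, Lemma \ref{alpha-gam-chi} (3) gives $2\Lan\pi\Ran\chi_1=\gamma(2\Lan\pi\Ran\otimes(-1))-2\alpha(\lan\pi\ran)=-\alpha(\lan\pi^2\ran)=0$. This is your ``failing that'' fallback, and it should be the argument rather than a contingency.

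Now use $\Lan-\pi\Ran=\Lan-1\Ran\Lan\pi\Ran+\Lan-1\Ran+\Lan\pi\Ran$. Every product $\Lan g\Ran\Lan h\Ran\otimes(-1)$ with $g,h\in\GG_F$ is then either $0$ or a sum of terms with a factor $\Lan-1\Ran$. So the only case to check is $\Lan-1\Ran\Lan\pi\Ran$, which (7) kills. That is exactly the paper's one-line verification.

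\textbf{The detour.} Your primary plan, proving $\Lan\pi\Ran\chi_1=0$, aims at a stronger statement than is needed. Applying $\delta$ and Lemma \ref{alpha-gam-chi} (1), it would force $\psi_1(\pi^2)=\Lan\pi\Ran\psi_1(-1)=0$ in $\RP_1(F)$, i.e.\ $\II_F\psi_1(-1)=0$ (cf.\ Lemma \ref{thm-main} (3)). Theorem \ref{prp-sm2-sl2-dom} does not assume this, and nothing in your proposal establishes it. Your Hilbert-symbol computation correctly shows that Corollary \ref{sqr-plus-chi} cannot reach the class of $\pi$, after you retract the false intermediate claim that $x^2+y^2$ is universal. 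The suggested variant of Proposition \ref{sqr-plus-tht} is unsubstantiated. Drop this detour entirely.
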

\begin{proof}
    We consider the case when $-1\notin {F^{\times}}^2$ (the case in which $-1$ is a square follows from \cite[Corollary 6.4]{B-E--2023}). Indeed, conditions (1) and (2) of Theorem \ref{prp-sm2-sl2-dom} hold because such fields are infinite. As for condition (4), take any product $\Lan a\Ran\Lan b\Ran\otimes(-1)$; the only relevant case is $a=-1$ and $b=\pi$, where $\pi$ is a uniformizer. In this case, Lemma \ref{alpha-gam-chi} (3) and (7) give
    \[
        \gamma(\Lan -1\Ran\Lan \pi\Ran\otimes(-1))=\Lan \pi\Ran\Lan -1\Ran\chi_1=0.
    \]
    This implies (4).
\end{proof}
\begin{rem}
    \label{lf-classes}
    Note that in any non-dyadic local field we have $F^\times\wedge F^\times=\lan\pi\wedge\zeta\ran+2(F^\times\wedge F^\times)$, where $\zeta$ is a generator of $\mu(F)$. This is obtained simply by taking the square classes of $a$ and $b$ in any element $a\wedge b$. This implies that, if $-1\notin {F^\times}^2$, we may replace $\zeta$ by $-1$.
\end{rem}

We now analyze $H_3(\SM_2(F),\z)$ for a local field $F$ with $-1\notin {F^\times}^2$, using the spectral sequence $\hat{E}^1_{p,q}\implies H_3(\SM_2(F),\z)$. By \cite[Lemma 5.1]{B-E--2023}, $\displaystyle \hat{E}^1_{2,2}=H_2(\SM_2(F),\z)\simeq\frac{F^\times\wedge F^\times}{F^\times\wedge(-1)}$, and the map $\hat{d}^1_{2,2}$, which comes from the transfer map, satisfies $\overline{a\wedge b}\mapsto 2(a\wedge b)$. Lemma \ref{ker-wed-2} implies that $\hat{d}^1_{2,2}$ is injective, so that $\hat{E}^{\infty}_{2,2}=\hat{E}^2_{2,2}=0$. Since $\hat{d}^1_{1,2}=0$, the group $\hat{E}^{\infty}_{1,2}=\hat{E}^2_{1,2}$ is given by 
\[
    \frac{\hat{E}^1_{1,2}}{\im(\hat{d}^1_{2,2})}=\frac{F^\times\wedge F^\times}{2(F^\times\wedge F^\times)}\simeq\lan\overline{\pi\wedge-1}\ran.
\]

In order to determine $\hat{E}^2_{0,3}$, we need some information about the group $H_3(F^\times,\z)$.

Let $B$ be an abelian group and let $\sigma_1:\tors(B,B)\arr \tors(B,B)$
be the map obtained by interchanging the two copies of $B$. It is not difficult to show that $\sigma_1$ is induced by
the involution $B\otimes B \arr B\otimes B$, $a\otimes b\mapsto -b\otimes a$.

Let $\Sigma_2'=\{1, \sigma'\}$ be the symmetric group of order $2$. Consider the following action of 
$\Sigma_2'$ on $\tors(B,B)$:
\[
(\sigma', x)\mapsto -\sigma_1(x).
\]

\begin{prp}\label{H3B}
For any abelian group $B$ we have the exact sequence
\[
\begin{array}{c}
0 \arr \bigwedge_\z^3 B \arr H_3(B,\z) \arr \tors(B,B)^{\Sigma_2'} \arr 0,
\end{array}
\]
where the homomorphism on the right is obtained from the composite
\[
H_3(B,\z) \overset{{\Delta_B}_\ast}{\larr } H_3(B\oplus B,\z) \arr \tors(B,B),
\]
$\Delta_B$ being the diagonal map $B \arr B\oplus B$, $b \mapsto (b,b)$.
\end{prp}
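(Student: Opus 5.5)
We prove Proposition \ref{H3B} by reducing to finitely generated groups and then using the K\"unneth formula. Both $H_3(-,\z)$ and the functors $\bigwedge^3_\z(-)$ and $\tors(-,-)^{\Sigma_2'}$ commute with filtered direct limits (invariants of a finite group commute with filtered colimits, since they are a kernel). We first construct the two maps naturally in $B$:
\begin{itemize}
\item the left map is the Pontryagin product $\bigwedge^3 B\to H_3(B,\z)$, $a\wedge b\wedge c\mapsto a\cdot b\cdot c$, coming from the ring structure of $H_\bullet(B,\z)$ and graded commutativity;
\item the right map is the composite of ${\Delta_B}_\ast$ with the projection $H_3(B\oplus B,\z)\to\tors(B,B)$ onto the Tor summand of the K\"unneth decomposition.
\end{itemize}
Writing $B=\varinjlim B_i$ over finitely generated subgroups, it then suffices to treat finitely generated $B$. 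Exactness of the sequence is compatible with filtered colimits, which are exact.

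For finitely generated $B$, we check the claim when $B$ is cyclic and then show that both sides behave the same way under direct sums.
\begin{itemize}
\item For $B=\z$: $H_3=0$, $\bigwedge^3=0$ and $\tors=0$.
\item For $B=\z/n$: $H_3(\z/n,\z)\simeq\z/n$ and $\bigwedge^3=0$. Also $\tors(\z/n,\z/n)\simeq\z/n$, and $\sigma_1$ acts on it by $-1$: the generator corresponds to $x\otimes x$ and the involution sends it to $-x\otimes x$. Hence the $\Sigma_2'$-action $-\sigma_1$ is trivial and the invariants are all of $\z/n$. It remains to see that the composite ${\Delta}_\ast$ followed by the projection is an isomorphism. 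This is the classical computation that the Tor component of the diagonal on the degree-$3$ generator is the Bockstein-type class $x\ast x$; one can verify it with the standard periodic resolution and an explicit Eilenberg--Zilber/Alexander--Whitney diagonal approximation.
\end{itemize}

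For the inductive step, take $B=B_1\oplus B_2$. The K\"unneth formula gives
\[
H_3(B)=H_3(B_1)\oplus H_3(B_2)\oplus (H_1(B_1)\otimes H_2(B_2))\oplus (H_2(B_1)\otimes H_1(B_2))\oplus \tors(H_1B_1,H_1B_1')\oplus \tors(H_1B_2,H_1B_1)\text{-type terms},
\]
together with $\tors(H_1,H_1)$ cross terms and $\tors(H_0,\cdot)=0$. On the other side,
\[
\bigwedge^3 B=\bigwedge^3B_1\oplus\bigwedge^3B_2\oplus(\bigwedge^2B_1\otimes B_2)\oplus(B_1\otimes\bigwedge^2B_2),
\]
where $H_2=\bigwedge^2$ for abelian groups. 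Similarly,
\[
\tors(B,B)^{\Sigma_2'}=\tors(B_1,B_1)^{\Sigma_2'}\oplus\tors(B_2,B_2)^{\Sigma_2'}\oplus\tors(B_1,B_2),
\]
because the two cross terms are swapped by $\sigma_1$, so the invariants of their sum form one copy of $\tors(B_1,B_2)$. We then match summands: the tensor cross terms of $H_3(B)$ are the images of the wedge cross terms, and the single $\tors(B_1,B_2)$ cross term of $H_3(B)$ maps isomorphically onto the invariant cross term under the diagonal. Naturality of the K\"unneth sequence in both variables makes these identifications compatible with the maps.

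The main obstacle is the diagonal computation: identifying the Tor component of ${\Delta_B}_\ast$ both on $H_3(\z/n)$ and on the cross Tor term. This requires the explicit homological (not merely splitting-level) description of the K\"unneth Tor term, because the K\"unneth splitting is not natural. We will handle this by computing with explicit resolutions. Alternatively, we can cite the known result (Suslin, \emph{$K_3$ of a field and the Bloch group}, Lemma 5.5, or Breen's computation of $H_3$ of abelian groups), which states exactly this exact sequence; we would take this route if the hands-on calculation becomes unwieldy.
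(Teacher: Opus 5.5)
Your fallback, citing Suslin's Lemma~5.5 and Breen, is exactly the paper's proof: it consists of those two references and nothing else. Your direct argument is therefore a genuinely different, self-contained route. Its skeleton (filtered colimits, then cyclic groups, then direct sums) is sound. It buys a proof from the K\"unneth formula alone at the cost of one explicit computation, whereas the citation buys brevity.

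A few corrections are needed before it is a proof.

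First, your K\"unneth display for $H_3(B_1\oplus B_2)$ is garbled. There is exactly one Tor term, $\tors(B_1,B_2)$, since $\tors(\z,-)=0$; you do use the right count later.

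Second, contrary to what you say, the cross term needs no chain-level work. The K\"unneth \emph{projection} onto the Tor term is natural even though the splitting is not. Let $p_i\colon B\to B_i$ be the projections. The identities $(p_1\times p_2)\circ\Delta_B=\id$ and $(p_i\times p_i)\circ\Delta_B=\Delta_{B_i}\circ p_i$ then identify the components of your map:
the $\tors(B_1,B_2)$-component is the K\"unneth projection of $H_3(B_1\times B_2)$, and the $\tors(B_i,B_i)$-components are the maps for $B_i$ composed with $p_{i*}$.
Now chase through the natural K\"unneth short exact sequence. Its tensor part is $H_3(B_1)\oplus H_3(B_2)$ plus the two mixed terms, which are killed by both $p_{i*}$ and are the images of $\bigwedge^2B_1\otimes B_2$ and $B_1\otimes\bigwedge^2B_2$. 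This gives exactness for $B$ from exactness for $B_1$ and $B_2$.

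Third, the cyclic case is the real content, and you only assert it. It is short. Take $\z/n=\lan t\ran$ with the periodic resolution $de_{2k+1}=(t-1)e_{2k}$, $de_{2k}=Ne_{2k-1}$. There is a diagonal approximation with $\psi(e_3)=e_0\otimes e_3+e_1\otimes te_2+e_2\otimes e_1+e_3\otimes te_0$. On coinvariants, the Tor component of $\Delta_*(\bar{e}_3)$ is therefore the class of $\bar{e}_1\otimes\bar{e}_2+\bar{e}_2\otimes\bar{e}_1$. This class generates $H_3$ of the tensor square of $\z\bar{e}_2\xrightarrow{n}\z\bar{e}_1$, which is precisely the summand $\tors(\z/n,\z/n)$. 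Since $\bar{e}_3$ generates $H_3(\z/n,\z)\simeq\z/n$, the map is an isomorphism.

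Finally, you should also check that the image lies in the $\Sigma_2'$-invariants. This amounts to the sign convention for the swap on the K\"unneth Tor term, and it is consistent with your observation that $-\sigma_1$ acts trivially on $\tors(\z/n,\z/n)$.
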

\begin{proof}
See \cite[Lemma~5.5]{suslin1991}, \cite[Section 6]{breen1999}.
\end{proof}

When $B=F^\times$ we have $\tors(F^\times,F^\times)^{\Sigma'_2}=\tors(\mu(F),\mu(F))=\mu(F)$: the first equality holds because $F$ is a domain, and the second because $\mu(F)$ is finite. The map $\hat{d}^1_{1,3}$ induces the commutative diagram
\[
    \begin{tikzcd}
        0 \ar[r] & \bigwedge^3F^\times \ar[r]\ar[d,"2"] & H_3(F^\times,\z)\ar[r]\ar[d,"\hat{d}^1_{1,3}"] & \mu(F)\ar[r]\ar[d,"0"] & 0\\
        0 \ar[r] & \bigwedge^3F^\times \ar[r] & H_3(F^\times,\z)\ar[r] & \mu(F)\ar[r] & 0
    \end{tikzcd}
\]
Indeed, $\hat{d}^1_{1,3}=w_*-\id_*$, where $w_*$ is induced by conjugation by the matrix $w$, and for any element $a\in F^\times$ this action sends $a\mapsto a^{-1}$; this implies that the left-hand map is multiplication by $2$. On the other hand, any generator $\zeta$ of the finite cyclic group $\mu(F)$ has a preimage $\theta_{\zeta}$ represented by the cycle (where $2N=|\mu(F)|$)
\[
    \theta_{\zeta}=\sum^{2N-1}_{i=0}[\zeta|\zeta^i|\zeta]
\]
It is not difficult to show that $w\theta_{\zeta}-\theta_{\zeta}$ vanishes; in fact, it is the differential of
\[
    \sum^{2N-1}_{i=0}\left\{[w|\zeta|\zeta^i|\zeta]-[\zeta^{-1}|w|\zeta^i|\zeta]+[\zeta^{-1}|\zeta^{-i}|w|\zeta]-[\zeta^{-1}|\zeta^{-i}|\zeta^{-1}|w]+[\zeta|\zeta^{-1}|\zeta^{-i}|\zeta^{-1}]-[\zeta|\zeta^i|\zeta|\zeta^{-1}]\right\}.
\]

This implies that the right-hand vertical map is $0$. The left-hand vertical map is surjective, since $\GG_F$ has only four elements: it suffices to pass to square classes in each factor of a generator and then use linearity. By the snake lemma we obtain $\hat{E}^2_{0,3}=\mu(F)$, and since $\hat{E}^2_{2,2}=0$, it follows that $\hat{E}^{\infty}_{0,3}=\mu(F)$. Moreover, $\hat{E}^2_{2,1}=\GG_F$ and $\hat{d}^2_{2,1}(\lan a\ran)=a\wedge(-1)$ (see \cite[\S 5]{B-E--2023}); hence, by Lemma \ref{ker-gam}, $\hat{E}^3_{2,1}=\ker(\hat{d}^2_{2,1})$ is generated by $\lan\zeta\ran$, where $\zeta$ is a generator of $\mu(F)$, and since $-1$ is not a square in $F$, we have $\lan\zeta\ran=\lan-1\ran$. 

Consider now the filtration
\[
    0\subseteq \hat{F}_0\subseteq \hat{F}_1\subseteq \hat{F}_2= \hat{F}_3=H_3(\SM_2(F),\z)
\]
This yields two exact sequences
\begin{equation}
    \label{sq-sm2-1}
    0 \arr \hat{F}_0=\mu_2(F)\arr \hat{F}_1\arr \hat{E}^{\infty}_{1,2}\arr 0
\end{equation}
and
\begin{equation}
    \label{sq-sm2-2}
    0 \arr \hat{F}_1\arr H_3(\SM_2(F),\z)\arr \hat{E}^{\infty}_{2,1}\arr 0
\end{equation}

In \eqref{sq-sm2-1}, a lift of $\overline{\pi\wedge\zeta}$ is the element
\begin{align*}
    \Pi_{\zeta}=&\bigg\{[w|\zeta|\pi]-[w|\pi|\zeta]-[\zeta^{-1}|w|\pi]+[\pi^{-1}|w|\zeta]+[\zeta^{-1}|\pi^{-1}|w]-[\pi^{-1}|\zeta^{-1}|w]+[\zeta|\zeta^{-1}|\pi^{-1}]\\
    &-[\pi|\pi^{-1}|\zeta^{-1}]-[\pi|\zeta|\zeta^{-1}\pi^{-1}]+[\zeta|\pi|\zeta^{-1}\pi^{-1}]\bigg\}\otimes 1
\end{align*}
Its image in $H_3(\SM_2(F),\z)$ under the map in \eqref{sq-sm2-2} is a $2$-torsion element. This implies that $2\Pi_{\zeta}=0$, and hence that \eqref{sq-sm2-1} splits. More precisely,
\begin{align*}
    2\Pi_{\zeta}-c(-1,\pi,\zeta)=&d_4\big(\{[w|w|\zeta|\pi]-[w|w|\pi|\zeta]-[w|\zeta^{-1}|w|\pi]+[w|\pi^{-1}|w|\zeta]+[\zeta|w|w|\pi]-[\pi|w|w|\zeta]\\
    &+[w|\zeta^{-1}|\pi^{-1}|w]-[w|\pi^{-1}|\zeta^{-1}|w]-[\zeta|w|\pi^{-1}|w]+[\pi|w|\zeta^{-1}|w]+[\zeta|\pi|w|w]\\
    &-[\pi|\zeta|w|w]-[\zeta^{-1}|w|\zeta^{-1}|\pi^{-1}]+[\pi^{-1}|w|\pi^{-1}|\zeta^{-1}]+[w|\zeta|\zeta^{-1}|\pi^{-1}]\\
    &-[w|\pi|\pi^{-1}|\zeta^{-1}]+[\zeta^{-1}|\zeta|w|\pi^{-1}]-[\pi^{-1}|\pi|w|\zeta^{-1}]-[\zeta^{-1}|\zeta|\pi|w]+[\pi^{-1}|\pi|\zeta|w]\\
    &+[\pi^{-1}|w|\zeta|\zeta^{-1}\pi^{-1}]-[\zeta^{-1}|w|\pi|\pi^{-1}\zeta^{-1}]-[\pi^{-1}|\zeta^{-1}|w|\zeta^{-1}\pi^{-1}]\\
    &+[\zeta^{-1}|\pi^{-1}|w|\pi^{-1}\zeta^{-1}]-[w|\pi|\zeta|\zeta^{-1}\pi^{-1}]+[w|\zeta|\pi|\pi^{-1}\zeta^{-1}]\\
    &-[\zeta^{-1}|\pi^{-1}|\zeta\pi|w]+[\pi^{-1}|\zeta^{-1}|\pi\zeta|w]-[\zeta^{-1}|\pi^{-1}|\pi|\zeta]+[\pi^{-1}|\zeta^{-1}|\zeta|\pi]\\
    &-[\zeta|\zeta^{-1}\pi^{-1}|\pi|\zeta]+[\pi|\pi^{-1}\zeta^{-1}|\zeta|\pi]-[\pi|\zeta|\zeta^{-1}\pi^{-1}|\pi\zeta]+[\zeta|\pi|\pi^{-1}\zeta^{-1}|\zeta\pi]\\
    &+[\zeta|\zeta^{-1}|\pi^{-1}|\pi]-[\pi|\pi^{-1}|\zeta^{-1}|\zeta]-[\pi|\pi^{-1}|\pi|\zeta]+[\zeta|\zeta^{-1}|\zeta|\pi]\}\otimes 1\big)
\end{align*}
where $c(-1,\pi,\zeta)$ is the image of $-1\wedge\pi\wedge\zeta\in\bigwedge^3F^\times$, and this last element vanishes because $-1=\zeta^{|\mu(F)|/2}$. Note that in all the computations above we may replace $\zeta$ by $-1$ when $-1\notin{F^\times}^2$; thus the element $\Pi_{-1}$ is $2$-torsion as well.\\ 
\begin{prp}
    Let $F$ be a non-dyadic local field with $\char(F)\neq 2$. If $-1\notin{F^\times}^2$, then
    \[
        H_3(\SM_2(F),\z)\simeq \frac{\z}{(4N)\z}\oplus\frac{\z}{2\z}. 
    \]
    where $2N=|\mu(F)|$.
\end{prp}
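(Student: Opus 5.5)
The plan is to read off $H_3(\SM_2(F),\z)$ from the filtration $0\subseteq\hat F_0\subseteq\hat F_1\subseteq\hat F_2=H_3(\SM_2(F),\z)$ and the $\hat E^\infty$-terms computed above, and then to settle the extension problems using a finite subgroup of $\SM_2(F)$.

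\textbf{Step 1: the graded pieces.} Collecting the computations already made:
\begin{itemize}
\item $\hat E^\infty_{0,3}=\mu(F)\simeq\z/2N$.
\item $\hat E^\infty_{1,2}\simeq\z/2$, generated by $\overline{\pi\wedge -1}$.
\item $\hat E^\infty_{2,2}=0$.
\item $\hat E^\infty_{2,1}=\hat E^3_{2,1}$ is generated by $\lan -1\ran$. This term is $\z/2$ because $\lan -1\ran\neq 1$ in $\GG_F$.
\item $\hat E^\infty_{3,0}=0$.
\end{itemize}
Hence $|H_3(\SM_2(F),\z)|=2N\cdot 2\cdot 2=8N$. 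Since $\Pi_{-1}$ is $2$-torsion, \eqref{sq-sm2-1} splits, so $\hat F_1\simeq \mu(F)\oplus\z/2$ with the $\z/2$ spanned by $\Pi_{-1}$.

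\textbf{Step 2: a cyclic summand of order $4N$.} It remains to show that the extension \eqref{sq-sm2-2} is as non-split as possible on the $\mu(F)$-part. Let $\zeta$ generate $\mu(F)$ and consider the subgroup $Q=\lan \zeta, w\rangle\subseteq \SM_2(F)$. The relations $w^2=-1=\zeta^{N}$ and $w\zeta w^{-1}=\zeta^{-1}$ show that $Q$ is the dicyclic (binary dihedral) group of order $4N$. Such a group has periodic cohomology of period $4$ (it acts freely on $S^3$), so $H_3(Q,\z)\simeq\z/4N$.

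I then compare spectral sequences. $Q$ acts on the same complex $\hat X_\bullet$: it acts on $\{\pmb\infty,\pmb 0\}$ transitively, with stabilizer $\mu(F)$. This gives a spectral sequence for $Q$ mapping to $\hat E$. For $Q$ the relevant terms are:
\begin{itemize}
\item the $(0,3)$ term is $H_3(\mu(F))=\z/2N$, and it maps isomorphically onto $\hat E^\infty_{0,3}$;
\item the $(2,1)$ term is generated by the class of $-1$ (coming from $[w]\otimes Y$) and maps onto $\hat E^\infty_{2,1}$;
\item the $(1,2)$ term is $H_2(\mu(F))=0$.
\end{itemize}
Since $H_3(Q)$ is cyclic of order $4N$ and the corresponding filtration quotients for $Q$ have orders $2N$ and $2$, a generator $x$ of $H_3(Q)$ maps to a lift of $\lan -1\ran$. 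Moreover $2x$ generates the image of $H_3(\mu(F))=\mu(F)=\hat F_0$.

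\textbf{Step 3: conclusion.} The image of $x$ in $H_3(\SM_2(F),\z)$ has order exactly $4N$: its multiples $2x$ already span $\hat F_0\simeq\z/2N$, and $x$ itself is nonzero in $\hat E^\infty_{2,1}$. The element $\Pi_{-1}$ has order $2$ and lies in $\hat F_1$. Every multiple of $x$ lying in $\hat F_1$ is an even multiple and therefore lies in $\hat F_0$, whereas $\Pi_{-1}$ has nonzero image in $\hat E^\infty_{1,2}$. Hence $\lan x\ran\cap\lan\Pi_{-1}\ran=0$. The order count $4N\cdot 2=8N$ then gives
\[
H_3(\SM_2(F),\z)=\lan x\ran\oplus\lan\Pi_{-1}\ran\simeq\z/4N\oplus\z/2.
\]

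\textbf{Main obstacle.} The delicate point is Step 2: checking that the comparison map from the $Q$-spectral sequence is an isomorphism on the $(0,3)$ corner and nonzero on the $(2,1)$ corner. In particular one must ensure that no differential of the $\SM_2(F)$-spectral sequence kills the image of $H_3(\mu(F))$. The paper's explicit cycle $\theta_\zeta$ and the vanishing of $w\theta_\zeta-\theta_\zeta$ should handle this. If that fails, the fallback is to verify directly, with explicit bar cycles, that $2\cdot(\text{lift of }[w]\otimes Y)$ equals $\theta_\zeta$ up to boundaries, in the style of the $\Pi_\zeta$ computation.
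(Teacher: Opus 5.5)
Your argument is correct, and it settles the extension problem by a genuinely different route from the paper.

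\textbf{The paper's route.} It works at chain level. It writes an explicit bar cycle $\Gamma$ lifting $\lan -1\ran$ and exhibits a $d_4$-boundary showing that $2\Gamma=[-1|-1|-1]\otimes 1$, the element of order $2$ in $\hat F_0=\mu(F)$. It then checks directly that $\Theta_\zeta+\Gamma$ has order $4N$, using that $N$ is odd.

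\textbf{Your route.} You package the same information into the dicyclic subgroup $Q=\lan \zeta,w\ran$. In fact both $\theta_\zeta$ and $\Gamma$ are bar cycles of $Q$, so the paper's generator lies in the image of $H_3(Q,\z)$. You then use periodicity to get $H_3(Q,\z)\simeq\z/4N$ and $H_2(Q,\z)=0$. Injectivity of $H_3(Q,\z)\to H_3(\SM_2(F),\z)$ follows formally from the comparison of filtrations: the map is an isomorphism on the $(0,3)$ corner, $E^\infty_{1,2}(Q)=0$, and the map is injective on the $(2,1)$ corner. This avoids the error-prone explicit boundary computations and explains conceptually why the extension is non-split, at the cost of invoking the homology of groups with periodic cohomology. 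The paper's route is fully explicit and self-contained.

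\textbf{Two small corrections.} First, the generator of $E^2_{2,1}(Q)\simeq\z/2$ is the class of $[-1]\otimes Y=2([w]\otimes Y)$. The element $[w]\otimes Y$ itself is not a $d^1$-cycle, because the transfer to $\mu(F)$ sends $w$ to $w^2=-1$.

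Second, your ``main obstacle'' is already settled by the paper's computation $\hat E^\infty_{0,3}=\hat E^2_{0,3}\simeq\mu(F)$. By naturality of Proposition \ref{H3B} for $\mu(F)\hookrightarrow F^\times$, the composite $H_3(\mu(F),\z)\to\hat E^\infty_{0,3}$ is an isomorphism. Since $E^\infty_{0,3}(Q)$ is a quotient of $H_3(\mu(F),\z)\simeq\z/2N$ that surjects onto $\z/2N$, it is itself an isomorphism. No fallback computation is needed.
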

\begin{proof}
    When $-1$ is not a square, the exact sequence \eqref{sq-sm2-2} reads
    \[
        \begin{tikzcd}
            0 \ar[r]& \mu(F)\oplus\lan\overline{\pi\wedge(-1)}\ran\ar[r] & H_3(\SM_2(F),\z)\ar[r] & \hat{E}^{\infty}_{2,1}=\lan\lan -1\ran\ran\ar[r] & 0.
        \end{tikzcd}
    \]
    Taking a lift of $\lan -1\ran$ on the right-hand side, we obtain the cycle
    \[
        \Gamma = \{[w|w|-1]-[w|-1|w]+[w|-1|-1]+[-1|w|w]-[-1|w|-1]+[-1|-1|w]\}\otimes 1.
    \]
    It is not difficult to show that
    \begin{align*}
        2\Gamma-[-1|-1|-1]\otimes 1=&d\big(\{[w|w|w|-1]+[-1|-1|w|w]-[-1|w|-1|w]+[w|-1|-1|w]\\
        &+[w|w|-1|-w]-[w|-1|w|-w]+[-1|w|w|-w]\}\otimes 1\big).
    \end{align*}
    
    Now let $\Theta_\zeta$ denote the image of $\theta_{\zeta}$ in $H_3(\SM_2(F),\z)$. One checks that $\Theta_{\zeta}+\Gamma$ generates a cyclic subgroup of order $4N$ (recall that $N$ is odd), while $\Pi_{-1}$ generates a subgroup of order $2$; this implies that
    \[
        H_3(\SM_2(F),\z)\simeq\frac{\z}{(4N)\z}\oplus\frac{\z}{2\z}.
    \]
\end{proof}

Note that $\mu(F)^{\tilde{}}\simeq\frac{\z}{(4N)\z}$ is the unique non-trivial extension of $\mu(F)$ by $\z/2$. We now return to the spectral sequence $E^1_{p,q}\implies H_{p+q}(\SL_2(F),\z)$.
\begin{lem}
    \label{d122-surj}
    If $-1\notin {F^{\times}}^2$, then $d^1_{2,2}$ is surjective.
\end{lem}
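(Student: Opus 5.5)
We need to show that $d^1_{2,2}:E^1_{2,2}\to E^1_{1,2}$ is surjective when $-1\notin{F^\times}^2$. The plan is to factor this differential through the $\SM_2$ spectral sequence, using the map of spectral sequences \eqref{map-s-seq}.

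First we identify the groups involved. By \eqref{struct-X-hatX} and Shapiro's lemma we have $E^1_{1,2}\simeq H_2(T(F),\z)\simeq F^\times\wedge F^\times$, and the comparison $\hat{E}^1_{1,2}\to E^1_{1,2}$ is the identity on $H_2(T(F),\z)$, since $\hat X_1\simeq X_1\simeq \Ind^{\SM_2(F)}_{T(F)}\z$ as $\SM_2$-modules. The group $E^1_{2,2}=H_2(\SL_2(F),X_2(F^2))$ is a sum over square classes $\lan a\ran\in\GG_F$. Via Shapiro it decomposes into copies of $H_2$ of the stabilizer of a triple, essentially $H_2(\mu_2(F),\z)=0$ for these triples. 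That would make $E^1_{2,2}$ small. So instead we should use the truncated complex \eqref{sm-sl-cplx}, where the degree-two term is $Z_1$. In that setting $E^1_{2,2}=H_2(\SL_2(F),Z_1(F^2))$, and it receives $\hat E^1_{2,2}=H_2(\SM_2(F),\z)$ via $\hat Z_1\simeq\z\to Z_1$.

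The commutativity of \eqref{map-s-seq} gives $d^1_{2,2}\circ\inc_*=\inc_*\circ\hat d^1_{2,2}$. By \cite[Lemma 5.1]{B-E--2023}, $\hat d^1_{2,2}$ sends $\overline{a\wedge b}\mapsto 2(a\wedge b)$. Hence $\im(d^1_{2,2})\supseteq 2(F^\times\wedge F^\times)$.

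By Remark \ref{lf-classes}, with $-1\notin{F^\times}^2$, we have $F^\times\wedge F^\times=\lan\pi\wedge(-1)\ran+2(F^\times\wedge F^\times)$. So it remains only to hit the class of $\pi\wedge(-1)$.

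This is the main obstacle: we must produce an element of $E^1_{2,2}$ not coming from $\SM_2(F)$ whose image is $\pi\wedge(-1)$ modulo $2(F^\times\wedge F^\times)$. My first attempt is to use the elements $\chi_1,\theta_a$ type constructions one degree higher. Take a cycle such as $([w|\pi]-[\pi^{-1}|w])\otimes\partial_2(X_{-1}-X_1)$ or $c(\pi,\cdot)$ paired with $\partial_2(X_a)$ for a suitable $a$. Then compute $d^1_{2,2}$ by lifting to $B_2\otimes X_2$, applying $\partial_2$, and projecting with Shapiro to $H_2(T)$, as in the proof of Lemma \ref{d2-chi}.

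The expected outcome is an element mapping to $\pi\wedge(-1)$ plus an even term. Because $-1\notin{F^\times}^2$, the stabilizer bookkeeping of $X_{-1}$ versus $X_1$ lies in different $\GG_F$-components, and this should contribute the odd multiple of $\pi\wedge(-1)$.

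Alternatively, one can argue via the Hochschild–Serre description of $E^1_{2,2}$ for $Z_1\simeq\Ind^{\SL_2}_{\ldots}$. The transfer to $T(F)$ from the index-two overgroup of $T(F)$ in the stabilizer of $Y$ gives the norm map. When $-1$ is a non-square, the class $\lan -1\ran$ produces the missing generator. I would try the explicit cycle computation first, and fall back on the transfer/norm argument if the bar-complex bookkeeping becomes unwieldy.
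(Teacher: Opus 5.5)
Your reduction is correct and is exactly how the paper's proof ends. Via the map of spectral sequences, $\hat d^1_{2,2}(\overline{a\wedge b})=2(a\wedge b)$ gives $2(F^\times\wedge F^\times)\subseteq\im(d^1_{2,2})$; this is the paper's element $([u|v]-[v|u])\otimes Y$. Remark~\ref{lf-classes} then leaves only the class of $\pi\wedge(-1)$.

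That last class is the whole content of the lemma, since $(F^\times\wedge F^\times)/2(F^\times\wedge F^\times)\simeq\bigwedge^2_{\F_2}\GG_F\simeq\z/2$ is generated by it, and you do not produce it:
\begin{itemize}
\item \emph{The proposed chain is not a cycle.} Take $([w|\pi]-[\pi^{-1}|w])\otimes z$ with $z=\partial_2(X_{-1}-X_1)$. Since $w\pi=\pi^{-1}w$, its boundary is $[\pi]\otimes w^{-1}z+[w]\otimes(z-\pi z)-[\pi^{-1}]\otimes z$. The term $-[\pi^{-1}]\otimes z\neq 0$ sits on its own bar generator and nothing cancels it.
\item \emph{The parity heuristic is not an argument.} That $X_{-1}$ and $X_1$ lie in different $\GG_F$-components does not show that any class maps to an odd multiple of $\pi\wedge(-1)$.
\item \emph{The fallback fails structurally.} $Z_1$ is not an induced module. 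The induced piece you have in mind, $\z[\SL_2(F)]Y\simeq\Ind^{\SL_2(F)}_{\SM_2(F)}\z$, contributes exactly $\inc_*(\hat E^1_{2,2})$. Its image under $d^1_{2,2}$ is $\hat d^1_{2,2}(\hat E^1_{2,2})=2(F^\times\wedge F^\times)$, which is precisely the transfer/norm you describe. So nothing coming from the stabilizer of $Y$ can hit $\pi\wedge(-1)$.
\end{itemize}

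The missing idea is a way to manufacture classes in $E^1_{2,2}=H_2(\SL_2(F),Z_1)$ that do not come from $\SM_2(F)$. The paper does this as follows:
\begin{itemize}
\item It uses the short exact sequence $0\to Z_2\to X_2\to Z_1\to 0$. You noticed, then discarded, that $H_2(\SL_2(F),X_2)=\bigoplus_{\GG_F}H_2(\mu_2(F),\z)=0$. Hence $E^1_{2,2}$ is identified with the kernel of $H_1(\SL_2(F),Z_2)\to H_1(\SL_2(F),X_2)\simeq\RR_F\otimes\mu_2(F)$.
\item The class of $[-1]\otimes\partial_3((\pmb{\infty},\pmb{0},\pmb{1},\pmb{\pi+1}))$ maps to $\Lan\pi+1\Ran\Lan-\pi\Ran\otimes(-1)$. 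This vanishes because $1+\pi=r^2$ by Hensel's lemma; that is where non-dyadicity enters.
\item The class therefore lifts to an explicit $l(\pi)\in E^1_{2,2}$. It is a combination of chains $([x|-1]-[-1|x])\otimes m$, with $r$ among the $x$'s.
\item A section-trick computation gives $d^1_{2,2}(l(\pi))=(1+\pi)\wedge(-1)-(1+\pi^{-1})\wedge(-1)=\pi\wedge(-1)$.
\end{itemize}
Without such a construction, your argument only yields $\im(d^1_{2,2})\supseteq 2(F^\times\wedge F^\times)$, which is a subgroup of index $2$.
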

\begin{proof}
    For such fields we know that $\GG_F=\{1,\lan u\ran,\lan\pi\ran,\lan u\pi\ran\}$, where $u$ is a non-square in the residue field.
    If $-1\notin {F^\times}^2$, we may take $u=-1$, and $\pi+1=r^2$ for some $r$ (by Hensel's lemma). Consider now the long exact sequence \eqref{long-eseq} and the element $[-1]\otimes\partial_3((\pmb{\infty},\pmb{0},\pmb{1},\pmb{\pi+1}))$, which represents an element of $H_1(\SL_2(F),Z_2)$. This element maps to $\Lan\pi+1\Ran\Lan-\pi\Ran\otimes(-1)\in\RR_F\otimes\mu_2(F)$, which is zero. Hence there exists an element $l(\pi)\in H_2(\SL_2(F),Z_1)$ which is represented in the bar resolution by
    \[
        l(\pi)=R_{\pi}\otimes\partial_2(X_{-\pi^{-1}})+H_{\pi}\otimes\partial_2(X_1)
    \]
    where
    \[
        R_{\pi}=[g_1^{-1}|-1]-[-1|g_1^{-1}]-[h_1^{-1}r|-1]+[-1|h_1^{-1}r],\quad H_{\pi}=[r|-1]-[-1|r].
    \]
    where $h_{a}=\mtxx{1}{a^{-1}}{0}{1}$ and $g_a=\mtxx{0}{1}{-1}{a}$ for $a\in F^{\times}$. We shall show that $d^1_{2,2}(l(\pi))=-1\wedge\pi\in H_2(T(F),\z)\simeq H_2(\SL_2(F),X_1)$ in the spectral sequence $E^1_{p,q}$. This image is represented by
    \[
        (g_{-\pi^{-1}}^{-1}-h_{-\pi}^{-1}+1)R_{\pi}+(g_1^{-1}-h_1^{-1}+1)H_{\pi})\otimes(\pmb{\infty},\pmb{0})
    \]
    This is an element of $H_2(\SL_2(F),X_1)\simeq H_2(T(F),\z)$. We shall use the section trick; to this end, we pass the element above to the standard resolution: 
    \begin{align*}
        (g_{-\pi^{-1}}^{-1}-h_{-\pi}^{-1}+1)&((-1,g_1^{-1},-g_1^{-1})-(1,-1,-g_1^{-1})-(1,h_1^{-1}r,-h_1^{-1}r)+(1,-1,-h_1^{-1}r)))\otimes(\pmb{\infty},\pmb{0})\\
        +(g_1^{-1}-h_1^{-1}+1)&((1,r,-r)-(1,-1,-r))\otimes(\pmb{\infty},\pmb{0})
    \end{align*}
    Take a section $s:T(F)\backslash\SL_2(F)\to \SL_2(F)$ of the canonical projection $p:\SL_2(F)\to T(F)\backslash\SL_2(F)$, given by
    \[
        s(T(F)\mtxx{a}{b}{c}{d}):=\begin{cases}
            \mtxx{1}{ba^{-1}}{ca}{da}, & a\neq 0,\\
            \mtxx{0}{1}{-1}{bd}, & a = 0
        \end{cases}
    \]
    We define a map $g\mapsto\overline{g}:=g(s\circ p(g))^{-1}$, which allows us to pass from the standard $\SL_2(F)$-resolution to a standard $T(F)$-resolution. Applying this map and simplifying, we obtain
    \begin{align*}
        ((-\pi^{-1},-(\pi^{-1}+1),\pi^{-1}+1)&-(-\pi^{-1},\pi^{-1},\pi^{-1}+1)\\
        -(1,\pi^{-1}+1,-(\pi^{-1}+1))&+(1,-1,-(\pi^{-1}+1)))\otimes(\pmb{\infty},\pmb{0})
    \end{align*}
    which, on passing to the bar resolution, becomes $([1+\pi|-1]-[-1|1+\pi]-[\pi^{-1}+1|-1]+[-1|\pi^{-1}+1])\otimes(\pmb{\infty},\pmb{0})$; this represents the element $(1+\pi)\wedge(-1)-(\pi^{-1}+1)\wedge(-1)=\pi\wedge(-1)$ in $H_2(\SL_2(F),X_1)\simeq H_2(T(F),\z)\simeq\bigwedge^2F^{\times}$.\\

    Finally, Remark \ref{lf-classes} shows that $a\wedge b=n\{\pi\wedge(-1)\}+2K$ for any $a\wedge b\in\bigwedge^2 F^{\times}$, with $K\in\bigwedge^2F^{\times}$, and it is straightforward to check that the element $([u|v]-[v|u])\otimes\{(\pmb{\infty},\pmb{0})+(\pmb{0},\pmb{\infty})\}$, which represents an element of $E^1_{2,2}$, maps to $2(u\wedge v)$. This completes the proof.
\end{proof} 
\begin{rem}
    \label{rem-d211-gen}
    More generally, if $A$ is a universal $\GE_2$-ring, consider the long exact sequence \eqref{long-eseq}. For $a\in\WW_A$, the cycle
    \[
        p_{-1}^+[-1]\otimes\partial_3(\pmb{\infty},\pmb{0},\pmb{1},\pmb{a})+\Lan1-a\Ran[-1]\otimes\partial_3((\pmb{\infty},\pmb{0},\pmb{1},\pmb{a})+(\pmb{0},\pmb{\infty},\pmb{1},\pmb{a}))
    \]
    where $p_{-1}^+=\lan -1\ran+1$, represents an element $\tilde{g}(a)$ which maps to zero in $H_1(\SL_2(A),X_2)\simeq\RR_A\otimes\mu_2(A)$. Hence there exists an element
    \begin{align*}
        g(a)=&([g_1^{-1}|-1]-[-1|g_1^{-1}]-[1-a|-1]+[-1|1-a])\otimes X_{(1-a)^{-1}}\\
        &-([h_1^{-1}|-1]-[-1|h_1^{-1}]-[1-a|-1]+[-1|1-a])\otimes X_{a(1-a^{-1})}\\
        &+([g_{-1}^{-1}|-1]-[-1|g_{-1}^{-1}]-[w|-1]+[-1|w])\otimes X_{-(1-a)^{-1}}\\
        &-([h_{-1}^{-1}|-1]-[-1|h_{-1}^{-1}]-[wa|-1]+[-1|wa])\otimes X_{-a(1-a)^{-1}}\\
        &-([wa|-1]-[-1|wa])\otimes X_{-a}+([w|-1]-[-1|w])\otimes X_{-1}
    \end{align*}
    in $H_2(\SL_2(A),Z_1)$. The same reasoning, together with the section trick used in the proof above (with the same map $s$), shows that $d^1_{2,2}(g(a))=-1\wedge (1-a)$. Setting $a=1-\pi$ recovers the result above, but the argument applies to other universal $\GE_2$-rings $A$ with $|\GG_A|=4$ in which $-1$ is not a square, for example $\z\half$.  
\end{rem}
The lemma above implies that $E^{\infty}_{1,2}=E^2_{1,2}=0$. The spectral sequence yields a filtration
\[
    0\subseteq F_0 = F_1\subseteq F_2 = F_3=H_3(\SL_2(F),\z)
\]
and comparison with the spectral sequence for $\SM_2(F)$ gives the commutative diagram
\[
    \begin{tikzcd}
        0\ar[r] & \mu(F)\ar[r]\ar[d] & \hat{F}_1 \ar[r]\ar[d] & \hat{E}^{\infty}_{1,2}=\lan\overline{\pi\wedge(-1)}\ran\ar[r]\ar[d] & 0\\
        0\ar[r] & \frac{\mu(F)}{\im(d^2_{2,2})}\ar[r] & F_1 \ar[r] & E^{\infty}_{1,2}=0\ar[r,equal] & 0
    \end{tikzcd}.
\]

The upper row is split by $s(\overline{\pi\wedge(-1)})=\Pi_{-1}$, which yields the following commutative diagram. 
\[
    \begin{tikzcd}
        \lan\overline{\pi\wedge(-1)}\ran\ar[r]\ar[d] & H_3(\SM_2(F),\z)\ar[d]\\
        0\ar[r] & H_3(\SL_2(F),\z)
    \end{tikzcd}
\]
Hence the image of $\Pi_{-1}$ under $H_3(\SM_2(F),\z)\to H_3(\SL_2(F),\z)$ is trivial, and the exact sequence \eqref{sm2-sl2-seq} (over the field $F$) yields the exact sequence
\[
    \begin{tikzcd}
        \mu(F)^{\tilde{}}\ar[r] & H_3(\SL_2(F),\z)\ar[r] & \RB(F)\ar[r] & 0.
    \end{tikzcd}
\]

Finally, from the classical Bloch-Wigner exact sequence over the algebraic closure $\pmb{F}$ of $F$, we have the commutative diagram
\[
    \begin{tikzcd}
        &\mu(F)^{\tilde{}}\ar[r]\ar[d,hook] & H_3(\SL_2(F),\z)\ar[r]\ar[d] & \RB(F)\ar[r]\ar[d] & 0\\
        0 \ar[r] & \mu(\pmb{F})^{\tilde{}}\ar[r] & K_3(\pmb{F})\simeq H_3(\SL_2(\pmb{F}),\z)\ar[r] & \BB(\pmb{F})\ar[r] & 0
    \end{tikzcd}.
\]

The injectivity of the left-hand vertical map implies the injectivity of the left-hand map in the upper row, and we finally obtain the following theorem.
\begin{thm}\label{thm-sm2-sl2-local}
    Let $F$ be a non-dyadic local field with $\char(F)\neq 2$ and $-1\notin {F^\times}^2$. There is a refined Bloch-Wigner exact sequence
    \[
        0\to\mu(F)^{\tilde{}}\to H_3(\SL_2(F),\z)\to\RB(F)\to 0.
    \]
    where $\mu(F)^{\tilde{}}$ is the unique non-trivial extension of $\mu(F)$ by $\z/2$.
\end{thm}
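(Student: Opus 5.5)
The argument assembles the pieces established just before the statement. First, Corollary \ref{sm2-sl2-seq-local} gives the exact sequence
\[
H_3(\SM_2(F),\z)\to H_3(\SL_2(F),\z)\to\RB(F)\to 0 .
\]
Its left term is computed in the preceding proposition: $H_3(\SM_2(F),\z)\simeq \z/4N\oplus\z/2$. The cyclic factor of order $4N$ is generated by $\Theta_\zeta+\Gamma$, and the factor $\z/2$ by $\Pi_{-1}$. The plan is to show that the image of $H_3(\SM_2(F),\z)$ in $H_3(\SL_2(F),\z)$ is $\mu(F)^{\sim}\simeq\z/4N$, embedded injectively. There are three steps.

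\emph{Step 1: $\Pi_{-1}$ dies.} By Lemma \ref{d122-surj}, $d^1_{2,2}$ is surjective, so $E^\infty_{1,2}=0$ and $F_0=F_1$. The element $\Pi_{-1}$ lies in $\hat F_1$ and lifts $\overline{\pi\wedge(-1)}\in\hat E^\infty_{1,2}$. Its image in $F_1=F_0=E^\infty_{0,3}$ has to be checked in the diagram comparing the two filtrations. The splitting $s(\overline{\pi\wedge(-1)})=\Pi_{-1}$ makes the image of $\Pi_{-1}$ in $F_1$ factor through $E^\infty_{1,2}=0$. One must make sure the image of $\Pi_{-1}$ actually lands in zero and not merely in $\mu(F)/\im(d^2_{2,2})$. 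I would argue this via the compatibility of the splitting: $\Pi_{-1}$ is built from the wedge class $\pi\wedge(-1)$, which is the image of the element $l(\pi)$ under $d^1_{2,2}$. Hence, after inclusion, $\Pi_{-1}$ becomes a boundary in the $\SL_2$ double complex, and this can be verified with the explicit representative $l(\pi)$.

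\emph{Step 2: the quotient map factors.} The sequence of Corollary \ref{sm2-sl2-seq-local} therefore factors through $H_3(\SM_2(F),\z)/\lan\Pi_{-1}\ran\simeq\z/4N=\mu(F)^{\sim}$. This gives exactness of
\[
\mu(F)^{\sim}\to H_3(\SL_2(F),\z)\to\RB(F)\to 0 .
\]

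\emph{Step 3 (main obstacle): injectivity on the left.} I would compare with the classical Bloch--Wigner sequence of Suslin \eqref{sus-BW-seq} over an algebraic closure $\pmb F$, using the natural maps induced by $\SL_2(F)\subset\SL_2(\pmb F)$ and $\RB(F)\to\BB(\pmb F)$. Naturality of the $\SM_2$-construction and of the spectral sequences gives a commutative ladder. It then suffices to show that the composite $\mu(F)^{\sim}\to H_3(\SL_2(\pmb F),\z)$ is injective.

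Over $\pmb F$, the image of $H_3(\SM_2,\z)$ contains $\tors(\mu,\mu)^{\sim}=\mu(\pmb F)^{\sim}$. The generator $\Theta_\zeta+\Gamma$ maps to the class corresponding to $\zeta$ together with the nontrivial $\z/2$-extension class. Here $\Theta_\zeta$ is the standard cycle for $\zeta$ in $\tors(\mu,\mu)$ via Proposition \ref{H3B}, and $\Gamma$ satisfies $2\Gamma=[-1|-1|-1]$. Since $\mu(F)\hookrightarrow\mu(\pmb F)$, the map of extensions $\mu(F)^{\sim}\to\mu(\pmb F)^{\sim}$ is injective: on the $\z/2$ kernel it is the identity, and it is injective on the quotients. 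The delicate point is identifying the image of the order-$4N$ generator precisely inside Suslin's $\mu(\pmb F)^{\sim}$, compatibly with the nontrivial extension. I would handle this by checking that its $2N$-th multiple, namely $N[-1|-1|-1]$ (up to boundaries), maps to the nonzero element of order $2$ in $K_3^{\ind}(\pmb F)$. Injectivity of the top-left map then follows from the diagram chase.
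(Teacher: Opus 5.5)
Your overall route is the paper's: Corollary \ref{sm2-sl2-seq-local}, then $H_3(\SM_2(F),\z)\simeq\z/4N\oplus\z/2$, then $E^\infty_{1,2}=0$ from Lemma \ref{d122-surj}, then comparison with Suslin's sequence over an algebraic closure. The gap is in Step 1.

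\textbf{Why Step 1 fails.} Knowing $d^1_{2,2}(l(\pi))=\pi\wedge(-1)$ only lets you subtract the total differential of a lift of $l(\pi)$, plus a correction chain in column $1$. This removes the column-$1$ component of the image of $\Pi_{-1}$. What remains is a cycle concentrated in column $0$, and your argument never computes its class in $F_0=E^\infty_{0,3}=\mu(F)/\im(d^2_{2,2})$. So you have only re-derived $E^\infty_{1,2}=0$, i.e.\ $\iota(\Pi_{-1})\in F_0$, where $\iota\colon H_3(\SM_2(F),\z)\to H_3(\SL_2(F),\z)$.

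Since $\Pi_{-1}$ has order $2$ and $F_0$ is a cyclic quotient of $\mu(F)$ with $N$ odd, nothing you say excludes $\iota(\Pi_{-1})=\iota(\tau)$. Here $\tau\in\hat F_0=\mu(F)$ is the element corresponding to $-1$, represented by $[-1|-1|-1]$. Your own Step 3 shows $\iota(\tau)\neq 0$, so this possibility is not harmless. The paper's text at this point invokes the splitting $s(\overline{\pi\wedge(-1)})=\Pi_{-1}$ of the $\SM_2$ row, which by itself also only places $\iota(\Pi_{-1})$ in $F_0$.

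\textbf{How to repair it.} Do not try to kill $\Pi_{-1}$.
\begin{enumerate}
\item From $E^\infty_{1,2}=0$ you get $\iota(\hat F_1)\subseteq F_1=F_0=\iota(\hat F_0)$.
\item $\hat F_0=\lan\Theta_\zeta\ran$ lies inside $\lan\Theta_\zeta+\Gamma\ran$. Indeed $[-1|-1|-1]$ represents $N\Theta_\zeta$, the element of order $2$ in $\hat F_0$. Together with $2\Gamma=[-1|-1|-1]$ in homology, this gives $2(\Theta_\zeta+\Gamma)=(N+2)\Theta_\zeta$, and $\gcd(N+2,2N)=1$.
\item Hence $\im(\iota)=\iota(\lan\Theta_\zeta+\Gamma\ran)$ is a cyclic quotient of $\z/4N$. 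This is all Steps 1--2 actually need.
\end{enumerate}
Once Step 3 shows this image has order $4N$, $\ker(\iota)$ has order $2$. It is generated by $\Pi_{-1}$ or by $\Pi_{-1}+\tau$, whichever dies; you never need to decide which.

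\textbf{Simplifying Step 3.} The same identity removes your ``delicate point''. Suppose $\Theta_\zeta$ (a class coming from the diagonal copy of $\mu(F)$) maps to an element of order $2N$ over the algebraic closure. Then so does $2(\Theta_\zeta+\Gamma)=(N+2)\Theta_\zeta$. Hence the image of $\Theta_\zeta+\Gamma$ has order exactly $4N$, with no separate identification of the extension class required.
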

All the reasoning in this section applies to any universal $\GE_2$-domain $A$ with finite $\mu(A)=2N$ and $\GG_A=\{1,\lan-1\ran,\lan a\ran,\lan-a\ran\}$ where $1-a$ is a unit, for example $\z\half$; From Corollary \ref{sm2-sl2-seq-local} (since $\II_A^2\otimes\mu_2(A)\subseteq\ker(\gamma)$) to the injectivity of the left-hand map (passing to the algebraic closure of the field of fractions $\pmb{\ffrac(A)}$), passing by the structure of $H_3(\SM_2(A),\z)$ (is the same analysis of the spectral sequence)
\[
    H_3(\SM_2(A),\z)\simeq\frac{\z}{(4N)\z}\oplus\frac{\z}{2\z}
\]
and the surjectivity of $d^1_{2,2}$ (see Remark \ref{rem-d211-gen}). We can state
\begin{thm}\label{thm-sm2-sl2-g}
    Let $A$ a universal $\GE_2$-domain with $\char(A)\neq 2$ that satisfies:
    \begin{enumerate}
        \item $\mu(A)$ is finite.
        \item $\GG_A=\{1,\lan-1\ran,\lan a\ran,\lan -a\ran\}$ where $a\in\WW_A$.
    \end{enumerate}
    Then, there exists a refined Bloch-Wigner exact sequence
    \[
        0\to\mu(A)^{\tilde{}}\to H_3(\SL_2(A),\z)\to\RB(A)\to 0.
    \]
\end{thm}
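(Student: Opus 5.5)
The plan is to run the argument of the preceding section for local fields verbatim, replacing $F$ by $A$, $\pi$ by $a$ and $u$ by $-1$. The only new work is checking that the hypotheses used there still hold for the abstract data (1)--(2).

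\textbf{Step 1: the exact sequence \eqref{sm2-sl2-seq}.} I will apply Theorem~\ref{prp-sm2-sl2-dom}.
\begin{enumerate}
\item Condition (1) holds because $A$ is a universal $\GE_2$-ring.
\item Condition (3) holds because $\lan -1\ran\neq 1$ in $\GG_A$.
\item For condition (4), the hypothesis $\GG_A=\{1,\lan-1\ran,\lan a\ran,\lan -a\ran\}$ together with $\Lan x\Ran^2\otimes(-1)=0$ shows that $\II_A^2\otimes\mu_2(A)$ is generated by $\Lan -1\Ran\Lan a\Ran\otimes(-1)$. Its image under $\gamma$ is $\Lan a\Ran\Lan -1\Ran\chi_1=0$ by Lemma~\ref{alpha-gam-chi}(3),(7), exactly as in Corollary~\ref{sm2-sl2-seq-local}.
\item Condition (2), $\SS_2=\SS_3=0$, is not among the stated hypotheses. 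I will treat it as implicitly required, or verify it in the examples: for $\z\half$ it follows from Lemma~\ref{ht-hb-l1} and the lemma on $A_p$. This is the first point I would double-check.
\end{enumerate}
This yields $H_3(\SM_2(A),\z)\to H_3(\SL_2(A),\z)\to\RB(A)\to 0$.

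\textbf{Step 2: the structure of $H_3(\SM_2(A),\z)$.} I will compute it as in the local-field proposition, using the spectral sequence $\hat E^1_{p,q}$.
\begin{enumerate}
\item $\hat E^1_{2,2}=\aa\wedge\aa/(\aa\wedge(-1))$, and $\hat d^1_{2,2}$ is multiplication by $2$. It is injective by Lemma~\ref{ker-wed-2}, so $\hat E^\infty_{2,2}=0$.
\item Since $|\GG_A|=4$, the analogue of Remark~\ref{lf-classes} gives $\aa\wedge\aa=\lan a\wedge(-1)\ran+2(\aa\wedge\aa)$. Hence $\hat E^\infty_{1,2}=\lan\overline{a\wedge(-1)}\ran$.
\item Proposition~\ref{H3B}, with $\tors(\aa,\aa)^{\Sigma_2'}=\mu(A)$ (using that $A$ is a domain and $\mu(A)$ is finite), gives $\hat E^\infty_{0,3}=\mu(A)$.
\item Lemma~\ref{ker-d-alpha} gives $\hat E^\infty_{2,1}=\lan\lan-1\ran\ran$.
\item The explicit cycles $\Pi_{-1}$, $\Gamma$ and $\Theta_\zeta$ and their boundary identities are purely formal bar-complex computations. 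They transfer unchanged and give $H_3(\SM_2(A),\z)\simeq\z/4N\oplus\z/2$, with $\Pi_{-1}$ spanning the $\z/2$.
\end{enumerate}

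\textbf{Step 3: surjectivity of $d^1_{2,2}$.} This is the step I expect to be the main obstacle. By Remark~\ref{rem-d211-gen}, for $b\in\WW_A$ the class $g(b)$ satisfies $d^1_{2,2}(g(b))=-1\wedge(1-b)$, and elements $([u|v]-[v|u])\otimes Y$ hit $2(u\wedge v)$. So it suffices to find $b\in\WW_A$ with $\lan 1-b\ran\in\{\lan a\ran,\lan -a\ran\}$, since then $-1\wedge(1-b)\equiv -1\wedge a$ modulo $2(\aa\wedge\aa)$.
\begin{itemize}
\item I would try $b=a$ and $b=1-a$ first. These give $-1\wedge(1-a)$ and $-1\wedge a$ respectively, and $1-a\in\aa$ by hypothesis, so $b=1-a$ works directly.
\end{itemize}
Then $E^\infty_{1,2}=0$. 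The comparison diagram of filtrations shows that $\Pi_{-1}$ maps to $0$ in $H_3(\SL_2(A),\z)$. Combined with Step 1, this gives an exact sequence $\mu(A)^{\sim}\to H_3(\SL_2(A),\z)\to\RB(A)\to 0$.

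\textbf{Step 4: injectivity on the left.} Finally, I will map to the algebraic closure $\overline{K}$ of $K=\ffrac(A)$. The classical Bloch--Wigner sequence gives $0\to\mu(\overline K)^\sim\to H_3(\SL_2(\overline K),\z)$. The map $\mu(A)^\sim\to\mu(\overline K)^\sim$ is injective, because $\mu(A)^\sim$ is cyclic of order $4N$ and maps to the cyclic group of order $4N$ in $\mu(\overline K)^\sim$. Compatibility of the square then forces $\mu(A)^\sim\to H_3(\SL_2(A),\z)$ to be injective, which finishes the proof.
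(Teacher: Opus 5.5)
Your proposal follows the paper's own proof. The paper also transports the local-field section, in four steps:
\begin{enumerate}
\item the exact sequence comes from Theorem~\ref{prp-sm2-sl2-dom}, with condition (4) checked via $\gamma(\Lan -1\Ran\Lan a\Ran\otimes(-1))=\Lan a\Ran\Lan -1\Ran\chi_1=0$;
\item the same spectral-sequence analysis gives $H_3(\SM_2(A),\z)\simeq\z/4N\oplus\z/2$;
\item surjectivity of $d^1_{2,2}$ comes from Remark~\ref{rem-d211-gen}, applied to $1-a$ in place of $1-\pi$;
\item injectivity follows by passing to the algebraic closure of $\ffrac(A)$.
\end{enumerate}

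The issue you flag in Step 1 is genuine, and the paper does not resolve it either. In Corollary~\ref{sm2-sl2-seq-local}, condition (2) of Theorem~\ref{prp-sm2-sl2-dom} holds only because $F$ is infinite (Proposition~\ref{ht-hb-local}). The general theorem silently inherits it.

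The listed hypotheses do not imply this condition, since they say nothing about $\SS_n$. For instance, take $A=\z[\phi]$ with $\phi=\frac{1+\sqrt5}{2}$. It has $\GG_A=\{1,\lan-1\ran,\lan\phi\ran,\lan-\phi\ran\}$ and $\phi\in\WW_A$. Yet $H_*(\aa,A)=0$, because $\phi^2-1=\phi\in\aa$. Meanwhile every unit acts trivially on $H_2(A,\z)=\bigwedge^2_{\z}A\simeq\z$, since the determinant is $N(u)^2=1$. Hence $\SS_2\simeq\z$.

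So your argument, like the paper's, proves the theorem under the extra hypothesis $H_i(B(A),\z)\simeq H_i(T(A),\z)$ for $i=2,3$. This does hold for $\z\half$.

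One small correction in Step 3, where $\SS_3=0$ is used a second time. The comparison diagram of filtrations does not show that $\Pi_{-1}$ maps to $0$. It only shows that its image lies in $F_1=F_0$, which is the image of $\hat F_0=\mu(A)$; surjectivity of $\hat F_0\to F_0$ is exactly where $H_3(B(A),\z)\simeq H_3(T(A),\z)$ enters. Moreover $\hat F_0\subset\lan\Theta_\zeta+\Gamma\ran\simeq\mu(A)^{\sim}$, because $2(\Theta_\zeta+\Gamma)$ corresponds to $-\zeta^2$, which generates $\mu(A)$ since $N$ is odd. This weaker statement is all you need for exactness at $H_3(\SL_2(A),\z)$.
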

Applying to the domain $\z\half$ we confirm the results of Coronado and Hutchinson on the domain $\z\half$ (See \cite[\S 8]{C-H2022}).

\end{document}